\documentclass{amsart}
\author{Ofir Gorodetsky}
\dedicatory{To Professor Helmut Maier on his 70th birthday} 
\address{Department of Mathematics, Technion -- Israel Institute of Technology, Haifa, Israel}
\email{ofir.gor@technion.ac.il}
\title{Sharp local estimates for smooth numbers}
\thanks{This project has received funding from the European Research Council (ERC) under the European Union's Horizon 2020 research and innovation programme (grant agreement no.~851318) and from the Israel Science Foundation (grant no.~2088/24). The author is also supported by the Rabbi Dr. Roger Herst Faculty Fellowship and an Alon Fellowship.}
\usepackage[margin=1in]{geometry}
\usepackage{amssymb}
\usepackage{amsfonts}
\usepackage{amsthm}
\usepackage{amsmath}
\usepackage{hyperref}
\usepackage{mathtools}

\theoremstyle{plain}
\newtheorem{thm}{Theorem}[section]
\newtheorem{lem}[thm]{Lemma}  
\newtheorem{prop}[thm]{Proposition}
\newtheorem{cor}[thm]{Corollary}

\theoremstyle{remark}
\newtheorem{rem}{Remark}

\newcommand{\PP}{\mathbb{P}}
\newcommand{\EE}{\mathbb{E}}
\newcommand{\ZZ}{\mathbb{Z}}
\newcommand*\diff{\mathop{}\!\mathrm{d}}
\newcommand{\ubar}{\bar{u}}
\newcommand{\additive}{\Delta}

\begin{document}

\begin{abstract}
	We establish sharp local estimates for smooth numbers. Corollaries are derived, including optimal estimates for square-free smooth numbers.
\end{abstract}

\maketitle

\section{Local estimates}
A positive integer is called $y$-smooth if its prime factors do not exceed $y$. We denote by $S(x,y)$ the set of $y$-smooth numbers up to $x$ and let $\Psi(x,y)=\#S(x,y)$. Throughout, $x \ge y \ge 2$. Let $\alpha=\alpha(x,y)$ be the unique positive solution to $\sum_{p \le y}\log p/(p^{\alpha}-1)=\log x$, which satisfies \cite[Thm.~2]{HildebrandTenenbaum1986}
\begin{equation}\label{eq:alphasize} 	\alpha = \frac{\log(1+y/\log x)}{\log y} \left(1+O\left(\frac{\log \log (2y)}{\log y}\right)\right).
\end{equation}
In this section, we state two new local estimates for $\Psi(x,y)$, which will be proven in \S\ref{sec:technical}. These are estimates for the ratio $\Psi(x/d,y)/\Psi(x,y)$, usually in terms of $\alpha$; see \cite{Hensley1986,Hildebrand1986local,HildebrandTenenbaum1986,Ivic,DLBT20052,DeLa2017} for previous works. As our estimates are quite technical, we begin by stating a usable corollary.
\begin{cor}\label{cor:usable}
For $2 \le d \le y$ and $x \ge y \ge 2$,
\begin{equation}\label{eq:state2more4} \Psi(x/d,y)=\frac{\Psi(x,y)}{d^{\alpha}} \left(1+O\left( \frac{\log d}{\log x} \right)\right).
\end{equation}
\end{cor}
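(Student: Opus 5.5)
We need to prove $\Psi(x/d,y)=\Psi(x,y)d^{-\alpha}(1+O(\log d/\log x))$ for $2\le d\le y$. Since the two technical local estimates of the paper come later, we plan a self-contained route through the saddle-point method of Hildebrand--Tenenbaum. The natural tool is their uniform estimate
\[
\Psi(x,y)=\frac{x^{\alpha}\zeta(\alpha,y)}{\alpha\sqrt{2\pi\phi_2(\alpha,y)}}\Bigl(1+O\Bigl(\frac1u+\frac{\log y}{y}\Bigr)\Bigr),
\]
where $\phi(s,y)=\log\zeta(s,y)$, $\phi_k$ is its $k$-th derivative in $s$, and $u=\log x/\log y$. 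The issue is that this error term is not $O(\log d/\log x)$ when $d$ is small, so we cannot simply take the ratio of two such estimates. Instead we work with the ratio directly.

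\textbf{Step 1: reduction to the Perron integral.} Write both quantities as Perron integrals with the same contour abscissa $\alpha=\alpha(x,y)$:
\[
\Psi(x/d,y)=\frac1{2\pi i}\int_{\alpha-i\infty}^{\alpha+i\infty}\zeta(s,y)\frac{(x/d)^s}{s}\,\diff s .
\]
The integrand differs from that of $\Psi(x,y)$ only by the factor $d^{-s}=d^{-\alpha}d^{-i\tau}$. Hence
\[
\Psi(x/d,y)-d^{-\alpha}\Psi(x,y)=\frac{d^{-\alpha}}{2\pi i}\int \zeta(s,y)\frac{x^s}{s}\bigl(d^{-i\tau}-1\bigr)\,\diff s .
\]

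\textbf{Step 2: the bound we aim for.} We want the right-hand side to be $\ll d^{-\alpha}\Psi(x,y)\log d/\log x$. In the saddle range $|\tau|\le T_0:=\phi_2^{-1/2}(\log\phi_2)$ we expand $d^{-i\tau}-1=-i\tau\log d+O(\tau^2\log^2 d)$. The linear term integrates against the near-Gaussian profile $\exp(-\phi_2\tau^2/2)$. It contributes only through the cubic and odd corrections of the Taylor expansion of $\phi(\alpha+i\tau)$ and of $1/s$. This gives roughly
\[
\log d\,\bigl(\phi_3/\phi_2^{2}+1/(\alpha\phi_2)\bigr)\cdot x^\alpha\zeta(\alpha,y)\phi_2^{-1/2}.
\]
The quadratic term gives $\log^2 d/\phi_2$ times the main term.

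Using the standard sizes $\phi_2\asymp \log x\log y$ and $\phi_3\ll \log x\log^2 y$, both pieces are $\ll \log d/\log x$, together with $\log^2 d/(\log x\log y)\le \log d/\log x$ since $d\le y$. The $1/(\alpha\phi_2)$ term requires $\alpha\gg 1/\log y$ up to an acceptable loss, which holds in the range where $\Psi$ is nontrivial. The tiny-$\alpha$ regime where $y\le \log x$ needs separate care, perhaps by an elementary argument there.

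\textbf{Step 3: the tails (main obstacle).} For $|\tau|>T_0$ we bound the full integrand crudely using Hildebrand--Tenenbaum's bound $|\zeta(\alpha+i\tau,y)/\zeta(\alpha,y)|\le\exp(-c\,u\tau^2/((1-\alpha)^2+\tau^2))$. One must make these tails smaller than $\log d/\log x$ times the main term, not merely $1/u$. The difficulty is uniformity: large $\tau$ up to $T$ and the truncation of Perron's formula, where the effect of the shift by $d$ on the truncation error must be controlled.

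To handle this, I would first try smoothing. Compare $\Psi(x/d,y)$ with $d^{-\alpha}\Psi(x,y)$ by averaging over $x'\in[x/d,x/d\cdot(1+1/\log x)]$, using the known local monotonicity estimate $\Psi(x/t,y)\asymp t^{-\alpha}\Psi(x,y)$ to absorb the smoothing error. This yields an absolutely convergent integral with an extra factor $\min(1,\log x/|\tau|)$, so the tails become easy. Alternatively, the small-$d$ case follows from the $d=p$ case iterated via $\Psi(x,y)=\Psi(x/p,y)+\Psi(x,p^-)$-type identities, but I expect the Perron route with smoothing to be cleaner.
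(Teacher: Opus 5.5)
\textbf{There is a genuine gap: the saddle-point argument only works in an intermediate range of $y$.} Your Steps 1--3 are in substance the paper's proof of Lemma~\ref{lem:ularge}: Perron at abscissa $\alpha$, the factor $d^{-s}-d^{-\alpha}$, and a Taylor expansion giving exactly $\log d\,(1/(\alpha\sigma_2)+\sigma_3/(2\sigma_2^2))$ and $\log^2 d/\sigma_2$, where $\sigma_k=\phi^{(k)}(\alpha,y)$ is your $\phi_k$. But that argument cannot give $O(\log d/\log x)$ uniformly in $x\ge y\ge 2$, and the proposal contains nothing that covers the remaining ranges.

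What the method really yields is Lemma~\ref{lem:ularge}, with the extra error $\additive$ of \eqref{eq:delta}. This is $\ll 1/\log x$ only when roughly $\bar u\ge C(\log\log x)^3$ and $\log y\ge(\log\log x)^{2/3+\varepsilon}$.

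\emph{Small $u$.} Take, say, $y=x^{1/3}$. Then the Gaussian width $\phi_2^{-1/2}\asymp\sqrt u/\log x$ is comparable to $1/\log y=u/\log x$, so the integrand is not concentrated at the saddle point at all. For $|\tau|\ge 1/\log y$ the bound $\exp(-cu\tau^2/((1-\alpha)^2+\tau^2))\ge e^{-cu}$ gives no decay. Your smoothing factor $\min\{1,\log x/|\tau|\}$ only helps for $|\tau|\ge\log x$. So the range $1/\log y\le|\tau|\le\log x$ cannot be bounded by $o(1/\log x)$ times the main term using absolute-value estimates.

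\emph{Very small $y$.} The bound from \cite[Lem.~8]{HildebrandTenenbaum1986} that you quote holds only for $|\tau|\le\exp((\log y)^{3/2-\varepsilon})$; this is the source of the $(\log y)^{3/2-\varepsilon}$ in Lemma~\ref{lem:trunc}. Beyond that range the finite Euler product is almost periodic and returns close to $\zeta(\alpha,y)$. Hence for $\log y$ below about $(\log\log x)^{2/3}$ the tails are again out of reach. The difficulty there is not the central term you flag: for $y\le\log x$ one has $1/(\alpha\sigma_2)\asymp\sigma_3/\sigma_2^2\asymp\alpha/\pi(y)\asymp 1/\log x$, which is acceptable. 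It is the tails, and ``perhaps an elementary argument'' does not address them.

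\textbf{How the paper covers these ranges.} It uses inputs of a different nature.
\begin{itemize}
\item For $y\ge\exp(\sqrt{\log x}\log\log x)$ (Lemma~\ref{lem:smallu}), it divides Saias's expansion \eqref{eq:saias} at $x/d$ by the one at $x$, and uses de Bruijn's \eqref{eq:psidb} when $y\ge x^{1/4}$. These rest on the prime number theorem with a strong error term, not on a saddle point. The $a_1$-terms differ by $O(\delta/(u\log y))$ with $\delta=\log d/\log y$. The ratio $\rho(u-\delta)/\rho(u)$ is evaluated via $r(t)=\xi(t)+O(1/t)$ as $d^{1-\beta}(1+O(\delta/u+\min\{1,\delta^2/u\}))$, and \eqref{eq:betaalpha} converts $\beta$ to $\alpha$.
\item For $\log y<(\log\log x)^{3/4}$, it imports La Bret\`eche--Tenenbaum's estimate \eqref{eq:state2more22}.
\item The saddle-point lemma is used only in between. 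Corollary~\ref{cor:usable} is then read off from Corollary~\ref{cor:app}, using $d\ge 2$ and $\log^2 d/(\log x\log y)\le\log d/\log x$ for $d\le y$.
\end{itemize}
You need inputs of this kind for small $u$ and for very small $y$; smoothing alone will not supply them.

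\textbf{A smaller point on the smoothing step.} The smoothing error cannot be absorbed by $\Psi(x/t,y)\asymp t^{-\alpha}\Psi(x,y)$, which only controls constant factors. What does work is the sandwich $\Psi(X,y)\le\tilde\Psi(X)\le\Psi((1+h)X,y)$ for the averaged count $\tilde\Psi$, with $d\ge 2$ absorbing the resulting factor $(1+h)^{\alpha}=1+O(1/\log x)$.
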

This corollary is a special case of Corollary~\ref{cor:app} proved in \S\ref{sec:overview}. Let $N_{x,y}$ be an integer chosen uniformly at random from $S(x,y)$.  Corollary~\ref{cor:usable} has the following implication:
\begin{equation}\label{eq:imp}
\PP(d \text{ divides }N_{x,y}) =\frac{1}{d^{\alpha}}\left( 1+ O\left(\frac{\log d}{\log x}\right)\right)
\end{equation}
holds for $d \in [1,y]\cap \ZZ$. When $d=2$, say, Corollary~\ref{cor:usable} is already new, except in two extreme scenarios: when $\log y \asymp \log x$ \cite[Thm.~2.4]{DLBT20052} or when $y\ll (\log x \log^2 \log x)^{1/3}$ \cite[Cor.~3.2]{DeLa2017}. 
Let\[\Psi_{\mu^2}(x,y):=\sum_{n \in S(x,y)} \mu^2(n)\] be the number of square-free $y$-smooth integers up to $x$. Let
\[ \zeta(s,y) := \prod_{p \le y}(1-p^{-s})^{-1}.\]
The following corollary is an application of our estimates, and its proof is given in \S\ref{sec:sfree}.
\begin{cor}\label{cor:sfinto}
	Fix $\varepsilon>0$. Uniformly for $x \ge y \ge (\log x)^{2+\varepsilon}$, we have
	\[ 	\frac{\Psi_{\mu^2}(x,y)}{\Psi(x,y)}=\frac{1}{\zeta(2\alpha,y)}\left(1+ O_{\varepsilon}\left(\frac{1}{\log x}\right)\right).\]
\end{cor}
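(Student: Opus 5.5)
We write $\mu^2(n)=\sum_{d^2\mid n}\mu(d)$. Since only $y$-smooth $d$ contribute when $n$ is $y$-smooth, this gives
\[ \Psi_{\mu^2}(x,y)=\sum_{\substack{d \text{ $y$-smooth}\\ d\le \sqrt{x}}}\mu(d)\,\Psi(x/d^2,y). \]
The main term should come from the $d$ for which $d^2$ is small, where Corollary~\ref{cor:usable} (or the more general Corollary~\ref{cor:app}) applies. We split the sum at a parameter $D$, taking the range $d\le D$ to be the one where $d^2\le y$, or more generally where the local estimate holds for $d^2$.

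\textbf{Main range.} For $d\le D$ we use
\[ \Psi(x/d^2,y)=\Psi(x,y)\,d^{-2\alpha}\left(1+O\!\left(\frac{\log d}{\log x}\right)\right). \]
Summing over $d$ gives
\[ \Psi(x,y)\left[\sum_{d\le D,\ d\mid P(y)}\mu(d)d^{-2\alpha}+O\!\left(\frac{1}{\log x}\sum_{d\mid P(y)}\frac{\mu^2(d)\log d}{d^{2\alpha}}\right)\right], \]
where $P(y)=\prod_{p\le y}p$. The full sum $\sum_{d\mid P(y)}\mu(d)d^{-2\alpha}$ equals $1/\zeta(2\alpha,y)$. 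The error sum is
\[ \prod_{p\le y}(1+p^{-2\alpha})\cdot\sum_{p\le y}\frac{\log p}{p^{2\alpha}+1}. \]
This is the point where the hypothesis $y\ge(\log x)^{2+\varepsilon}$ enters. By \eqref{eq:alphasize} it gives $\alpha\ge 1/2+c\varepsilon$ for some $c>0$, so $2\alpha\ge 1+2c\varepsilon$. Hence $\sum_p \log p/p^{2\alpha}\ll_\varepsilon 1$. Also the ratio $\prod_p(1+p^{-2\alpha})\big/\prod_p(1-p^{-2\alpha})^{-1}=\prod_p(1-p^{-4\alpha})\asymp 1$, so $\prod_p(1+p^{-2\alpha})\asymp 1/\zeta(2\alpha,y)$. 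The error is therefore $O_\varepsilon\bigl(1/(\zeta(2\alpha,y)\log x)\bigr)$, which is the claimed relative error.

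\textbf{Tails.} Two tails remain. The first is the completion tail $\sum_{d>D}\mu^2(d)d^{-2\alpha}$. With Rankin's trick this is at most $D^{-\delta}\prod_p(1+p^{-2\alpha+\delta})$ for some $\delta\asymp\varepsilon$, which is still $\ll_\varepsilon D^{-\delta}$ since $2\alpha-\delta>1$. This is $\ll 1/\log x$ once $D\ge(\log x)^{C}$.

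The second tail is $\sum_{d>D}\Psi(x/d^2,y)$. For this we use the standard uniform bound $\Psi(x/m,y)\ll \Psi(x,y)m^{-\alpha}$ (e.g.\ \cite{DLBT20052}, or the upper-bound half of the local estimates proven later in \S\ref{sec:technical}). This bounds the second tail by $\Psi(x,y)\sum_{d>D}\mu^2(d)d^{-2\alpha}$, which is handled by the same Rankin bound.

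\textbf{Main obstacle.} The tails need $D$ of size at least a power of $\log x$, but Corollary~\ref{cor:usable} only covers $d^2\le y$. That is fine when $y\ge(\log x)^{C}$. When $y$ is close to $(\log x)^{2+\varepsilon}$, however, $\sqrt y$ may be smaller than the required $D$. In that regime I would use the more general Corollary~\ref{cor:app} or the technical local estimates of \S\ref{sec:technical}, which presumably allow $d$ beyond $y$ with an error of type $O(\log d/\log x)$, or $O(\log d/(\alpha\log x))$, under a mild condition. If those are not available, the fallback is to choose $D=\sqrt y$. In that case the Rankin tail is $\ll y^{-\delta/2}$, which is $\ll(\log x)^{-(1+\varepsilon/2)\delta}$. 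This is too weak as it stands, so I would sharpen it by taking $\delta$ as large as allowed, namely $\delta$ up to $2\alpha-1-\eta$. This gives the tail $y^{-(\alpha-1/2)+\eta}$. By \eqref{eq:alphasize}, $y^{\alpha}\approx y/\log x$, so
\[ y^{-(\alpha-1/2)}\approx\frac{\sqrt y\,\log x}{y}=\frac{\log x}{\sqrt y}\le(\log x)^{-\varepsilon/2}. \]
Reaching $1/\log x$ from this therefore genuinely requires local estimates for $d$ up to about $(\log x)^{C}$. I expect that verifying this extended range of the local estimate is the crux of the proof.
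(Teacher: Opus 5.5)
Your decomposition via $\mu^2(n)=\sum_{d^2\mid n}\mu(d)$ is the paper's, and your main-term, error-sum and Rankin computations are correct. As written, though, the argument is not finished. Everything hinges on a local estimate for $\Psi(x/d^2,y)$ with $d^2$ well beyond $y$; you correctly show that the fallback $D=\sqrt{y}$ leaves a tail of size about $(\log x)^{-\varepsilon/2}$. You leave this estimate as something that ``presumably'' holds.

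It needs no separate verification: it is Corollary~\ref{cor:simpcor} (equivalently Corollary~\ref{cor:app}), stated uniformly for all $1\le d\le x$. Since $y\ge(\log x)^2$ forces $\max\{1,(\log x)/y\}=1$, applying it with $d^2$ in place of $d$ gives, for $2\le d\le\sqrt{x}$,
\[
\Psi(x/d^2,y)=\frac{\Psi(x,y)}{d^{2\alpha}}\Bigl(1+O\Bigl(\frac{\log d}{\log x}+\frac{\log^2 d}{(\log x)(\log y)}\Bigr)\Bigr).
\]
Note the quadratic term. The pure $O(\log d/\log x)$ error you guessed holds only when $\log d\ll\log y$. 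That is true for $d\le(\log x)^{C/\varepsilon}$, so your split version does go through. In general the quadratic term cannot be dropped; compare the factor $(1-\log^2 d/(2\log^2 x))^{bu}$ in Corollary~\ref{cor:appsmalll}. It is harmless here: $2\alpha\ge 1+c\varepsilon$ gives $\sum_{d\mid P(y)}\mu^2(d)(\log d)^2d^{-2\alpha}\ll_\varepsilon 1$, so its total relative contribution is $O_\varepsilon(1/((\log x)(\log y)))$.

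With this in hand, no cut at $D$ is needed, and this is how the paper argues. Apply the estimate to every $d\in S(\sqrt{x},y)$ with $d\ge 2$; the term $d=1$ is exact. The only tail left is the completion $\sum_{d>\sqrt{x},\,d\mid P(y)}\mu^2(d)d^{-2\alpha}$, which your Rankin bound makes $\ll_\varepsilon x^{-c\varepsilon}$. The uniform upper bound $\Psi(x/m,y)\ll\Psi(x,y)m^{-\alpha}$ is then not needed. The real input, and the reason the error improves from the $1/u_y$ of \eqref{eq:oldmu} to $1/\log x$, is that this local estimate carries no $1/u_y$ term once $d\ge 2$.
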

Corollary~\ref{cor:sfinto} improves on estimates in \cite{Ivic1985,Ivic,Naimi,DLBT20052}. In Proposition \ref{prop:sfree}, we investigate $\Psi_{\mu^2}(x,y)/\Psi(x,y)$ further, obtaining
\begin{cor}\label{cor:sfreerange}
We have 
\begin{equation}\label{eq:Psiratio}
	\frac{\Psi_{\mu^2}(x,y)}{\Psi(x,y)} \sim \frac{1}{\zeta(2\alpha,y)}
\end{equation}
when $y/( (\log x)^{3/2}/\sqrt{\log \log x})\to \infty$. Relation \eqref{eq:Psiratio} does not hold for $y \asymp (\log x)^{3/2}/\sqrt{\log \log x}$.
\end{cor}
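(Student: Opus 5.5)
We write $\mu^2(n)=\sum_{d^2\mid n}\mu(d)$ and swap the order of summation:
\[ \Psi_{\mu^2}(x,y)=\sum_{d\ \text{$y$-smooth}}\mu(d)\Psi(x/d^2,y).\]
We then compare $\Psi(x/d^2,y)$ with $\Psi(x,y)d^{-2\alpha}$. Since $\sum_{d\,y\text{-smooth}}\mu(d)d^{-2\alpha}=1/\zeta(2\alpha,y)$, the question is how the error terms accumulate. The natural model is probabilistic. Under the measure $n\mapsto n^{-\alpha}$ on smooth numbers, the exponent of each $p$ is geometric, and
\[ \PP(p^2\mid N)\approx p^{-2\alpha}.\]
Independence then gives the ratio $\prod_{p\le y}(1-p^{-2\alpha})=1/\zeta(2\alpha,y)$. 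What goes wrong is the second-order correction to the local estimate. The technical local estimates of \S\ref{sec:technical} (and Proposition~\ref{prop:sfree}) should give
\[ \Psi(x/d,y)/\Psi(x,y)=d^{-\alpha}\exp\bigl(-c(\log d)^2/\sigma^2+\dots\bigr),\qquad \sigma^2=\sum_{p\le y}p^\alpha\log^2p/(p^\alpha-1)^2\asymp (\log x)\log y.\]
This is a Gaussian (saddle-point) correction.

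First, I will take the precise form of the estimate from Proposition~\ref{prop:sfree}. I expect it to read
\[ \frac{\Psi_{\mu^2}(x,y)}{\Psi(x,y)}=\frac{1}{\zeta(2\alpha,y)}\exp\bigl(-E(x,y)+o(1)\bigr),\]
where $E$ is the aggregated correction coming from many primes simultaneously. Heuristically, square-freeness forces the total log-mass $\sum_{p^2\mid N}\log p$ to change. Its mean is $\sum_p 2p^{-2\alpha}\log p$, and the induced Gaussian shift costs roughly
\[ E\asymp \frac{\bigl(\sum_{p\le y}p^{-2\alpha}\log p\bigr)^2}{\sigma^2}.\]

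Second, I will evaluate $E$ in the range $y\le (\log x)^{2}$. There $\alpha\approx \log(y/\log x)/\log y$ is small; in fact $y^{\alpha}\approx y/\log x$. Then $p^{-2\alpha}\approx 1$ for most primes only when $\alpha\log y$ is bounded. More precisely,
\[ \sum_{p\le y}p^{-2\alpha}\log p\asymp y^{1-2\alpha}\asymp (\log x)^2/y,\]
and
\[ \sigma^2\asymp \sum_{p\le y}p^{-\alpha}\log^2 p\asymp y^{1-\alpha}\log y\asymp (\log x)\log y.\]
So
\[ E\asymp \frac{(\log x)^4/y^2}{(\log x)\log y}=\frac{(\log x)^3}{y^2\log y}.\]
This tends to zero exactly when $y/\bigl((\log x)^{3/2}/\sqrt{\log\log x}\bigr)\to\infty$, using $\log y\asymp\log\log x$. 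That is the claimed threshold. For $y\ge(\log x)^{2+\varepsilon}$ the result already follows from Corollary~\ref{cor:sfinto}, so only the range $(\log x)^{3/2}\ll y\le(\log x)^{2+\varepsilon}$ needs this analysis.

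Third, for the negative statement I will show that when $y\asymp(\log x)^{3/2}/\sqrt{\log\log x}$, the quantity $E$ stays bounded away from $0$ (and bounded). I also need the $o(1)$ remainder to be genuinely smaller, so that the ratio in \eqref{eq:Psiratio} tends to a constant different from $1$ along suitable sequences.

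The main obstacle is the first step. One must justify that the errors from summing the local estimates over all square-free $d$ (including $d>y$, where $d^2$ can be as large as $x$) really aggregate into $\exp(-E)$, rather than merely into a sum of $O(\log d/\log x)$ terms. I would try to do this by a saddle-point computation on the Dirichlet series. This means writing the generating function $\zeta(s,y)\prod_{p\le y}(1-p^{-2s})$, applying Perron's formula at $s=\alpha+it$, and comparing with the same integral for $\zeta(s,y)$ alone. The ratio $\prod_p(1-p^{-2s})/(1-p^{-2\alpha})$ expands as $\exp\bigl(2it\sum_p p^{-2\alpha}\log p+O(t^2\cdots)\bigr)$. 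Integrating this against the Gaussian $e^{-\sigma^2t^2/2}$ produces the factor $\exp\bigl(-(\sum_p p^{-2\alpha}\log p)^2\cdot 2/\sigma^2\bigr)$. The tails and the $t^2$ terms must be shown to be negligible in this range. The saddle point also shifts, and the correct treatment may be to move the saddle to the solution $\beta$ of the modified equation for the square-free series. The difference between the two saddles is exactly what produces $E$.
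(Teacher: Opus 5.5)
This is essentially the paper's route: Corollary~\ref{cor:sfreerange} is read off from Proposition~\ref{prop:sfree} (with Corollary~\ref{cor:sfinto} for $y>\log^3 x$), whose exponent $A_1\asymp 2f^2\bigl((u\log(u+1))^2/y\bigr)/u$ is, by Lemmas~\ref{lem:phi}--\ref{lem:psum2}, comparable to your $E$, and whose remainder satisfies $B_1\ll(1+f)/u=o(1)$ in the relevant range. The ``main obstacle'' you flag is that proposition itself, which the paper proves not by a new Perron computation but by dividing the known saddle-point formulas \eqref{eq:TenenbaumSfree} and \eqref{eq:psiacc} and Taylor-expanding $g_{\mu^2}$ about $\alpha_{\mu^2}$, so the displacement $\alpha-\alpha_{\mu^2}$ produces $A_1$ exactly as you anticipate; one caution is that locating the threshold up to the factor $\sqrt{\log\log x}$ needs $y^{1-\alpha}\asymp u\log(u+1)$ (e.g.\ via \eqref{eq:betaalpha}), since \eqref{eq:alphasize} alone determines $y^{\alpha}$ only up to powers of $\log y$.
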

Let \[u := \frac{\log x}{\log y} \quad \text{and} \quad \ubar:= \min\left\{u, \frac{y}{\log y}\right\}.\]
We state our two technical estimates.  
\begin{lem}\label{lem:ularge}
	Fix $\varepsilon \in (0,3/2)$.
	Uniformly for $0<d\le x$ and $x \ge y \ge 2$, we have 
	\[ \Psi(x/d,y) = \frac{\Psi(x,y)}{ d^{\alpha} }\bigg( 1+ O_{\varepsilon}\bigg( \frac{|\log d|}{\log x} \left( \frac{\log y}{y} + \frac{1}{\max\{1,(\log x)/y\}}\right)+\frac{\log^2 d}{(\log x)(\log y) \max\{1,(\log x)/y\}}+ \additive\bigg)\bigg)\]
	where 
	\begin{equation}\label{eq:delta}
		\additive:=\sqrt{(\log x )(\log y) \max\{1,(\log x)/y\}} \bigg(\frac{\exp(-c\ubar^{1/3})}{u}  +\alpha \exp(-\min\{ cu\log(2u)^{-2},(\log y)^{3/2-\varepsilon}\})\bigg).
	\end{equation}
	Furthermore, in the same range, $\Psi(x/d,y) \ll_{\varepsilon} \Psi(x,y)d^{-\alpha}(1+\additive)$.
\end{lem}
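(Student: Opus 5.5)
We plan to prove Lemma~\ref{lem:ularge} with the saddle-point method of Hildebrand and Tenenbaum. Write $\phi(s)=\log\zeta(s,y)$ and, for $X\ge 1$, let $\alpha_X=\alpha(X,y)$, so that $-\phi'(\alpha_X)=\log X$. Set $\sigma_2(X)=\phi''(\alpha_X)$. Hildebrand--Tenenbaum \cite[Thm.~1]{HildebrandTenenbaum1986} give
\[\Psi(X,y)=\frac{X^{\alpha_X}\zeta(\alpha_X,y)}{\alpha_X\sqrt{2\pi\sigma_2(X)}}\Big(1+O\Big(\frac1u+\frac{\log y}{y}\Big)\Big).\]
These error terms are too weak here: the error we need is $O(|\log d|/\log x)$-type, and it must vanish as $d\to1$. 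So we will not take the ratio of two such asymptotics. Instead we write the ratio through the Perron integral as
\[\Psi(x/d,y)=\frac{1}{2\pi i}\int_{\alpha-i\infty}^{\alpha+i\infty}\zeta(s,y)\frac{(x/d)^s}{s}\,ds,\]
integrating on the line $\Re s=\alpha=\alpha(x,y)$ rather than at $\alpha_{x/d}$. Pulling out $d^{-\alpha}$ leaves an integrand $\zeta(s,y)x^s s^{-1}d^{-i\tau}$, with $s=\alpha+i\tau$.

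\textbf{Central range.} On $|\tau|\le T_0$, with $T_0$ slightly larger than $\sigma_2^{-1/2}$ (where $\sigma_2=\phi''(\alpha)\asymp(\log x)(\log y)\max\{1,(\log x)/y\}$), we Taylor expand
\[\log\big(\zeta(s,y)x^s\big)=\phi(\alpha)+\alpha\log x-\tfrac12\sigma_2\tau^2-\tfrac{i}{6}\phi'''(\alpha)\tau^3+\dots\]
We also expand $d^{-i\tau}=1-i\tau\log d-\tfrac12\tau^2\log^2 d+\dots$, and likewise $1/s$. Since $\alpha$ is the exact saddle for $x$, no linear term appears. Doing the same computation with $d=1$ gives the main term of $\Psi(x,y)$, and we compare the two Gaussian integrals term by term.
\begin{itemize}
\item The term linear in $\log d$ pairs with odd terms: either the cubic $\phi'''\tau^3$, or the $\tau$ coming from $1/s$ (which carries a factor $1/\alpha$). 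This produces
\[|\log d|\Big(\frac{\phi'''}{\sigma_2^{2}}+\frac{1}{\alpha\sigma_2}\Big).\]
Using $\phi'''\asymp\sigma_2\log y$ and $\alpha\sigma_2\gg\log x\cdot\max\{1,(\log x)/y\}\,/(\ldots)$, this should give the term $\frac{|\log d|}{\log x}\big(\frac{\log y}{y}+\max\{1,(\log x)/y\}^{-1}\big)$.
\item The quadratic term gives $\log^2 d/\sigma_2$, which is exactly the stated second term.
\end{itemize}
This step needs the standard estimates $\phi^{(k)}(\alpha)\asymp_k \sigma_2(\log y)^{k-2}$ from \cite{HildebrandTenenbaum1986}. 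When $\log^2 d\gg\sigma_2$ the bound is trivial, so we may assume $|\tau\log d|\ll1$ in the central range.

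\textbf{Tail range and the term $\additive$.} For $|\tau|>T_0$ we factor out $d^{-\alpha}$ and bound the integrand absolutely, so $d$ plays no role in this range. This is why a single $\additive$ serves for all $d$, including $d<1$. We first need a smoothed Perron formula, or a short averaging in $x$, to make the tail converge absolutely. The tail is then bounded using Hildebrand--Tenenbaum-type estimates for $|\zeta(s,y)/\zeta(\alpha,y)|$:
\begin{itemize}
\item For $T_0<|\tau|\le 1/\log y$: the Gaussian decay bound $\exp(-c\sigma_2\tau^2)$.
\item For $1/\log y<|\tau|\le \exp((\log y)^{3/2-\varepsilon})$: the bound $\exp(-c u\tau^2/((1-\alpha)^2+\tau^2))$, which requires the Vinogradov--Korobov zero-free region. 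This yields the factor $\exp(-cu\log(2u)^{-2})$ and the cap $(\log y)^{3/2-\varepsilon}$.
\item The $\exp(-c\ubar^{1/3})/u$ part arises from the range where $y$ is small relative to $\log x$; there one uses $\ubar$ in place of $u$.
\end{itemize}
After dividing by the main term $\asymp\zeta(\alpha,y)x^\alpha/(\alpha\sqrt{\sigma_2})$, these tail bounds become $\additive$, which explains its prefactor $\sqrt{\sigma_2}$ and the $\alpha$ multiplying the second exponential. The upper bound $\Psi(x/d,y)\ll\Psi(x,y)d^{-\alpha}(1+\additive)$ comes from the same decomposition, bounding the central integral by its absolute value with no expansion.

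\textbf{Main obstacle.} The hard part is the smoothing and removing it: undoing the smoothing must cost only $O(|\log d|/\log x)$ relative error and not $O(1/u)$. To handle this, we would integrate $\Psi$ against a short kernel in $\log x$ of width $\asymp 1/\log x$ relative scale, and use monotonicity of $\Psi(\cdot,y)$ together with the main-term formula itself (applied at nearby $d$) to sandwich the unsmoothed value.
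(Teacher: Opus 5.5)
There is a genuine gap in the term linear in $\log d$ when $y<\log x$. For $y\ge\log x$ your outline works and is essentially the paper's: Perron on $\Re s=\alpha(x,y)$ for $x/d$, subtract $d^{-\alpha}$ times the case $d=1$, Taylor-expand on $|t|\le T_0$, and bound the rest trivially.

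\textbf{The cancellation you miss.} You bound $|\log d|\big(\frac{|\phi'''|}{\sigma_2^2}+\frac{1}{\alpha\sigma_2}\big)$ term by term. For this you use $\phi^{(k)}(\alpha)\asymp\sigma_2(\log y)^{k-2}$ and a lower bound for $\alpha\sigma_2$ that you leave unfinished. The first estimate holds only for $y\gg\log x$.
\begin{itemize}
\item For $y\le\log x$ one has $\alpha\asymp y/(\log x\log y)$ and $\sigma_2\asymp(\log x)^2(\log y)/y$, so $\alpha\sigma_2\asymp\log x$.
\item From $\sigma_k\asymp(-u\log y)^k\ubar^{1-k}$ one gets $|\sigma_3|\asymp\sigma_2(\log x)(\log y)/y$, not $\sigma_2\log y$.
\item Hence $\frac{1}{\alpha\sigma_2}$ and $\frac{|\sigma_3|}{\sigma_2^2}$ are each $\asymp\frac{1}{\log x}$, and separate bounds give only $O(|\log d|/\log x)$.
\end{itemize}
This loses the factor $\frac{\log y}{y}+\frac{y}{\log x}$ that the lemma asserts, which is $o(1)$ when $y\to\infty$ and $y=o(\log x)$. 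The exact coefficient of $-\log d$ is $\frac{1}{\alpha\sigma_2}+\frac{\sigma_3}{2\sigma_2^2}$. The cubic term carries weight $\frac16\cdot 3=\frac12$, not $1$ as in your formula; with weight $1$ no cancellation would occur. In this range $\alpha\log p\ll1$ for $p\le y$. Expanding the defining sums then gives $\sigma_2=\pi(y)\alpha^{-2}(1+O(y/\log x))$ and $-\sigma_3=2\pi(y)\alpha^{-3}(1+O(y/\log x))$. So the two contributions cancel to leading order, and the coefficient is $\ll y/(\log x)^2$.

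\textbf{Higher-order remainder.} With your first-order expansion, the Taylor remainder terms linear in $\log d$ are of size $\frac{|\log d|}{\ubar\sqrt{\sigma_2}}\asymp\frac{|\log d|}{\log x}\sqrt{\frac{\log y}{y}}$ for $y\le\log x$. This is still larger than the target $\frac{|\log d|\log y}{y\log x}$. You must expand $\zeta(s,y)x^s/s$ further: include the products of $1-\frac{it}{\alpha}-\frac{t^2}{\alpha^2}$ with the terms in $\sigma_3,\sigma_4,\sigma_3^2,\sigma_3\sigma_4,\sigma_4^2$, leaving a remainder $O(|t|^5|\sigma_5|+|t|^9|\sigma_3|^3+t^{12}\sigma_4^3+|t|^3\alpha^{-3})$. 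Then use parity so that the surviving odd terms contribute $O\big(\frac{|\log d|\log y}{y\log x}\big)$.

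\textbf{Truncation.} The smoothing step you call the main obstacle is not needed. Lemma~\ref{lem:trunc} is Hildebrand--Tenenbaum's Lemma 10 with $(x/d)^s$ in place of $x^s$, still on $\Re s=\alpha(x,y)$. It truncates at $|t|\le 1/\log y$ with an error which, divided by $d^{-\alpha}\Psi(x,y)$ via \eqref{eq:sad}, is exactly the second summand of $\additive$. As sketched, your smoothing would also fail for real $d$ near $1$. A monotonicity sandwich with a kernel of width $\asymp1/\log x$ costs a relative error $\asymp\alpha/\log x$ independent of $d$. That is not $O(|\log d|/\log x+\additive)$ for real $d$ close to $1$.
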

\begin{lem}\label{lem:smallu}
Uniformly for $1 \le d \le x$ and $x \ge y \ge  \exp(\sqrt{\log x} \log \log x)$, we have
	\begin{equation}\label{eq:state2more2} \Psi(x/d,y)=\frac{\Psi(x,y)}{d^{\alpha}} \left(1+O\left( \frac{1+\log d}{\log x} + \min\left\{1, \frac{\log^2 d}{(\log x)(\log y)}\right\}\right)\right).
	\end{equation}
\end{lem}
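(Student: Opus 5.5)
We prove Lemma~\ref{lem:smallu} by working in the range $y \ge \exp(\sqrt{\log x}\log\log x)$. Here $u \le \sqrt{\log x}/\log\log x$, so $u$ is small compared with $\log y$. In this range the Hildebrand--Tenenbaum saddle point formula and Hildebrand's asymptotic $\Psi(x,y)=x\rho(u)(1+O(\log(u+1)/\log y))$ are both available with good error terms.

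\textbf{Step 1 (reduction to the saddle point).} Write $x' = x/d$ and $u'=\log x'/\log y$. By the Hildebrand--Tenenbaum formula,
\[
\Psi(x,y) = \frac{x^{\alpha}\zeta(\alpha,y)}{\alpha\sqrt{2\pi\sigma_2(\alpha)}}\Big(1+O\big(\tfrac{1}{u}+\tfrac{\log y}{y}\big)\Big), \qquad \sigma_2(\alpha)=\frac{\partial^2}{\partial s^2}\log\zeta(s,y)\Big|_{s=\alpha}.
\]
Using this directly would lose a factor $1/u$, which is too large. We therefore instead exploit the smooth dependence on $\log x$. Put $\phi(t)=\log\Psi(e^t,y)$. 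The main term in the saddle point formula is
\[
F(t)=\min_{s>0}\bigl(st+\log\zeta(s,y)\bigr)-\log\alpha(t)-\tfrac12\log\bigl(2\pi\sigma_2(\alpha(t))\bigr).
\]
Its derivative is $F'(t)=\alpha(t)+O(\alpha'(t)/\alpha+\cdots)$, where $\alpha'(t)=-1/\sigma_2(\alpha(t))$ and $\sigma_2\asymp (\log x)(\log y)$ in this range. Consequently
\[
F(\log x)-F(\log x-\log d) = \alpha\log d + O\bigl(\log^2 d/\sigma_2\bigr) + O\bigl(\log d/\log x\bigr).
\]
The second-order term comes from $\int(\alpha(t)-\alpha)\,dt$, and the last from differentiating the $\log\alpha$ and $\log\sigma_2$ corrections.

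\textbf{Step 2 (the error term in $t$).} The remaining difficulty is that the $O(1/u)$ error in the saddle point formula has to cancel between $x$ and $x/d$. My plan here is to avoid the saddle point formula for the ratio and use the de Bruijn--Hildebrand representation
\[
\Psi(x,y)=x\rho(u)\Big(1+O_\varepsilon\big(\tfrac{\log(u+1)}{\log y}\big)\Big)
\]
in the more precise form $\Psi(x,y)=\Lambda(x,y)\bigl(1+O(\exp(-(\log y)^{3/5-\varepsilon}))\bigr)$, which is valid for $y\ge \exp((\log\log x)^{5/3+\varepsilon})$ and hence in our range. Here $\Lambda(x,y)=x\int \rho(u-v)\,d(\lfloor y^v\rfloor/y^v)$. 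The ratio $\Lambda(x/d,y)/\Lambda(x,y)$ is a smooth function of $u$. De Bruijn's expansion $\Lambda(x,y)=x\rho(u)\bigl(1+\sum_k a_k(u)/\log^k y\bigr)$ has coefficients with controlled $u$-derivatives, namely $a_1'(u)\ll 1$. Hence
\[
\frac{\Lambda(x/d,y)}{\Lambda(x,y)}=\frac{\rho(u-\delta)}{d\,\rho(u)}\bigl(1+O(\delta/\log y)\bigr), \qquad \delta=\frac{\log d}{\log y},
\]
and $\delta/\log y = \log d/\log x\cdot u/\log y$, which is acceptable. For $\rho$ we use $-\rho'(v)/\rho(v)=\xi(v)+O(1/v)$ together with $\xi'(v)\asymp 1/v$. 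This gives
\[
\log\frac{\rho(u-\delta)}{\rho(u)}=\xi(u)\delta+O\Big(\frac{\delta}{u}+\frac{\delta^2}{u}\Big)
\]
for $\delta\le u/2$, and $1-\xi(u)/\log y=\alpha+O(\cdot)$ by the known relation between $\alpha$ and $\xi(u)$. One then checks that $\delta/u=\log d/\log x$ and $\delta^2/u=\log^2 d/((\log x)(\log y))$, which are exactly the claimed error terms.

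\textbf{Main obstacle.} The main obstacle is matching $1-\xi(u)/\log y$ with $\alpha$ to sufficient accuracy. We need $(1-\alpha)\log y=\xi(u)+O(1/u)$, since multiplying by $\log d$ then gives an error $O(\log d/\log x)$. I would establish this by comparing the defining equation of $\alpha$ with the definition $e^{\xi}=1+u\xi$, using the prime number theorem with classical error term, and I expect it to hold in this range. Finally, the case $\delta>u/2$ (that is, $d>\sqrt{x}$) is handled separately. There the $\min\{1,\cdot\}$ term equals $1$, so only an upper and lower bound of order $\Psi(x,y)d^{-\alpha}$ is needed, and this follows from Lemma~\ref{lem:ularge} together with the monotonicity of $\Psi$.
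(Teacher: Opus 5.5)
Your Step~2 follows the paper's route. The paper applies Saias' expansion $\Psi(x,y)=x\rho(u)\bigl(1+a_1\rho'(u)/(\rho(u)\log y)+O(\log^2(u+1)/\log^2 y)\bigr)$ at $x$ and at $x/d$, controls $\log(\rho(u-\delta)/\rho(u))$ through $r=\xi+O(1/t)$ and $\xi'\asymp 1/t$, and passes from $\beta=1-\xi(u)/\log y$ to $\alpha$. The relation you call the main obstacle is available off the shelf, in stronger form, as \eqref{eq:betaalpha}. Step~1 plays no role.

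\textbf{Main gap: the case $d>\sqrt x$.} Since $\max\{1,(\log x)/y\}\ge 1$ and $\ubar\le u$, you always have
\[
\additive\ \ge\ \sqrt{(\log x)(\log y)}\,\frac{e^{-cu^{1/3}}}{u}\ =\ (\log x)\,u^{-3/2}e^{-cu^{1/3}}.
\]
This is $\gg 1$ whenever $u\le c(\log\log x)^3$, and at least of order $\log x$ when $u$ is bounded. So the last part of Lemma~\ref{lem:ularge} gives nothing useful for $y\ge \exp(c\log x/(\log\log x)^3)$. That is exactly the region for which the paper needs Lemma~\ref{lem:smallu} in the first place (see the proof of Corollary~\ref{cor:simpcor}).

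Monotonicity does not rescue this. Bounding $\Psi(x/d,y)\le \Psi(\sqrt x,y)$ only yields decay $x^{-\alpha/2}$, not $d^{-\alpha}$.

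Since the error term is $\gg 1$ here, only the upper bound $\Psi(x/d,y)\ll \Psi(x,y)d^{-\alpha}$ is required. A matching lower bound is in fact false for $d$ near $x$ and large $u$: $\Psi(1,y)=1$, while $\Psi(x,y)x^{-\alpha}\asymp \rho(u)x^{1-\alpha}\asymp\rho(u)e^{u\xi(u)}=e^{(1+o(1))u}$.

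The paper gets the upper bound from three facts: $\Psi(x,y)\asymp x\rho(u)$; the Hildebrand--Tenenbaum inequality $\rho(u-\delta)/\rho(u)\ll e^{\delta\xi(u)}$ for $0\le \delta\le u$; and $(1-\alpha)\log y=\xi(u)+O(1/u)$. Together these give $\rho(u-\delta)/\rho(u)\ll d^{1-\alpha}$.

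\textbf{Second failing step: the sign of the quadratic term.} Your estimate $\log(\rho(u-\delta)/\rho(u))=\xi(u)\delta+O(\delta/u+\delta^2/u)$ is two-sided. For $\delta\le u/2$ the quantity $\delta^2/u$ can be as large as $u/4$, and then $\exp(O(\delta^2/u))$ is not $1+O(\min\{1,\delta^2/u\})$. You must keep the sign. Since $\xi$ is increasing with $\xi'\asymp 1/t$, you have $\int_{u-\delta}^{u}(\xi(t)-\xi(u))\,dt=-\Theta(\delta^2/u)\le 0$. The factor is therefore $\exp(-\Theta(\delta^2)/u)=1+O(\min\{1,\delta^2/u\})$, which is how the paper concludes.

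\textbf{Minor: bounded $u$.} Your bound $a_1'(u)\ll 1$ fails across $u=1$, where $\rho'$ (and hence $a_1(u)\propto \rho'(u)/\rho(u)$) jumps. Also, the second-order expansion is only used for $u\ge 3$. For bounded $u$ (the paper takes $y\ge x^{1/4}$), simply compare de Bruijn's $\Psi(x,y)=x\rho(u)\bigl(1+O(1/\log y+\mathbf{1}_{y>x}/x)\bigr)$ at $x$ and $x/d$. Its error is already $O(1/\log x)$.
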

See Remark \ref{rem:smallu} for an improvement on Lemma \ref{lem:smallu}.

The rest of the section consists of applications of Lemmas~\ref{lem:ularge}--\ref{lem:smallu}. Throughout the paper, $C$ and $c$ denote absolute positive constants that may change from one occurrence to another.
\subsection{Consequences}\label{sec:overview}
\begin{cor}\label{cor:simpcor}
	Fix $\varepsilon >0$. Uniformly for $1 \le d \le x$ and $x \ge y \ge \exp((\log \log x)^{2/3+\varepsilon})$, we have
\[\Psi(x/d,y)	=	\frac{\Psi(x,y)}{d^{\alpha}} \bigg(1+O_{\varepsilon}\bigg( \frac{1+\log d}{\log x}\left( \frac{\log y}{y} + \frac{1}{\max\{1,(\log x)/y\}}\right) + \frac{\log^2 d}{(\log x)( \log y) \max\{1,(\log x)/y\}}\bigg) \bigg).\]
\end{cor}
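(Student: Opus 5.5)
The plan is to deduce Corollary~\ref{cor:simpcor} directly from Lemma~\ref{lem:ularge}, with Lemma~\ref{lem:smallu} as backup for very large $y$. Write $E$ for the main error term in the corollary. Lemma~\ref{lem:ularge} gives the same shape with $|\log d|$ in place of $1+\log d$, plus the extra term $\additive$. Since $d\ge 1$, we have $|\log d|=\log d\le 1+\log d$. So the whole task is to show $\additive \ll_\varepsilon E$ in the range $y\ge \exp((\log\log x)^{2/3+\varepsilon})$. Even at $d=1$, $E$ is at least
\[ E_0:=\frac{1}{\log x}\left(\frac{\log y}{y}+\frac{1}{\max\{1,(\log x)/y\}}\right)\ge \frac{1}{\log x}\cdot\frac{\min\{y,\log x\}}{\log x}\cdot\frac{1}{1}\gg \frac{\min\{1, y/\log x\}}{\log x}, \]
and it suffices to prove $\additive\ll_\varepsilon E_0$. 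Here $\additive$ is multiplied by $\sqrt{(\log x)(\log y)\max\{1,(\log x)/y\}}$, which is at most a power of $\log x$. So we need the exponential factors to be smaller than an arbitrary fixed power of $\log x$ (say $(\log x)^{-3}$).

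Next we split into cases according to $\ubar$ and $u$.

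\emph{Case $u$ large.} If $u\ge (\log y)^{A}$ for a suitable constant $A$, then $\ubar=\min\{u,y/\log y\}$. Since $y\ge \exp((\log\log x)^{2/3+\varepsilon})$, both $u$ and $y/\log y$ are large powers of $\log\log x$ or more. The first exponential then satisfies
\[ \exp(-c\ubar^{1/3})\le \exp(-c\,\min\{u,y/\log y\}^{1/3}). \]
If $y\ge (\log x)^{2}$ this is tiny, because $\ubar\gg \log x/\log y$ or $\ubar\ge y/\log y$. If $y$ is smaller, then $y/\log y\ge \exp((\log\log x)^{2/3+\varepsilon/2})$, which beats any power of $\log\log x$. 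This is only just enough, and the exponent needs care: $\exp(-c(y/\log y)^{1/3})$ must beat $(\log x)^{-3}$. That requires $y^{1/3}\gg \log\log x$ up to logs, which holds since $y\ge\exp((\log\log x)^{2/3})$.

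For the second term, $\min\{cu\log(2u)^{-2},(\log y)^{3/2-\varepsilon'}\}$ must exceed $C\log\log x$. This is exactly where the hypothesis is used. We have $(\log y)^{3/2-\varepsilon'}\ge (\log\log x)^{(2/3+\varepsilon)(3/2-\varepsilon')}$, and choosing $\varepsilon'$ small in terms of $\varepsilon$ makes this at least $(\log\log x)^{1+\delta}$, which dominates $C\log\log x$. The other entry, $cu/\log^2(2u)$, is large whenever $u\ge(\log\log x)^{2}$, say.

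\emph{Case $u$ small.} If $u\le(\log\log x)^{2}$, then $\log y\ge \log x/(\log\log x)^2$. Here $\exp(-cu/\log^2 u)$ no longer decays, so Lemma~\ref{lem:ularge} is insufficient and we switch to Lemma~\ref{lem:smallu}, which applies because $y\ge\exp(\sqrt{\log x}\log\log x)$. In this range $y\ge\log x$, so the target error is
\[ \frac{1+\log d}{\log x}\left(\frac{\log y}{y}+1\right)+\frac{\log^2 d}{(\log x)(\log y)}, \]
which dominates the error in \eqref{eq:state2more2}, since the $\min\{1,\cdot\}$ only helps. This case is therefore immediate. More generally, Lemma~\ref{lem:smallu} covers everything with $\log y\ge \sqrt{\log x}\log\log x$. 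It therefore remains only to use Lemma~\ref{lem:ularge} for $u\ge \sqrt{\log x}/\log\log x$, where $u/\log^2(2u)\gg (\log x)^{1/2-o(1)}$ and that part of $\additive$ is harmless.

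The main obstacle is bookkeeping the first exponential, $\exp(-c\ubar^{1/3})/u$, when $\ubar=y/\log y$ with $y$ near the lower limit. We must check that $(y/\log y)^{1/3}\gg \log\log x$ with room to spare. It does, since $y\ge\exp((\log\log x)^{2/3})$ makes $y$ superpolynomial in $\log\log x$. So the binding constraint is really the $(\log y)^{3/2-\varepsilon}$ term, and that explains the exponent $2/3$.
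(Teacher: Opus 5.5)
Your closing version of the argument is correct and is essentially the paper's proof, which just places the split at $\log y=c\log x/(\log\log x)^3$ instead: you use Lemma~\ref{lem:smallu} for $y\ge\exp(\sqrt{\log x}\log\log x)$, where its error is visibly dominated since $y\ge\log x$ there, and Lemma~\ref{lem:ularge} below that, where $\additive\ll_\varepsilon(\log x)^{-2}$ is absorbed by the $d=1$ part of the error and the entry $(\log y)^{3/2-\varepsilon'}$ is the binding constraint forcing the exponent $2/3$. Do discard your intermediate threshold $u\ge(\log\log x)^2$ for Lemma~\ref{lem:ularge}, however: there $\ubar=u$ and the term $\sqrt{(\log x)(\log y)}\,e^{-c\ubar^{1/3}}/u\asymp(\log x)u^{-3/2}e^{-cu^{1/3}}$ is $\ll 1/\log x$ only once $u\gg(\log\log x)^3$, which is precisely why the paper's cut sits at $(\log\log x)^3$.
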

\begin{proof}
	This is in Lemma \ref{lem:smallu} if $x \ge y \ge \exp(c\log x/(\log \log x)^3)$ and in Lemma \ref{lem:ularge} otherwise.
\end{proof}
We review previous works and use them to further improve Corollary \ref{cor:simpcor}. Theorem 2.4(i) of La Bret\`eche and Tenenbaum \cite{DLBT20052} applied with $m=1$ says $\Psi(x/d,y) \ll \Psi(x,y)/d^{\alpha}$ holds for $1 \le d \le x$ and $2 \le y \le x$. This bound also follows from the last part of Lemma \ref{lem:ularge} (if $y \le \exp(c\log x/(\log \log x)^3)$) and Lemma \ref{lem:smallu} (for larger $y$). Set 
\[	u_y :=  \ubar+ \frac{\log y}{\log(u+2)}. \]
Theorem 2.4(ii) of La Bret\`eche and Tenenbaum \cite{DLBT20052} applied with $m=1$ says that, uniformly for $x \ge y \ge 2$ and $1 \le d \le x$,
\begin{align}\label{eq:state}
	\Psi(x/d,y)&=\frac{\Psi(x,y)}{d^{\alpha}} \left(1+O\left(\frac{1}{u_y}+\frac{\log d}{\log x}\right)\right) \left(1-\frac{\log^2 d/\log^2 y}{u^2+\ubar^2}\right)^{b\ubar}\\
	\label{eq:state2}	&=\frac{\Psi(x,y)}{d^{\alpha}} \left(1+O\left(\frac{1}{u_y}+\frac{\log d}{\log x} + \frac{\log^2 d}{(\log x)( \log y) \max\{1,(\log x)/y\}}\right)\right)
\end{align}
Here $b=b(x,y,d)$ is an unspecified function taking values in $[c,C]$. The equality in \eqref{eq:state2} follows from \eqref{eq:state} and the inequality $0 \le 1-(1-a)^b  \ll ab$ for $0 \le a < 1$ and $b\gg 1$.\footnote{If $b\ge 1$ and $0 \le a <1$, $1-(1-a)^b\le ab$ is Bernoulli's inequality. If $0<b\le 1$ and $0\le a<1$, we have $1-(1-a)^b \le a$ by monotonicity.} The following corollary sharpens \eqref{eq:state} when $x \ge y \ge \log x$.
\begin{cor}\label{cor:appsmalll}
	Uniformly for $1 \le d \le x$ and $x \ge y \ge \log x$, we have
\[\Psi(x/d,y)=\frac{\Psi(x,y)}{d^{\alpha}} \left(1+O\left( \frac{1+\log d}{\log x}\right)\right)\left(1-\frac{\log^2 d}{2\log^2 x}\right)^{bu}.\]
\end{cor}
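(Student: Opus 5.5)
We derive Corollary~\ref{cor:appsmalll} by combining Lemmas~\ref{lem:ularge}--\ref{lem:smallu} (in the form of Corollary~\ref{cor:simpcor}) with the La Bret\`eche--Tenenbaum estimate \eqref{eq:state}, splitting according to the size of $\log d$. Since $y \ge \log x$, we have $\max\{1,(\log x)/y\}=1$. Also $\ubar=\min\{u,y/\log y\}\asymp u$, because $y/\log y \ge (\log x)/\log y\cdot(1+o(1))$ up to constants. Hence $u^2+\ubar^2\asymp u^2$, and the factor in \eqref{eq:state} reads $(1-(\log^2 d)/(\log^2 y)/(u^2+\ubar^2))^{b\ubar}$. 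After rescaling $b$ within $[c,C]$, this should be rewritten as $(1-\log^2 d/(2\log^2 x))^{bu}$. The point is that for $a\le 1/2$ one has $(1-\kappa a)^{b}=(1-a)^{b'}$ with $b'\asymp b$ whenever $\kappa\asymp 1$, since $\log(1-\kappa a)\asymp \log(1-a)\asymp -a$. Near $a\to 1$ (that is, $d$ close to $x$) more care is needed; see below.

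\textbf{Case 1: $\log d \le \sqrt{(\log x)(\log y)}$, or $d$ small.} Here we use Corollary~\ref{cor:simpcor}, valid when $y\ge \exp((\log\log x)^{2/3+\varepsilon})$, which holds since $y\ge\log x$. With $\max\{\cdot\}=1$ its error term is
\[
\frac{1+\log d}{\log x}\Bigl(\frac{\log y}{y}+1\Bigr)+\frac{\log^2 d}{(\log x)(\log y)} \ll \frac{1+\log d}{\log x}+\frac{u\log^2 d}{\log^2 x}.
\]
The last term is $\asymp u\cdot \log^2 d/(2\log^2 x)$. When $u\log^2 d/\log^2 x\le 1$, we have $(1-\log^2 d/(2\log^2 x))^{bu}=\exp(-\Theta(u\log^2 d/\log^2 x))=1+O(u\log^2 d/\log^2 x)$. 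So in this range the stated formula, with some admissible $b$ (even any fixed $b\in[c,C]$), agrees with Corollary~\ref{cor:simpcor}. The right move is to absorb the error into the factor by choosing $b$ appropriately. To make this rigorous, take the true ratio $R=\Psi(x/d,y)d^\alpha/\Psi(x,y)$ and write $R=(1+O((1+\log d)/\log x))\,\exp(-\beta u\log^2 d/\log^2 x)$. Showing $\beta\in[c,C]$ requires two-sided information: an upper bound on $R$ from Corollary~\ref{cor:simpcor}, and a lower bound with the right exponent.

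\textbf{Case 2: $u\log^2 d/\log^2 x \gg 1$.} Here we appeal to \eqref{eq:state}. Its error $1/u_y+\log d/\log x$ must be controlled. Since $u_y\ge \ubar\asymp u$, we get $1/u_y\ll 1/u$. In this range $\log d\gg \log x/\sqrt u$, so $1/u \ll (\log d/\log x)^2 \le \log d/\log x$. Hence the error is $O(\log d/\log x)$, and the factor converts as described above. The conversion for $a=\log^2 d/(\log^2 y(u^2+\ubar^2))$ close to $1$, versus $\log^2 d/(2\log^2 x)\le 1/2$, is the delicate point. If $\ubar=u$ exactly, then $a=\log^2 d/(2\log^2 x)$ and nothing needs to be done. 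When $\ubar<u$ (that is, $y/\log y<u$, so $y\asymp \log x\cdot$ small factors), $a$ can approach $1$ as $d\to x$. There $(1-a)^{b\ubar}$ is much smaller than $(1/2)^{bu}$, which may be incompatible. I would handle $d$ near $x$ separately via direct bounds: for $\log(x/d)$ small, $\Psi(x/d,y)$ is tiny, and both sides are $\exp(-\Theta(u))$ times $\Psi(x,y)/d^\alpha$. Alternatively, I would verify that $y\ge\log x$ forces $\ubar\ge u/2$-type comparability, giving $a\le$ a constant $<1$.

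The main obstacle is the interpolation between the two ranges, i.e.\ ensuring a single $b\in[c,C]$ works uniformly. This needs the lower bound in Case 1, which requires more than the one-sided $O$-term. I would first try to take the lower bound from \eqref{eq:state} itself, whose factor gives $\exp(-O(u\log^2 d/\log^2 x))$ uniformly. Combining it with the upper bound of Corollary~\ref{cor:simpcor}, which gives $\exp(-c u\log^2 d/\log^2 x)$ once its error term is rewritten, should pin $\beta\in[c,C]$.
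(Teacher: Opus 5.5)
\textbf{There is a genuine gap, in the range $y<d\le \exp(\sqrt{(\log x)(\log y)})$.} You use the right ingredients, Corollary~\ref{cor:simpcor} and \eqref{eq:state}, but your split at $\log d\asymp\sqrt{(\log x)(\log y)}$ leaves this range uncovered.

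In this range your Case~1 relies on Corollary~\ref{cor:simpcor}. That only gives $R=1+O(\log^2 d/((\log x)(\log y)))$, and this error dominates $(1+\log d)/\log x$ once $\log d>\log y$. The $O$-term is two-sided, so it is compatible with $R=1$. The target formula, however, forces a one-sided decay $R\le(1+O(\log d/\log x))\exp(-c\log^2 d/((\log x)(\log y)))$, because $b\ge c$. So Corollary~\ref{cor:simpcor} cannot supply the upper bound $\exp(-cu\log^2 d/\log^2 x)$ that your last paragraph attributes to it "once its error term is rewritten." That claim is false, so the proposed repair of Case~1 does not go through.

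\textbf{The two missing observations are elementary.}

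First, $\ubar=u$ exactly when $y\ge\log x$, because $y/\log y\ge u=(\log x)/\log y$ is equivalent to $y\ge\log x$. Hence $u^2+\ubar^2=2u^2$, and the factor in \eqref{eq:state} is literally $(1-\log^2 d/(2\log^2 x))^{bu}$, with base parameter $\log^2 d/(2\log^2 x)\le 1/2$. Your concerns about $\ubar<u$ and $a\to 1$ therefore concern an empty regime, and no rescaling of $b$ is needed.

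Second, $1/u_y\le 1/\ubar=\log y/\log x$, and this is $\le \log d/\log x$ as soon as $d\ge y$. So \eqref{eq:state} alone proves the statement for every $d\ge y$, not only when $u\log^2 d/\log^2 x\gg 1$.

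For $d\le y$, Corollary~\ref{cor:simpcor} suffices by itself. There $\log^2 d/((\log x)(\log y))\le \log d/\log x$, so its quadratic error is absorbed into $O((1+\log d)/\log x)$. The factor $(1-\log^2 d/(2\log^2 x))^{bu}$ with any fixed $b$ equals $1+O(\log d/\log x)$, so it is absorbed as well.

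Splitting at $d=y$ in this way is exactly the paper's proof. Since $b=b(x,y,d)$ may be chosen separately in each range, the "interpolation" obstacle you identify does not arise.
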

\begin{proof}
	If $d \le y$, this follows from Corollary \ref{cor:simpcor}. If $d>y$, this follows from \eqref{eq:state}. 
\end{proof}
\begin{rem}
We may replace $(1-\log^2 d/(2\log^2 x))^{bu}$ in Corollary \ref{cor:appsmalll} with $\exp(-\tilde{b}\log^2 d /(\log x \log y))$ for some function $\tilde{b}$ taking values in $[c,C]$. 
\end{rem}
In \cite[Cor.~3.2; $m=1$]{DeLa2017}, La Bret\`eche and Tenenbaum showed that if $y \le (\log x)^{1-\varepsilon}$ and $d \le x^{1-\varepsilon}$, then
\begin{align}
\nonumber	\Psi(x/d,y)&=\frac{\Psi(x,y)}{d^{\alpha}} \left(1+O_{\varepsilon}\left(\frac{1}{\exp((\log y)^{3/2-\varepsilon})\log x}+\frac{y^3}{\log^2 x(\log y) }\right)\right)\left(1-\frac{\log^2 d/\log^2 y}{u^2+\ubar^2}\right)^{b_{\varepsilon}\ubar}\\
	\label{eq:state2more22}		& =\frac{\Psi(x,y)}{d^{\alpha}} \left(1+O_{\varepsilon}\left(\frac{1}{\exp((\log y)^{3/2-\varepsilon})\log x} + \frac{y^3}{\log^2 x (\log y)} + \frac{\log^2 d}{\log^2 x (\log y)/y}\right)\right).
\end{align}
Here $\varepsilon >0$ is fixed and $b_{\varepsilon}=b_{\varepsilon}(x,y,d)$ is an unspecified function taking values in $[c_{\varepsilon},C_{\varepsilon}]$. We deduce
\begin{cor}\label{cor:app}
	Uniformly for $1 \le d \le x$ and $x \ge y \ge 2$, we have
\[	\Psi(x/d,y)	=	\frac{\Psi(x,y)}{d^{\alpha}} \left(1+O\left( \frac{1+\log d}{\log x}\left( \frac{\log y}{y} + \frac{1}{\max\{1,(\log x)/y\}}\right) + \frac{\log^2 d}{(\log x)( \log y) \max\{1,(\log x)/y\}}\right) \right). \]
\end{cor}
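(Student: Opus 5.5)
Write $E$ for the error term claimed in Corollary~\ref{cor:app}. The plan is to split the range of $y$ into three regimes and show that in each one an earlier estimate already gives an error that is $O(E)$.

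\emph{Large $y$.} For $y \ge \exp((\log\log x)^{2/3+\varepsilon})$ with, say, $\varepsilon=1/12$, the statement is exactly Corollary~\ref{cor:simpcor}, since the error terms coincide.

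\emph{Small $y$.} Suppose $y \le (\log x)^{1/2}$. Here $\max\{1,(\log x)/y\}=(\log x)/y$, so
\[E \asymp \frac{(1+\log d)\log y}{y\log x} + \frac{y(1+\log d)}{\log^2 x} + \frac{y\log^2 d}{\log^2 x\,\log y}.\]

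\begin{itemize}
\item If $d \le x^{1/2}$, apply \eqref{eq:state2more22} with a fixed $\varepsilon$ (say $1/4$). Its third error term matches the last term of $E$. For the second, $y^3/(\log^2 x\log y) \ll 1/(y\log x)$ because $y^4 \le \log^2 x$; this is $\ll E$ from the first term of $E$ with $d\ge 1$, noting that $1+\log d \gg 1$. The first term, $1/(\exp((\log y)^{3/2-\varepsilon})\log x)$, is likewise $\ll (\log y)/(y\log x)$.
\item If $x^{1/2}<d\le x$, then $\log d\asymp \log x$, so the last term of $E$ is $\asymp y/\log y \gg 1$. Hence $E\gg 1$, and the claim reduces to the upper bound $\Psi(x/d,y)\ll \Psi(x,y)d^{-\alpha}$, which is \cite[Thm.~2.4(i)]{DLBT20052}. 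This works because a relative error $O(E)$ with $E\gg1$ only asserts $\Psi(x/d,y) = \Psi(x,y)d^{-\alpha}(1+O(E))$, which follows from $0\le \Psi(x/d,y)\ll \Psi(x,y)d^{-\alpha}$.
\end{itemize}

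The same device, using the upper bound whenever $E\gg1$, is used throughout.

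\emph{Intermediate $y$.} Suppose $(\log x)^{1/2}<y<\exp((\log\log x)^{2/3+\varepsilon})$. I would use Lemma~\ref{lem:ularge}, whose main error terms are exactly $E$ (with $|\log d|$ in place of $1+\log d$). It then suffices to show $\additive \ll E$, and since $E \gg (\log y)/(y\log x)$, it is enough to prove
\[\additive \ll \frac{\log y}{y\log x}.\]

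In this range $u$ is huge, about $\log x/(\log\log x)^{2/3+\varepsilon}$, while $\ubar = y/\log y \ge (\log x)^{1/2-o(1)}$. So $\exp(-c\ubar^{1/3})\le \exp(-(\log x)^{1/6-o(1)})$, which beats every power of $\log x$. The prefactor $\sqrt{(\log x)(\log y)\max\{1,(\log x)/y\}}$ is at most polynomial in $\log x$, so the first part of $\additive$ is negligible. For the second part, $cu\log(2u)^{-2}$ is enormous, so the minimum equals $(\log y)^{3/2-\varepsilon'}$. Since $\log y\ge \tfrac12\log\log x$, this is $\ge (\log\log x)^{3/2-\varepsilon'}/C$, and $\exp$ of it is superpolynomial in $\log x$. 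Together with $\alpha\le 1$, this makes the second part negligible as well.

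\emph{Main obstacle.} The delicate point is matching the ranges of validity. The requirement $y\ge(\log x)^{1/2}$ is what makes $\exp(-c\ubar^{1/3})$ small enough. The constraint $y^4\le\log^2 x$ in the small regime is what controls the $y^3$ term of \eqref{eq:state2more22}. The two regimes must overlap, and the exponent $1/2$ achieves exactly this. The boundary $y\approx(\log x)^{1/2}$ is also where I would check carefully that $\exp(-(\log y)^{3/2-\varepsilon'})$ really decays faster than any power of $\log x$. It does, since $3/2-\varepsilon'>1$.
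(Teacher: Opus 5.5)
Your decomposition is essentially the paper's. The paper uses Corollary~\ref{cor:simpcor} for $y\ge\exp((\log\log x)^{3/4})$. For smaller $y$ it uses \eqref{eq:state2more22} when $d$ is at most a fixed power $x^{1-\varepsilon}$, and the upper bound $\Psi(x/d,y)\ll\Psi(x,y)d^{-\alpha}$ (absorbed because $E\gg1$) for $d$ near $x$. Your cutoff $x^{1/2}$ in place of the paper's $x^{2/3}$ is immaterial. However, two points in your write-up are wrong.

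\textbf{The $y^3$ term.} The bound $y^3/(\log^2x\log y)\ll 1/(y\log x)$ is equivalent to $y^4\ll\log x\log y$, not to $y^4\le\log^2 x$. At $y=(\log x)^{1/2}$ the ratio of the two sides is $\asymp\log x/\log\log x$. No other term of $E$ absorbs it either: for $d=2$ one has $E\asymp(\log y)/(y\log x)+y/\log^2x$, which is much smaller than $y^3/(\log^2x\log y)$ once $y\ge(\log x)^{1/4+\delta}$. So your small-$y$ step does not prove the claim for $(\log x)^{1/4+\delta}\le y\le(\log x)^{1/2}$.

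\textbf{The order of the thresholds.} Note that $\exp((\log\log x)^{3/4})=(\log x)^{(\log\log x)^{-1/4}}=(\log x)^{o(1)}$. It is below $(\log x)^{1/2}$ once $\log\log x\ge16$, and below $(\log x)^{1/4}$ once $\log\log x\ge256$. Hence your intermediate regime is empty for large $x$. Your own description of it uses the incompatible conditions $\log y\ge\frac12\log\log x$ and $\log y<(\log\log x)^{3/4}$. Your Lemma~\ref{lem:ularge} estimates there are fine but are never needed.

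The proof therefore survives only because of the overlap: the range where your small-$y$ step breaks is already covered by Corollary~\ref{cor:simpcor}. The concern in your ``main obstacle'' paragraph is a non-issue. The clean fix, which is exactly the paper's argument, is to define the small regime as $y<\exp((\log\log x)^{3/4})$. There $y^4\le\log x$ for large $x$, so both the $y^3$ term and the $\exp(-(\log y)^{3/2-\varepsilon})$ term of \eqref{eq:state2more22} are $\ll(\log y)/(y\log x)\ll E$. For bounded $x$ one has $E\gg1$, and the upper bound suffices.
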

\begin{proof}
	This follows from Corollary \ref{cor:simpcor} when $y \ge \exp((\log \log x)^{3/4})$, from \eqref{eq:state2more22} if $y <  \exp((\log \log x)^{3/4})$ and $d \le x^{2/3}$, and from $\Psi(x/d,y) \ll \Psi(x,y)/d^{\alpha}$ when $x \ge d > x^{2/3}$.
\end{proof}
\begin{rem}
Corollaries \ref{cor:appsmalll}--\ref{cor:app} can be improved when $d \sim 1$.
\end{rem}
\subsection{Proof of Corollary~\ref{cor:sfinto}: square-free smooth numbers}\label{sec:sfree}
When $x\ge y \ge (\log x)^{2+\varepsilon}$, La Bret\`eche and Tenenbaum \cite[Cor.~2.6]{DLBT20052} established that
\begin{equation}\label{eq:oldmu}
	\frac{\Psi_{\mu^2}(x,y)}{\Psi(x,y)}=\frac{1}{\zeta(2\alpha,y)} \left(1+ O_{\varepsilon}\left(\frac{1}{u_y}\right)\right).
\end{equation} 
The original proof of \eqref{eq:oldmu} uses the identity $\mu^2(n)=\sum_{d^2 \mid n} \mu(d)$ in the form
	\[ \sum_{n \in S(x,y)} \mu^2(n) = \sum_{d \in S(\sqrt{x},y)}\mu(d) \Psi(x/d^2,y)\] 
	and then applies \eqref{eq:state2} to each term (except $d=1$, for which $\Psi(x/d^2,y)=\Psi(x,y)$).
	In the range $y \ge (\log x)^2$ and $d \ge 2$, Corollary \ref{cor:simpcor} allows us to drop the term $1/u_y$ in \eqref{eq:state2}. The term $1/u_y$ is the most restrictive term, and its removal leads to an improved error by following the  proof of \eqref{eq:oldmu}. In this way, we obtain Corollary~\ref{cor:sfinto}.
\subsection{General sums}
In \cite{DLBT20052}, La Bret\`eche and Tenenbaum proved various results on sums of the shape
\[ \Psi(x,y;f):=\sum_{n \in S(x,y)} f(n) \]
where $f$ is a given function (defined on a set containing the positive integers). We describe those results. Defining the arithmetic function $h=f*\mu$, they let
\[ M_j = M_j(x,y;f):= \sum_{d \in S(x,y)} \frac{|h(d)|(\log d)^j}{d^{\alpha}}\]
for $j=0,1,2$. In \cite[Thm.~2.5]{DLBT20052}, they prove that
\[ \frac{\Psi(x,y;f)}{\Psi(x,y)}=\sum_{d \in S(x,y)} \frac{h(d)}{d^{\alpha}} + O\left(\frac{M_0}{u_y}+\frac{M_1}{\log x}+\frac{M_2 \ubar}{(\log x)^2}\right)\]
holds uniformly for $x \ge y \ge 2$. 
If $f$ is a continuously differentiable function defined on  $[1,\infty)$, they let
\[ M_j^*=M_j^*(x,y;f):=\int_{1}^{x} \frac{|f'(t)|(\log (x/t))^j}{(x/t)^{\alpha}}\diff{t}\]
for $j=0,1,2$. In \cite[Thm.~2.7]{DLBT20052}, they prove that
\begin{equation}\label{eq:thm27}
	\frac{\Psi(x,y;f)}{\Psi(x,y)}= f(x)-\int_{1}^{x} \frac{f'(t)}{(x/t)^{\alpha}}\diff{t} +O\left(\frac{M_0^*}{u_y}+\frac{M_1^*}{\log x}+\frac{M_2^* \ubar}{(\log x)^2}\right)
\end{equation}
holds uniformly for $x \ge y \ge 2$. If $f$ is an additive arithmetic function, they let
\[ m_j=m_j(x,y;f):=\sum_{p^{\nu}\in S(x,y)} \frac{|f(p^{\nu})|(1-p^{-\alpha})(\log (p^{\nu}))^j}{p^{\nu \alpha}}\]
for $j=0,1,2$. In \cite[Thm.~2.9]{DLBT20052}, they prove that
\[	\frac{\Psi(x,y;f)}{\Psi(x,y)}= \sum_{p^{\nu} \in S(x,y)}f(p^{\nu}) \frac{1-p^{-\alpha}}{p^{\nu\alpha}} + O\left(\frac{m_0}{u_y}+\frac{m_1}{\log x}+\frac{m_2 \ubar}{(\log x)^2}\right). \]
\begin{cor}\label{cor:M0remove}
	The error terms $M_0/u_y$ and $m_0/u_y$ in Theorems 2.5 and 2.9 of \cite{DLBT20052} may be omitted. The error term $M_0^*/u_y$ in Theorem 2.7 of \cite{DLBT20052} may be replaced with  $M_0^*/\log x $.
\end{cor}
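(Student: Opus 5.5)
The plan is to follow the proofs of Theorems 2.5, 2.7 and 2.9 of \cite{DLBT20052} line by line, replacing each use of \eqref{eq:state2} by Corollary~\ref{cor:app}. The only source of the term $1/u_y$ in those proofs is the $1/u_y$ in \eqref{eq:state2}. Corollary~\ref{cor:app} gives an error of at most
\[ \frac{1+\log d}{\log x} + \frac{\log^2 d}{(\log x)(\log y)\max\{1,(\log x)/y\}}, \]
since $\log y/y + 1/\max\{1,(\log x)/y\} \ll 1$. We also have $(\log y)\max\{1,(\log x)/y\} \asymp \log x/\ubar$ (up to constants, as $\ubar=\min\{u,y/\log y\}$), so the second term is $\ll \ubar\log^2 d/\log^2 x$. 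Hence, for every $d\ge 2$, the error in $\Psi(x/d,y)d^{\alpha}/\Psi(x,y)-1$ is
\[ \ll \frac{\log d}{\log x}+\frac{\ubar \log^2 d}{\log^2 x}. \]
The only new feature is the additive $1/\log x$, which matters only for $d$ with $\log d \ll 1$. For $d=1$ the ratio is exactly $1$, with no error.

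For Theorem 2.5, write $\Psi(x,y;f)=\sum_{d\in S(x,y)}h(d)\Psi(x/d,y)$. The $d=1$ term gives $h(1)$ exactly. For $d\ge2$ we have $1\le \log d/\log 2$, so the term $(1+\log d)/\log x$ is $\ll \log d/\log x$. Summing $|h(d)|d^{-\alpha}$ times the error bound above gives $O(M_1/\log x+M_2\ubar/\log^2 x)$, with no $M_0$ term.

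For Theorem 2.9, the additive function satisfies $f=\sum_{p^\nu}(f(p^\nu)-f(p^{\nu-1}))\cdot 1_{p^\nu\mid n}$. Equivalently, $\Psi(x,y;f)=\sum_{p^\nu\in S(x,y)}f(p^\nu)\bigl(\Psi(x/p^\nu,y)-\Psi(x/p^{\nu+1},y)\bigr)$. Here every modulus satisfies $p^\nu\ge2$. Applying Corollary~\ref{cor:app} to both terms gives the main term $f(p^\nu)(1-p^{-\alpha})p^{-\nu\alpha}$ plus errors. To control these errors by the weights in $m_j$, which carry the factor $(1-p^{-\alpha})$, one must check as in \cite{DLBT20052} that the error of the difference is governed by this factor, or else handle it the way those authors do. In either case the constant term $1/\log x$ is absorbed into $\log(p^\nu)/\log x$, so $m_0/u_y$ disappears.

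For Theorem 2.7, partial summation gives
\[ \Psi(x,y;f)=f(x)\Psi(x,y)-\int_1^x f'(t)\Psi(t,y)\,\diff t. \]
Apply Corollary~\ref{cor:app} with $d=x/t$, which is real and lies in $[1,x]$; Lemma~\ref{lem:ularge} and the Corollaries are uniform in real $d$, though this needs checking for Corollary~\ref{cor:app}. For $d$ close to $1$, $\log d$ is small, so the $1/\log x$ term cannot be absorbed into $\log d/\log x$. This is why $M_0^*/u_y$ becomes $M_0^*/\log x$ rather than vanishing. The remaining errors give $M_1^*/\log x$ and $M_2^*\ubar/\log^2 x$.

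I expect the main obstacle to be verifying in the Theorem 2.9 case that the errors for $\Psi(x/p^\nu,y)$ and $\Psi(x/p^{\nu+1},y)$ produce weights compatible with $(1-p^{-\alpha})$ when $\alpha$ is small, i.e.\ when $y\ll \log x$. I would first follow exactly the bookkeeping of \cite{DLBT20052}, where the $m_1,m_2$ terms already appear, and observe that only the $1/u_y$ contribution changes.
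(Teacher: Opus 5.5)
\textbf{Theorems 2.5 and 2.7.} Your treatment of these is exactly the paper's argument. You replace \eqref{eq:state2} by Corollary~\ref{cor:app}. The term $d=1$ is exact, and $1\le \log d/\log 2$ for integers $d\ge 2$, so only $M_1/\log x+M_2\ubar/(\log x)^2$ survives in Theorem 2.5. In Theorem 2.7 the point $d=x/t$ comes arbitrarily close to $1$, so the $1/\log x$ cannot be absorbed and you get $M_0^*/\log x$. Your relation $(\log y)\max\{1,(\log x)/y\}=\log x/\ubar$ is in fact an identity.

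\textbf{Theorem 2.9.} Here the paper does no more than you: it asserts, citing p.~147 of \cite{DLBT20052}, that this theorem also follows from \eqref{eq:state2} with $d\ge 2$. The step you leave open is a genuine gap, and it cannot be closed by bookkeeping. If you expand $\Psi(x/p^\nu,y)$ and $\Psi(x/p^{\nu+1},y)$ separately, you only control their difference to within $p^{-\nu\alpha}$ times the Corollary~\ref{cor:app} errors, with no factor $1-p^{-\alpha}$. That is harmless when $\alpha\gg1$: for $y\ge(\log x)^{1+\varepsilon}$ one has $1-p^{-\alpha}\gg_\varepsilon 1$, and your argument then works. When $\alpha=o(1)$, however, the first-order terms genuinely fail to cancel.

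\textbf{Counterexample at $y\asymp\log x$.} Take $y=\log x$ and the additive function $f=\mathbf{1}_{2\| n}$. Then $\Psi(x,y;f)=\Psi(x/2,y)-\Psi(x/4,y)$, the main term is $2^{-\alpha}(1-2^{-\alpha})$, and $m_j\asymp\alpha\asymp 1/\log y$. By \eqref{eq:psixdye''}, for $d\in\{2,4\}$,
\[ \frac{\Psi(x/d,y)}{\Psi(x,y)}=d^{-\alpha}\Bigl(1-\kappa\log d+O\Bigl(\frac{1}{\log x\log y}\Bigr)\Bigr),\qquad \kappa:=\frac{1}{\alpha\sigma_2}+\frac{\sigma_3}{2\sigma_2^2}. \]
So the deviation from the main term is $-\kappa\log 2\cdot 2^{-\alpha}(1-2^{1-\alpha})+O(1/(\log x\log y))$, which equals $\kappa\log 2\,(1+o(1))+O(1/(\log x\log y))$.

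To size $\kappa$, write $a_p=\alpha\log p$, $A(a)=a^2e^a/(e^a-1)^2$ and $B(a)=a^3(e^a+e^{2a})/(e^a-1)^3$. Then $\sigma_2=\alpha^{-2}\sum_{p\le y}A(a_p)$ and $\sigma_3=-\alpha^{-3}\sum_{p\le y}B(a_p)$, so
\[ \kappa=\frac{1}{\alpha\sigma_2}\cdot\frac{\sum_{p\le y}(2A-B)(a_p)}{2\sum_{p\le y}A(a_p)},\qquad (2A-B)(a)=\frac{a^2e^a\bigl(2(e^a-1)-a(e^a+1)\bigr)}{(e^a-1)^3}<0\quad(a>0). \]
Here $\alpha\log y\asymp 1$, so $a_p\asymp1$ for $\sqrt y<p\le y$. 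Hence $\kappa\asymp -1/(\alpha\sigma_2)\asymp -1/\log x$.

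The true error is therefore $\asymp 1/\log x$. This matches $m_0/u_y$, whereas $m_1/\log x+m_2\ubar/(\log x)^2\asymp 1/(\log x\log y)$. So if Theorem 2.9 is read in the full range $x\ge y\ge 2$, the term $m_0/u_y$ cannot be omitted. What Corollary~\ref{cor:app} actually gives is one of two things: the omission under a restriction such as $y\ge(\log x)^{1+\varepsilon}$, or a version whose weights are $|f(p^\nu)|p^{-\nu\alpha}(\log p^\nu)^j$, without the factor $1-p^{-\alpha}$.
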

\begin{proof}
	Theorems 2.5 and 2.9 of \cite{DLBT20052} follow quickly from \eqref{eq:state2} for $d \ge 2$; see page 147 of \cite{DLBT20052}. Our Corollary \ref{cor:app} says one may omit the term $1/u_y$ in \eqref{eq:state2} if $d \ge 2$, which is the reason for the existence of $M_0/u_y$ and $m_0/u_y$ in the first place. As for Theorem 2.7, it relies on 
	\[ \Psi(x/t,y)=\frac{\Psi(x,y)}{t^{\alpha}}\left(1+ O\left( \frac{1}{u_y} + \frac{\log t}{\log x} + \frac{ \log^2 t}{(\log x )(\log y) \max\{1,(\log x)/y\}}\right)\right)\] 
	for real $1 \le t \le x$; see page 197 of \cite{DLBT20052}.
	According to Corollary \ref{cor:app}, we may replace $1/u_y$ with $1/\log x$, leading to $M_0^*/u_y$ being improved to $M_0^*/\log x$.
\end{proof}
\begin{rem} 
When $y=o(\log x)$, one can improve the size of the terms $M_1/\log x$ and $m_1/\log x$ by utilizing the full strength of Lemma \ref{lem:ularge} and \eqref{eq:state2more22}. 
\end{rem}
We describe a special case of Corollary \ref{cor:M0remove}. In \cite[Cor.~2.8]{DLBT20052}, it is proved that 
\[	\frac{\Psi(x,y;\log)}{\Psi(x,y)} = \log x-\frac{1}{\alpha}+O\left( \frac{1}{\alpha u_y}\right) \]
holds uniformly for $x \ge y \ge 2$, as a special case of \eqref{eq:thm27}. Here $\log$ is the natural logarithm. Using Corollary \ref{cor:M0remove} with $f=\log$, we obtain, following the proof of \cite[Cor.~2.8]{DLBT20052}, the following
\begin{cor}\label{cor:log}
	Uniformly for $x \ge y \ge 2$,
	\[ \frac{\Psi(x,y;\log)}{\Psi(x,y)} = \log x-\frac{1}{\alpha}+O\left( \frac{1}{\alpha^2 \log x}\right).\]
\end{cor}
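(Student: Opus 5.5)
We follow the proof of \cite[Cor.~2.8]{DLBT20052}, applying \eqref{eq:thm27} with $f=\log$ and the improvement of Corollary~\ref{cor:M0remove}: the term $M_0^*/u_y$ is replaced by $M_0^*/\log x$. Throughout set $v=\log(x/t)$.

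\emph{Main term.} Since $f'(t)=1/t$, the substitution $t=xe^{-v}$ gives
\[ \int_1^x \frac{f'(t)}{(x/t)^{\alpha}}\diff{t}=\int_0^{\log x} e^{-\alpha v}\diff{v}=\frac{1-x^{-\alpha}}{\alpha}. \]
Hence the main term is $\log x-1/\alpha+x^{-\alpha}/\alpha$.

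\emph{Error terms.} The same substitution gives
\[ M_j^*=\int_0^{\log x} v^j e^{-\alpha v}\diff{v}\le \frac{j!}{\alpha^{j+1}}. \]
So the errors are bounded by
\[ \frac{M_0^*}{\log x}\ll \frac{1}{\alpha\log x},\qquad \frac{M_1^*}{\log x}\ll \frac{1}{\alpha^2\log x},\qquad \frac{M_2^*\ubar}{(\log x)^2}\ll \frac{\ubar}{\alpha^3(\log x)^2}. \]
The first is at most a constant times the second, because $\alpha\le 1+o(1)$ always; in fact $\alpha\ll1$ for all $x\ge y\ge 2$ by \eqref{eq:alphasize}. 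It remains to show:
\begin{enumerate}
\item the third term is $\ll 1/(\alpha^2\log x)$, which amounts to $\ubar\ll \alpha\log x$;
\item the stray term $x^{-\alpha}/\alpha$ is $\ll 1/(\alpha^2\log x)$, which amounts to $\alpha\log x\cdot x^{-\alpha}\ll 1$.
\end{enumerate}

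\emph{Step (ii).} Write $w=\alpha\log x$. The function $we^{-w}$ is bounded for $w\ge 0$, so $\alpha\log x\cdot x^{-\alpha}=we^{-w}\ll1$.

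\emph{Step (i): the main obstacle.} We must show $\ubar\ll\alpha\log x$ uniformly, and we split into two ranges using \eqref{eq:alphasize}.
\begin{itemize}
\item If $y\ge \log x$, then $\log(1+y/\log x)\ge\log 2$, so $\alpha\gg 1/\log y$ for large $y$. Hence $\alpha\log x\gg u\ge \ubar$.
\item If $y<\log x$, then $\alpha\asymp (y/\log x)/\log y$, so $\alpha\log x\asymp y/\log y\ge\ubar$.
\end{itemize}
The $\log\log y/\log y$ factor in \eqref{eq:alphasize} is harmless for $y$ large. For small $y$ (bounded $y$) we instead use $\alpha\gg 1/\log x$ directly: $\alpha\asymp \pi(y)/\log x$ there, which still gives $\alpha\log x\gg 1\gg \ubar$ since $\ubar\le y/\log y\ll1$.

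Combining the main term with these bounds yields Corollary~\ref{cor:log}.
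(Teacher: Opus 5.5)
Your proof is correct and follows the paper's route: apply \eqref{eq:thm27} with $f=\log$, using the improvement of Corollary~\ref{cor:M0remove} that replaces $M_0^*/u_y$ by $M_0^*/\log x$, exactly as in the proof of \cite[Cor.~2.8]{DLBT20052}. Your explicit verifications fill in the details the paper leaves implicit: that $M_j^*\le j!\,\alpha^{-j-1}$, that $\alpha\ll 1$, that $\ubar\ll\alpha\log x$ (split according to $y\ge\log x$ or $y<\log x$, with bounded $y$ treated separately), and that the stray term satisfies $x^{-\alpha}/\alpha\ll 1/(\alpha^2\log x)$.
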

\subsection{Smooth integers coprime to \texorpdfstring{$m$}{m}}
Given a $y$-smooth integer $m$, let $\Psi_m(x,y)$ be the number of $y$-smooth integers up to $x$ that are coprime to $m$. In \cite[Thm.~2.1]{DLBT20052}, La Bret\`eche and Tenenbaum estimate $\Psi_m(x,y)$ in terms of $\Psi(x,y)$. Precisely, they bound the quantity
\begin{equation}\label{eq:gmratioquant} \frac{\Psi_m(x,y)}{\Psi(x,y)g_m(\alpha)} - 1
\end{equation}
where
\[g_m(s):=\prod_{p \mid m}(1-p^{-s}).\]
In \cite[Thm.~2.4]{DLBT20052}, they estimate $\Psi_m(x/d,y)$ in terms of $\Psi(x,y)$. They do so in two steps: first, one uses the bound for \eqref{eq:gmratioquant} to estimate $\Psi_m(x/d,y)$ in terms of $\Psi(x/d,y)$, and then  one uses the $m=1$ case of \cite[Thm.~2.4]{DLBT20052} to estimate $\Psi(x/d,y)$ in terms of $\Psi(x,y)$. Since in Corollary \ref{cor:appsmalll} we improved on the $m=1$ case of \cite[Thm.~2.4]{DLBT20052} when $y \ge \log x$, we can improve the general $m$ case of \cite[Thm.~2.4]{DLBT20052} by following the proof of \cite[Thm.~2.4]{DLBT20052} but using Corollary \ref{cor:appsmalll} in the process. One obtains in this way the following
\begin{cor}\label{cor:cop}
	For $x \ge y \ge \log x$, $1 \le d \le x/y$, and a $y$-smooth integer $m$ such that $\omega(m) \ll \sqrt{y}/(\log y)^{\delta}$,
\[	\frac{\Psi_m(x/d,y)}{\Psi(x,y)} =  \frac{g_m(\alpha)}{d^{\alpha}}\left( 1+O\left(\frac{1}{\log x}+(1+E_m)\left( \frac{\log d}{\log x} + \frac{E_m}{u}\right)\right)\right)\exp\left(-b_2 \frac{\log^2 d}{\log x \log y} \right).\]
Here $b_2=b_2(x,y,d)$ is a function taking values in $[c,C]$, and $E_m=E_m(x,y)$, $\delta=\delta(x,y)\in [0,1/2]$ are functions defined in \cite[Eqs.~(2.5) and (2.8)]{DLBT20052}.
 If $\omega(m) \ll 1$, then the range of $d$ may be extended to $1 \le d \le x$.
\end{cor}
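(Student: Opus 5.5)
We follow the two-step structure of the proof of \cite[Thm.~2.4]{DLBT20052}, but use Corollary~\ref{cor:appsmalll} in place of the $m=1$ case of that theorem. Set $x'=x/d$. Since $d \le x/y$, we have $x' \ge y$. The plan is first to compare $\Psi_m(x',y)$ with $\Psi(x',y)$, and then to compare $\Psi(x',y)$ with $\Psi(x,y)$.

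\textbf{Step 1.} We apply \cite[Thm.~2.1]{DLBT20052} at the point $(x',y)$. This is where the restriction $\omega(m)\ll \sqrt{y}/(\log y)^{\delta}$ is used, and it is also why $x'\ge y$ is needed. It gives
\[ \Psi_m(x',y)=\Psi(x',y)\,g_m(\alpha')\bigl(1+O(E_m(x',y)/u')\bigr), \]
where $\alpha'=\alpha(x',y)$ and $u'=\log x'/\log y$.

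\textbf{Step 2.} We compare $g_m(\alpha')$ with $g_m(\alpha)$. Write
\[ \log g_m(s)=\sum_{p\mid m}\log(1-p^{-s}), \qquad \frac{g_m'}{g_m}(s)=\sum_{p\mid m}\frac{\log p}{p^{s}-1}. \]
By the definition of $E_m$ in \cite[Eq.~(2.5)]{DLBT20052}, this logarithmic derivative is essentially $E_m$ times a normalising factor of size about $\log x/u$ (more precisely, comparable to $\sigma_1$-type quantities). Next we estimate $\alpha'-\alpha$. Differentiating the saddle-point equation $\sum_{p\le y}\log p/(p^{\alpha}-1)=\log x$ gives
\[ \alpha'-\alpha \asymp \frac{\log d}{\sigma_2}, \qquad \sigma_2\asymp (\log x)(\log y). \]
Combining, $g_m(\alpha')/g_m(\alpha)=\exp\bigl(O(E_m\log d/\log x)\bigr)$. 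In the same step, the move from $E_m(x',y)/u'$ to $E_m(x,y)/u$ costs an extra factor $1+E_m$ together with a $\log d/\log x$ term. This is the source of the factor $(1+E_m)(\log d/\log x+E_m/u)$. When $d$ is close to $x/y$, $\exp(O(E_m\log d/\log x))$ is not $1+O(\cdot)$; there we would follow \cite{DLBT20052} and absorb the excess into the exponential factor with $b_2$, which is permitted because $b_2$ is only required to lie in $[c,C]$.

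\textbf{Step 3.} We apply Corollary~\ref{cor:appsmalll} (valid since $y\ge \log x$) to get
\[ \Psi(x',y)=\frac{\Psi(x,y)}{d^{\alpha}}\Bigl(1+O\Bigl(\frac{1+\log d}{\log x}\Bigr)\Bigr)\Bigl(1-\frac{\log^2 d}{2\log^2 x}\Bigr)^{bu}. \]
By the Remark following it, the last factor may be written as $\exp(-\tilde b\log^2 d/(\log x\log y))$. Multiplying Steps 1--3 gives the stated formula. The $O(1/\log x)$ term replaces the $1/u_y$ term of the original Theorem~2.4.

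\textbf{Extension to $d\le x$ when $\omega(m)\ll1$.} If $\omega(m)\ll 1$, then $g_m(\alpha)\gg_{\omega(m)}(1-2^{-\alpha})^{O(1)}$, and we use inclusion--exclusion directly:
\[ \Psi_m(x',y)=\sum_{e\mid m}\mu(e)\,\Psi(x'/e,y). \]
Applying Corollary~\ref{cor:appsmalll} to each term with divisor $de\le x$ (and $\Psi(x'/e,y)=0$ if $de>x$) reassembles $g_m(\alpha)d^{-\alpha}$. The errors are controlled because there are $O(1)$ terms and $\log e\ll \log y$. The point needing care is when the main terms cancel, i.e.\ when $g_m(\alpha)$ is small ($\alpha$ near $0$, $y$ near $\log x$). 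Here we would check that relative errors of size $\log e/\log x$ stay proportional to $g_m(\alpha)$, using $1-p^{-\alpha}\asymp \min(1,\alpha\log p)$.

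\textbf{Main obstacle.} The delicate step is Step 2: tracking how $E_m$ changes between $(x,y)$ and $(x/d,y)$ so that the error has exactly the shape $(1+E_m)(\log d/\log x+E_m/u)$, uniformly up to $d=x/y$.
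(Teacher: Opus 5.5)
Your Steps 1--3 are the paper's argument. The paper reruns the two-step proof of \cite[Thm.~2.4]{DLBT20052}: first Thm.~2.1 at $x/d$, then a comparison of $\Psi(x/d,y)$ with $\Psi(x,y)$ using Corollary~\ref{cor:appsmalll} instead of the $m=1$ case, which is what turns $1/u_y$ into $1/\log x$. The $g_m$/$E_m$ bookkeeping is inherited from \cite{DLBT20052} unchanged.

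Where you reconstruct that bookkeeping yourself, two justifications are wrong.
\begin{itemize}
\item The estimate $\alpha'-\alpha\asymp \log d/((\log x)(\log y))$ is not uniform up to $d=x/y$. The mean value theorem gives $\alpha'-\alpha=\log d/\phi_2(t,y)$ for some $t\in[\alpha,\alpha']$, and near $\alpha'$ one only has $\phi_2\asymp \log(x/d)\log y$. At $d=x/y$ the shift is of order $\log u/\log y$, not $1/\log y$.
\item Since $g_m$ is increasing and $\alpha'>\alpha$, the factor $g_m(\alpha')/g_m(\alpha)$ is $\ge 1$. Absorbing it into $\exp(-b_2\log^2 d/(\log x\log y))$ therefore means lowering $b_2$. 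That is legitimate only if $\log(g_m(\alpha')/g_m(\alpha))$ is at most a small multiple of $\log^2 d/(\log x\log y)$ whenever it is not $O(1)$. This is an a priori size bound on $E_m$ relative to $u$, and it must be extracted from the hypothesis on $\omega(m)$. The fact that $b_2$ is unspecified does not supply it.
\end{itemize}

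The extension to $d\le x$ contains a step that would fail as proposed. Applying Corollary~\ref{cor:appsmalll} to each $\Psi(x/(de),y)$ gives unrelated $O$-terms and a different unknown $b(x,y,de)$ for each $e\mid m$. So nothing forces the alternating sum to have size $g_m(\alpha)d^{-\alpha}$; when $\log^2 d/(\log x\log y)$ is large it is not even known to be positive. The ``check'' that the errors stay proportional to $g_m(\alpha)$ therefore cannot be carried out.

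It is also unnecessary, because only $x/y<d\le x$ is new.
\begin{itemize}
\item If $u\ge 2$, then $\log d/\log x\ge 1/2$, so the claim is merely an upper bound. Use $\Psi_m\le\Psi$ together with Corollary~\ref{cor:appsmalll}. Then absorb $1/g_m(\alpha)\ll(\log y)^{O(1)}$ (since $\alpha\gg 1/\log y$ for $y\ge\log x$) into the exponential factor, using $\log^2 d/(\log x\log y)\ge u/4$. When $u\ll\log\log y$ this absorption is not needed, because then $\alpha\ge 1/2$ and $g_m(\alpha)\asymp 1$.
\item If $u<2$, then $\alpha\ge 1/2$, so $g_m(\alpha)\asymp_{\omega(m)}1$ and the exponential factors are $1+O(\log(de)/\log x)$. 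Your inclusion--exclusion then goes through with no cancellation problem.
\end{itemize}
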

Previously, Corollary \ref{cor:cop}  was proved with the error term $1/\log x$ replaced by $1/u_y$ \cite[Thm.~2.4]{DLBT20052}.
\section{The anatomy of smooth integers}
Many works \cite{AlladiTuran,Xuan1,Xuan2,DLBT2005,DLB2,DLB3,DLB4,DLB2016,DLB6} were devoted to the study of the Tur{\'a}n--Kubilius inequality for additive functions supported on $y$-smooth numbers. Given $t \in [2,y]$, let \[\omega_t(n):=\sum_{p \mid n, \, p \le t}1.\]
We consider the variance of $\omega_t(N_{x,y})$: \[V_t(x,y):=\EE (\omega_t(N_{x,y})-\EE\omega_t(N_{x,y}))^2.\]
Applying the general Tur{\'a}n--Kubilius inequality of La Bret\`eche, Lamzouri, and Tenenbaum \cite[Thm.~1.1]{DLB6} to the additive function $\omega_t$, one derives the following special case.
\begin{cor}[La Bret\`eche--Lamzouri--Tenenbaum]\label{cor:special}
	Suppose $2 \le t \le y \le x$. When $\min\{y,u\} \to \infty$, we have \[V_t(x,y) \le (1+o(1)) \sum_{p\le t}p^{-\alpha}(1-p^{-\alpha}).\] 
\end{cor}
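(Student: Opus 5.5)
The plan is to obtain Corollary~\ref{cor:special} as the specialization $f=\omega_t$ of the general Tur\'an--Kubilius inequality \cite[Thm.~1.1]{DLB6}. No new analytic input is needed: one substitutes and simplifies the resulting main term.

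First recall the shape of \cite[Thm.~1.1]{DLB6}. For a complex additive function $f$ and $N=N_{x,y}$, it bounds the variance $\EE|f(N)-\EE f(N)|^2$ by
\[
(1+o(1))\sum_{p^\nu\in S(x,y)} \frac{|f(p^\nu)|^2(1-p^{-\alpha})}{p^{\nu\alpha}},
\]
or by a closely related weighted quantity, when $\min\{y,u\}\to\infty$. The precise normalization should be checked against that paper. The natural weight $p^{-\nu\alpha}(1-p^{-\alpha})$ is exactly $\PP(p^\nu\,\|\,N)$ in the model where the exponent of each $p$ in $N$ is geometric with parameter $p^{-\alpha}$. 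This is consistent with \eqref{eq:imp}.

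Next, apply the bound with $f=\omega_t$. Here $f(p^\nu)=1$ if $p\le t$ and $\nu\ge1$, and $f(p^\nu)=0$ otherwise. Summing over $\nu\ge1$ gives
\[
\sum_{\nu\ge1} p^{-\nu\alpha}(1-p^{-\alpha})=p^{-\alpha}.
\]
The truncation $p^\nu\le x$ only decreases the sum. This produces $\sum_{p\le t}p^{-\alpha}$, which is not quite the target.

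The target $\sum_{p\le t}p^{-\alpha}(1-p^{-\alpha})$ is the variance of $\sum_{p\le t}\mathbf 1[p\mid N]$ in the independent model, since each Bernoulli variable has variance $p^{-\alpha}(1-p^{-\alpha})$. So the main step, and the expected obstacle, is to confirm that \cite[Thm.~1.1]{DLB6} is stated in this sharper centered form. In that form the bound is
\[
(1+o(1))\sum_{p\le y}\operatorname{Var}\bigl(f(p^{\nu_p})\bigr),
\]
where $\nu_p$ is geometric with $\PP(\nu_p\ge k)=p^{-k\alpha}$. If the theorem is formulated this way, then with $f(p^{\nu_p})=\mathbf 1[\nu_p\ge1]$ for $p\le t$ and $0$ otherwise, each variance equals $p^{-\alpha}(1-p^{-\alpha})$, and the corollary follows immediately.

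If instead the theorem is stated with an uncentered second moment, I would subtract the squared mean, using that the model mean of $\omega_t$ is $\sum_{p\le t}p^{-\alpha}$. The uniformity condition $\min\{y,u\}\to\infty$ is inherited directly from the hypotheses of \cite[Thm.~1.1]{DLB6}.
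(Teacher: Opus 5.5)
Specializing \cite[Thm.~1.1]{DLB6} is legitimate in principle; the paper itself notes that the corollary is the case $f=\omega_t$ of that theorem. However, your proposal never establishes the one fact it rests on. You need the theorem's right-hand side to be the centered model variance $\sum_p \mathrm{Var}\,f(p^{\nu_p})$, with an $o(1)$ uniform in $f$ (uniformity is needed because $t$ may vary with $x$). The form you write down first, with weights $|f(p^\nu)|^2p^{-\nu\alpha}(1-p^{-\alpha})$, only gives $V_t(x,y)\le(1+o(1))\sum_{p\le t}p^{-\alpha}$. That does not imply the claim. For bounded $t$ and $\alpha\gg1$, the lost factor $1-p^{-\alpha}$ changes the bound by a constant factor. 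For $y=o(\log x)$ one has $\alpha\log y\asymp y/\log x\to0$, so $1-p^{-\alpha}\asymp\alpha\log p\to0$ uniformly for $p\le y$, and the target is smaller by an unbounded factor.

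Your fallback does not repair this. An upper bound for $V_t(x,y)$ cannot be sharpened by subtracting a squared mean from its right-hand side. Also, the correction relating $\sum_p\EE f(p^{\nu_p})^2$ to the model variance is $\sum_{p\le t}p^{-2\alpha}$, not the square of the model mean $\sum_{p\le t}p^{-\alpha}$. If instead the theorem bounded the raw moment $\EE\,\omega_t(N_{x,y})^2$, subtracting $(\EE\,\omega_t(N_{x,y}))^2$ would leave an error of size $o(1)(\sum_{p\le t}p^{-\alpha})^2$, which swamps the target. As written, the argument is a citation whose decisive point is left unchecked.

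The paper's own proof does not depend on the shape of \cite{DLB6}. It writes $\omega_t(N_{x,y})-\EE\,\omega_t(N_{x,y})=\sum_{p\le t}X_p$ with $X_p=\mathbf{1}_{p\mid N_{x,y}}-\Psi(x/p,y)/\Psi(x,y)$. The diagonal $\sum_{p\le t}\EE X_p^2$ equals $(1+o(1))\sum_{p\le t}p^{-\alpha}(1-p^{-\alpha})$ by \eqref{eq:state2more4}. Its relative error $O(\log p/\log x)$ is negligible against $1-p^{-\alpha}$ because $\alpha\log x\to\infty$. The covariances are then shown to be harmless in three ranges:
\begin{itemize}
\item when $(\log\log x)^{3/4}\le\log y\le c\log x/(\log\log x)^3$, Lemma~\ref{lem:psipsi} gives $\EE X_pX_q<0$;
\item for smaller $y$ they are negligible by \eqref{eq:state2more22};
\item for larger $y$ they are controlled via de Bruijn's estimate for $\Psi(x,y)$, Alladi's log-concavity of $\rho$, and \eqref{eq:betaalpha}.
\end{itemize}
This covariance decomposition is what you would need to turn your pointer to \cite{DLB6} into an actual proof.
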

Upper bounds on $V_t$ are essential to the study of the anatomy of smooth numbers \cite{DLBT2005,DLB2016}. In this section, we give a short proof of Corollary \ref{cor:special}, based on the following variant of Lemma \ref{lem:ularge}.
\begin{lem}\label{lem:psipsi}	Fix $\varepsilon>0$.	Let $x \ge y \ge 2$. Suppose $1 \le a,b$ and $ab \le x$. Then
	\begin{multline*} \frac{\Psi(x/(ab),y)}{\Psi(x,y)} -\frac{\Psi(x/a,y)\Psi(x/b,y)}{\Psi(x,y)^2} \\= -(ab)^{-\alpha}\left( \frac{\log a \log b}{\phi_2}\left(1 + O\left(\frac{1}{\ubar}+\frac{\log a \log b}{\phi_2}+ \frac{\log(ab)}{\phi_2^{1/2}}\right)\right) + O_{\varepsilon}(\additive + \additive^2)\right),
	\end{multline*}
	where $\additive$ is defined in \eqref{eq:delta}, and 
	\begin{equation}
		\label{eq:phi_2}
		\phi_2=\phi_2(\alpha,y):=\sum_{p \le y}\frac{p^{\alpha}\log^2 p}{(p^{\alpha}-1)^2}.
	\end{equation}
\end{lem}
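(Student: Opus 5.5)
The plan is to work with the normalised ratio
\[ R(d):=\frac{d^{\alpha}\Psi(x/d,y)}{\Psi(x,y)}, \]
so that the left-hand side equals $(ab)^{-\alpha}\bigl(R(ab)-R(a)R(b)\bigr)$. Everything then reduces to showing that $R(ab)-R(a)R(b)=-\frac{\log a\log b}{\phi_2}(1+\dots)+O_\varepsilon(\additive+\additive^2)$. For this I would reuse the saddle-point machinery behind Lemma~\ref{lem:ularge} in \S\ref{sec:technical}, rather than the statement of that lemma. The statement only gives $R(d)=1+O(\cdots)$, which is too crude to exhibit the cross term $\log a\log b/\phi_2$.

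The input I want from that proof is a representation
\[ \Psi(X,y)=F(\log X)\,(1+O_\varepsilon(\additive)), \qquad x/(ab)\le X\le x, \]
with $F$ smooth and explicit, of the shape
\[ F(\log X)\approx X^{\alpha_X}\zeta(\alpha_X,y)\Big/\Bigl(\alpha_X\sqrt{2\pi\phi_2(\alpha_X,y)}\Bigr) \]
times a smooth correction factor, where $\alpha_X=\alpha(X,y)$. All non-smooth errors (the $O(1/u)$-type terms depending on $X$) must be either absorbed into $F$ or bounded by $\additive$; this should be the form in which Lemma~\ref{lem:ularge} is actually proved. Put
\[ G(L):=\log F(\log x-L)-\log F(\log x)+\alpha L. \]
Then $R(d)=e^{G(\log d)}(1+O(\additive))$ and $G(0)=0$. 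Since $\frac{d}{dL}\bigl(L\alpha_{L}+\log\zeta(\alpha_L,y)\bigr)=\alpha_L$ by the saddle-point equation, the dominant part of $G$ satisfies $G'(0)=0$, up to the derivative of the prefactor $\log(\alpha\sqrt{\phi_2})$, which is small.

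Next I would compute the derivatives of $G$. Writing $A=\log a$ and $B=\log b$,
\[ G(A+B)-G(A)-G(B)=\int_0^A\!\!\int_0^B G''(s+t)\,\diff s\,\diff t. \]
The main piece of $G''$ is $-\frac{d\alpha_L}{dL}=-1/\phi_2(\alpha_L,y)$. The prefactor $\log(\alpha_L\sqrt{\phi_2(\alpha_L)})$ contributes a relative $O(1/\ubar)$ to $G''(0)$, using the standard size estimates $\phi_2\asymp(\log x)(\log y)\max\{1,(\log x)/y\}$ and $\phi_j\ll\phi_2(\log y)^{j-2}$ for the higher moments. The key estimate is $|G'''|\ll \phi_2^{-3/2}$ on the relevant range; one expects $\phi_3/\phi_2^{3}\ll\phi_2^{-3/2}$ in all ranges. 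Given this, the double integral equals
\[ -\frac{AB}{\phi_2}\Bigl(1+O\Bigl(\frac1{\ubar}+\frac{A+B}{\phi_2^{1/2}}\Bigr)\Bigr). \]
Also $|G(A)|,|G(B)|\ll A^2/\phi_2+\dots$.

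Finally I would expand
\[ R(ab)-R(a)R(b)=e^{G(A)+G(B)}\bigl(e^{D}-1\bigr)+O(\additive+\additive^2), \]
where $D:=G(A+B)-G(A)-G(B)$. The $\additive^2$ term comes from multiplying the two errors in $R(a)R(b)$, with $R\ll 1+\additive$ by the last part of Lemma~\ref{lem:ularge}. Next, $e^D-1=D(1+O(|D|))$, which produces the error term $\log a\log b/\phi_2$. Also $e^{G(A)+G(B)}=1+O((A^2+B^2)/\phi_2)$, and when $(A+B)/\phi_2^{1/2}\le1$ this is absorbed into the $\log(ab)/\phi_2^{1/2}$ term. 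When $(A+B)/\phi_2^{1/2}>1$ the claimed estimate is trivial, since both sides are $O((ab)^{-\alpha}(\text{error}+\additive))$ by Lemma~\ref{lem:ularge}.

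I expect the main obstacle to be the first step. One has to extract from \S\ref{sec:technical} a version of the saddle-point asymptotic whose error is uniformly $O(\additive)$, with every $X$-dependent term of size $1/u$ or $1/\ubar$ kept inside a smooth $F$ whose second and third derivatives in $\log X$ can be controlled. A secondary difficulty is proving the uniform bound $G'''\ll\phi_2^{-3/2}$ across the ranges $y\le\log x$ and $y>\log x$. I would handle this by writing $G'''$ in terms of $\phi_3(\alpha_L,y)/\phi_2(\alpha_L,y)^3$ plus prefactor derivatives, and then bounding $\phi_3$ via \eqref{eq:alphasize}.
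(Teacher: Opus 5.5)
There is a genuine gap, and it is exactly the step you postpone.

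Lemma~\ref{lem:ularge} is not proved through a representation $\Psi(X,y)=F(\log X)(1+O_\varepsilon(\additive))$ with a saddle point $\alpha_X$ that moves with $X$. Its proof keeps the contour on the fixed segment $\Re s=\alpha(x,y)$, $|\Im s|\le T_0$, for every $d$, and Taylor-expands $d^{-s}-d^{-\alpha}$ in $t=\Im s$. Nor can such an $F$ be assembled from known asymptotics:
\begin{itemize}
\item As $a\to1$, the right-hand side of the lemma tends to $O((ab)^{-\alpha}(\additive+\additive^2))$. So any error in your representation that you control only in size must be $O(\additive)$. The saddle-point formula \eqref{eq:psiacc} only gives $O(1/\ubar)$, and already for $a=b=2$ this is far larger than the target $(\log 2)^2/\phi_2$.
\item If you instead take $F$ to be the truncated Perron integral itself, your ``smooth correction factor'' is that integral divided by its leading term. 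Proving $G''=-\phi_2^{-1}(1+O(1/\ubar))$ and $G'''\ll\phi_2^{-3/2}$ for it is then the whole content of the lemma, not an input.
\item The multiplicative form $1+O(\additive)$ fails for small $X$: for $X\le y$, $\Psi(X,y)=\lfloor X\rfloor$ is a step function. Only the additive error $O((X/x)^{\alpha}\Psi(x,y)\additive)$ of \eqref{eq:psiperron5} is available.
\end{itemize}
Separately, your dismissal of the range $\log(ab)>\phi_2^{1/2}$ as trivial is wrong. Fix $\log b>\phi_2^{1/2}$ and let $\log a\to0$. The part of the right-hand side not involving $\additive$ is a multiple of $\log a$ and tends to $0$. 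But Lemma~\ref{lem:ularge}, which controls each $R(d)$ separately, gives no bound on $R(ab)-R(a)R(b)$ that tends to $0$ with $\log a$.

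The idea you are missing is to never compare different saddle points. The paper writes all four values $\Psi(x/d,y)$, $d\in\{1,a,b,ab\}$, as Perron integrals over the same segment $I=\{\alpha+it:|t|\le T_0\}$, each with error $O_\varepsilon(d^{-\alpha}\Psi(x,y)\additive)$. Multiplying in pairs expresses $\Psi(x,y)\Psi(x/(ab),y)-\Psi(x/a,y)\Psi(x/b,y)$ as a double integral over $I\times I$ with kernel $(ab)^{\alpha-s_2}-a^{\alpha-s_1}b^{\alpha-s_2}$. Symmetrizing in $s_1\leftrightarrow s_2$ replaces this kernel by
\[
\tfrac12(a^{\alpha-s_1}-a^{\alpha-s_2})(b^{\alpha-s_1}-b^{\alpha-s_2})=-\tfrac12\log a\log b\,(t_1-t_2)^2+(\text{errors that are again multiples of }\log a\log b).
\]
Now insert the expansion \eqref{eq:tay} of $\zeta(s,y)x^s/s$. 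The odd terms vanish, and $\iint e^{-(t_1^2+t_2^2)\sigma_2/2}(t_1-t_2)^2\diff{t_1}\diff{t_2}=4\pi\sigma_2^{-2}$ up to a negligible truncation error. Dividing by $\Psi(x,y)^2$ via \eqref{eq:psiacc} gives $-\log a\log b/\phi_2$ with exactly the stated relative errors, uniformly in $a,b$. There is no case split and no need for derivatives of $\log\Psi(X,y)$ in $X$.

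Your heuristic is right: the answer is the mixed second difference of $\log\Psi(x/d,y)$, whose second derivative is about $-1/\phi_2$. The symmetrized double integral is what makes this rigorous down to the level $\additive$.
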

Note that $\phi_2(s,y)$ is the second partial derivative of $\log \zeta(s,y)$ with respect to $s$. Hildebrand and Tenenbaum \cite[Thm.~2]{HildebrandTenenbaum1986}  proved that 
\begin{equation}\label{eq:phi_2_est}
\phi_2(\alpha,y)=\bigg(1+\frac{\log x}{y}\bigg)(\log x)(\log y ) \bigg(1+O\bigg(\frac{1}{\log(\ubar+1)}\bigg)\bigg)
\end{equation}
holds in $x \ge y \ge 2$. Lemma~\ref{lem:psipsi} is proved in \S\ref{sec:technical}. We proceed to prove Corollary \ref{cor:special}. Let $2\le t \le y \le x$. We write $\omega_t(n)$ as a sum of indicators $\mathbf{1}_{p \mid n}$ to find that
	$\omega_t(N_{x,y}) - \EE(\omega_t(N_{x,y}))=\sum_{p\le t} X_p$ holds for 
	\[ X_p :=  \mathbf{1}_{p \mid N_{x,y}} - \EE \mathbf{1}_{p \mid N_{x,y}} = \mathbf{1}_{p \mid N_{x,y}} - \frac{\Psi(x/p,y)}{\Psi(x,y)}.\]
	Using \eqref{eq:state2more4},
	\begin{multline*}
	\sum_{p\le t}	\EE X_p^2 = \sum_{p\le t} \frac{\Psi(x/p,y)}{\Psi(x,y)} - \left(\frac{\Psi(x/p,y)}{\Psi(x,y)}\right)^2\\
		= \sum_{p \le t} p^{-\alpha} \left( 1 + O\left(\frac{\log p}{\log x}\right)\right) \left( 1 - p^{-\alpha} + O\left( p^{-\alpha} \frac{\log p}{\log x} \right)\right)= (1+o(1))\sum_{p \le t} p^{-\alpha}(1-p^{-\alpha})
	\end{multline*}
	as $\min\{y,u\}\to \infty$. Since $V_t(x,y) = \sum_{p \le t} \EE X_p^2 + 2 \sum_{p < q \le t} \EE X_p X_q$, it remains to show $\sum_{p < q \le t} \EE X_p X_q \le o(1)\sum_{p\le t}p^{-\alpha}(1-p^{-\alpha})$ as $\min\{y,u\} \to \infty$. The rest of the proof is divided into three cases.
	\subsection{Range 1: $(\log \log x)^{3/4} \le \log y \le c\log x/(\log \log x)^3$}
	In this range, by Lemma \ref{lem:psipsi}, we have
		\[ \frac{\Psi(x/(pq),y)}{\Psi(x,y)} -\frac{\Psi(x/p,y)\Psi(x/q,y)}{\Psi(x,y)^2} = -\left( 1+ O\left(\frac{1}{\ubar} + \frac{\log (pq)}{\phi_2^{1/2}} \right)\right) \frac{\log p \log q}{ (pq)^{\alpha}\phi_2}< 0\]
		when $p <q \le y$ and $x$ is sufficiently large. Hence, $\EE X_p X_q < 0$, concluding the proof.
		\subsection{Range 2: $\log y \le (\log \log x)^{3/4}$}
		In this range,
		$\EE X_p X_q \ll \exp(-(\log y)^{5/4})/((pq)^{\alpha}\log x)$ holds by \eqref{eq:state2more22} for $p<q \le y$.  Hence,
		\[ \sum_{p <q \le t} \EE X_p X_q \ll  \frac{( \sum_{p \le t} p^{-\alpha})^2}{ \exp((\log y)^{5/4})\log x}. \]
		By \eqref{eq:alphasize}, $\alpha \asymp y/(\log x \log y)$ holds when $y  \le \log x$, implying $\sum_{p \le t}p^{-\alpha} \asymp t/\log t$ and $\sum_{p \le t}p^{-\alpha}(1-p^{-\alpha}) \asymp \alpha t$ when $t \le y \le \log x$. In particular,
		\[ \sum_{p <q \le t} \EE X_p X_q \ll \frac{1}{\exp((\log y)^{5/4})} \sum_{p \le t}p^{-\alpha}(1-p^{-\alpha}),  \]
which is sufficiently small if $y\to \infty$.
		\subsection{Range 3: $\log y \ge c \log x/(\log \log x)^3$ and $u \ge C$}
		In this range, we argue as in Alladi \cite{AlladiTuran}. Alladi proved that $\log \rho(t)$ is concave once $t$ is large enough \cite[Lem.~4]{AlladiTuran}. Recall de Bruijn showed $\Psi(x,y) = x\rho(u)(1+O(\log (u+1)/\log y))$ holds in the considered range \cite{db}. Given a prime $p$, let $a_p := \log p /\log y$. If $p<q\le y$ and $u$ is sufficiently large then de Bruijn's result shows that in our range, $\EE X_p X_q$ is
		\begin{align*}
			&= \frac{1}{pq} \frac{\rho(u-a_p-a_q)}{\rho(u)}\left(1+O\left(\frac{\log(u+1)}{\log y}\right)\right) - \frac{1}{pq}\frac{\rho(u-a_p)\rho(u-a_q)}{\rho(u)^2}\left(1+O\left(\frac{\log(u+1)}{\log y}\right)\right) \\
			&\le C \frac{\log(u+1)}{\log y}\frac{1}{pq}  \frac{\rho(u-a_p)\rho(u-a_q)}{\rho(u)^2}
		\end{align*}
		by using  concavity of $\log \rho$. Define the function $\xi\ge 0$ via $e^{\xi(u)}=1+u\xi(u)$. Since $\rho(u-t)/\rho(u) \ll e^{t\xi(u)}$ \cite[Cor.~2.4]{HT1993}, it follows that
		\[ \sum_{p<q \le t}\EE X_p X_q \le C \frac{\log(u+1)}{\log y} \big(\sum_{p \le t} p^{-\beta}\big)^2\]
		for
		\begin{equation}\label{eq:betadef}
			\beta=\beta(x,y):=1-\frac{\xi(u)}{\log y}.
		\end{equation}
		It is known that \cite[Eq.~(7.8)]{HildebrandTenenbaum1986}
		\begin{equation}\label{eq:betaalpha}
			\beta= \alpha +O_{\varepsilon}\left(\frac{1}{(\log x)( \log y)} + \exp(-(\log y)^{3/5-\varepsilon})\right)
		\end{equation}
		holds uniformly for $x \ge y \ge (\log x)^{1+\varepsilon}$. To conclude, we observe that $\sum_{p\le t}p^{-\beta} \sim \sum_{p \le t}p^{-\alpha} \asymp \sum_{p\le t} p^{-\alpha}(1-p^{-\alpha})$ and $\sum_{p \le y}p^{-\alpha} \ll \log \log y + u$ hold in our range using \eqref{eq:betaalpha}
		and \cite[Lem.~3.6]{DLBT20052}.
		\begin{rem}
			Suppose $y=o(\sqrt{\log x \log \log x})$. The probability that $p \nmid N_{x,y}$ is $\ll \alpha \log p + \log p/ \log x$ by \eqref{eq:imp}. These probabilities sum to $o(1)$ as we range over $p\le y$. Thus, almost all elements of $S(x,y)$ are divisible by $\prod_{p \le y}p$.
		\end{rem}
\section{More on square-free smooth numbers}\label{sec:moresfree}
Let $\vartheta(y):=\sum_{p \le y} \log p \sim y$. For $\log x\ge \vartheta(y)$, we have $\Psi_{\mu^2}(x,y)=2^{\pi(y)}$ because a $y$-smooth square-free integer is of size $\le \prod_{p \le y}p$. In view of Corollary \ref{cor:sfinto}, we study $\Psi_{\mu^2}(x,y)/\Psi(x,y)$ only for $y$ satisfying $y \le (\log x)^3$ and $\vartheta(y)>\log x$.

De Bruijn \cite{debruijn1966} (resp.~Granville \cite{Granville}) estimated $\log \Psi(x,y)$ (resp.~$\log \Psi_{\mu^2}(x,y)$) asymptotically as $y \to \infty$. Let $v:=y/\log x$ and fix $A>0$. Subtracting their estimates, we find that if $v \in [2,A]$, then
\[\log \frac{\Psi_{\mu^2}(x,y)}{\Psi(x,y)} \sim \frac{\log x}{\log \log x}	\left( (v-1)\log\left(1 + \frac{1}{v-1}\right) - (v+1) \log \left(1+\frac{1}{v}\right)\right)\]
holds as $y \to \infty$ and  if $v \in [A^{-1},2]$, then
\[\log \frac{\Psi_{\mu^2}(x,y)}{\Psi(x,y)} \sim -\frac{\log x}{\log \log x}	\left( \log(1+v)+v\left(\log\left(1+\frac{1}{v}\right)-\log 2\right)\right)\]
holds as $y \to \infty$.  From now on, we assume further $y\ge 3\log x$.

Naimi \cite{Naimi} studied $\Psi_{\mu^2}(x,y)$ via the saddle point method tailored to the Dirichlet series 
\[\sum_{n \text{ is }y\text{-smooth}} \frac{\mu^2(n)}{n^s} =\prod_{p\le y}(1+p^{-s})= \frac{\zeta(s,y)}{\zeta(2s,y)}.\]
The relevant saddle point, denoted  $\alpha_{\mu^2}=\alpha_{\mu^2}(x,y)>0$, can be defined via 
\[ \sum_{p \le y} \frac{\log p}{p^{\alpha_{\mu^2}}+1}=\log x.\]
Let $\phi_{\mu^2}(s,y)= \log (\zeta(s,y)/\zeta(2s,y))$ and $\sigma_{2,\mu^2}=\phi_{\mu^2}^{(2)}(\alpha_{\mu^2},y)$ where the derivative is taken with respect to the first variable. Then Theorem 1 of \cite{Naimi} says that 
\begin{equation}\label{eq:naimi} \Psi_{\mu^2}(x,y) = \frac{\zeta(\alpha_{\mu^2},y)x^{\alpha_{\mu^2}}}{\zeta(2\alpha_{\mu^2},y)\alpha_{\mu^2}\sqrt{2 \pi \sigma_{2,\mu^2}}} (1+O_{\varepsilon}(1/u))
\end{equation}
holds in  $ x\ge y \ge (\log x)^{1+\varepsilon}$. In \cite{TenenbaumSfree}, La Bret\`eche and Tenenbaum pushed Naimi's analysis down to $\vartheta(y) > 2 \log x$. Their results can be applied in the complementary range  $2\log x\ge \vartheta(y)>\log x$ using the relation $\Psi_{\mu^2}(x,y) + \Psi_{\mu^2}(\prod_{p \le y}p/x,y) \in \{ 2^{\pi(y)}, 2^{\pi(y)}+1\}$. Theorem 2.1 of \cite{TenenbaumSfree} says
\begin{equation}\label{eq:TenenbaumSfree} \Psi_{\mu^2}(x,y) = \frac{\zeta(\alpha_{\mu^2},y)x^{\alpha_{\mu^2}} }{\zeta(2\alpha_{\mu^2},y)}G(\alpha_{\mu^2}\sqrt{\sigma_{2,\mu^2}})(1+O(1/u)) 
\end{equation}
holds for $\vartheta(y)>2\log x$, where
\[G(r):=e^{r^2/2}\int_{r}^{\infty}\frac{e^{-t^2/2}\diff{t}}{\sqrt{2\pi}}.\]
For $r \gg 1$,
\begin{equation}\label{eq:Gasymp}
G(r) = \frac{1-r^{-2}+O(r^{-4})}{\sqrt{2\pi}r}.
\end{equation}  
\begin{prop}\label{prop:sfree}
	Suppose $\log^3 x\ge y \ge 3\log x$. For $t>0$, let $f(t):=(t-1)/\log t$  if $t \neq 1$, and $f(1):=1$.	Then 
	\[ \frac{\Psi_{\mu^2}(x,y)}{\Psi(x,y)} = \frac{1}{\zeta(2\alpha,y)}\exp(-A_1+B_1)=\frac{1}{\zeta(2\alpha_{\mu^2},y)}\exp(A_2+B_2)\]
	where $A_i>0$, $B_i \in \mathbb{R}$ are quantities satisfying \begin{equation}\label{eq:ACsize}
		A_i \asymp  2\frac{f^2\left( \frac{(u\log(u+1))^2}{y}\right)}{u}, \qquad B_i \ll \frac{1}{u}+\frac{f\left( \frac{(u\log(u+1))^2}{y}\right)}{u}\quad (i=1,2).
	\end{equation}
	If both $y/\log x \to \infty$ and  $y \le \log^{2+o(1)}x$ hold, then we may replace the  ``$\asymp$'' sign in \eqref{eq:ACsize} with ``$\sim$''.
\end{prop}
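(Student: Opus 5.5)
We compare the two saddle-point formulas directly: Naimi's estimate \eqref{eq:naimi} (or \eqref{eq:TenenbaumSfree}, which covers the whole range $\vartheta(y)>\log x$ via the complementary relation) for $\Psi_{\mu^2}$, and the Hildebrand--Tenenbaum saddle-point formula $\Psi(x,y)=\zeta(\alpha,y)x^{\alpha}(1+O(1/u))/(\alpha\sqrt{2\pi\phi_2})$ for $\Psi$.

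Set $\phi(s)=\log\zeta(s,y)$ and $\phi_{\mu^2}(s)=\phi(s)-\phi(2s)$. Taking the ratio gives
\[
\frac{\Psi_{\mu^2}(x,y)}{\Psi(x,y)}=\frac{1}{\zeta(2\alpha_{\mu^2},y)}\exp\big(E\big)\cdot\frac{\alpha\sqrt{\phi_2}}{\alpha_{\mu^2}\sqrt{\sigma_{2,\mu^2}}}\,\big(1+O(1/u)\big),
\]
where
\[
E:=\big[\phi(\alpha_{\mu^2})-\alpha_{\mu^2}\phi'(\alpha_{\mu^2})\big]-\big[\phi(\alpha)-\alpha\phi'(\alpha)\big],
\]
using $-\phi'(\alpha)=\log x$. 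When $r=\alpha_{\mu^2}\sqrt{\sigma_{2,\mu^2}}$ is bounded, we use $G$ instead of its asymptotic \eqref{eq:Gasymp}.

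Since $\alpha_{\mu^2}$ minimizes $s\mapsto \phi_{\mu^2}(s)+s\log x$, it lies above $\alpha$. Write $h=\alpha_{\mu^2}-\alpha$. A Taylor expansion shows that $E=-\int_\alpha^{\alpha_{\mu^2}}s\,\phi''(s)\,ds$ is of size about $-\alpha h\phi_2$. The defining equations give
\[
h\,\phi_2\approx -2\phi'(2\alpha)=2\sum_{p\le y}\frac{\log p}{p^{2\alpha}-1}.
\]
This is where $f$ appears. In the range $3\log x\le y\le\log^3 x$, \eqref{eq:alphasize} gives $y^{1-\alpha}\asymp u\log(u+1)$, so $y^{1-2\alpha}\asymp (u\log(u+1))^2/y$. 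By the prime number theorem, $\sum_{p\le y}\log p\,p^{-2\alpha}\approx \frac{y^{1-2\alpha}-1}{1-2\alpha}$, which is $\log y\cdot f\big((u\log(u+1))^2/y\big)$ up to constants. Combining this with \eqref{eq:phi_2_est}, i.e.\ $\phi_2\asymp (\log x)(\log y)$ in this range, gives $\alpha h^2\phi_2\asymp f^2/u$. This is exactly the claimed $A_2$.

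The normalising factor $\alpha\sqrt{\phi_2}/(\alpha_{\mu^2}\sqrt{\sigma_{2,\mu^2}})$, together with the second-order corrections, is $\exp(O(1/u+f/u))$. This holds because $\sigma_{2,\mu^2}-\phi_2=O(\phi''(2\alpha))$, and $\phi''(2\alpha)/\phi_2$ is controlled by the same prime sum; these terms make up $B_2$.

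For the form involving $\zeta(2\alpha,y)$, we note that
\[
\log\zeta(2\alpha_{\mu^2},y)-\log\zeta(2\alpha,y)=2\phi'(2\alpha)\,h+O(h^2\phi''(2\alpha)).
\]
Since $2\phi'(2\alpha)h\approx -h^2\phi_2$, this gives $A_1=h^2\phi_2-A_2$, with the linear term roughly doubling. So $A_1\asymp f^2/u$ as well, with constant $2$ in the regime where the constants become asymptotic.

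For the ``$\sim$'' refinement, note that when $y/\log x\to\infty$ we have $\alpha\to1/2$ scaled appropriately, the $\log\log$ errors in \eqref{eq:alphasize} are negligible, and PNT error terms give $(1+o(1))$ factors.

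The main obstacle is making the Taylor expansion rigorous with precise constants. We need $h$ small enough that $\phi''$ is essentially constant on $[\alpha,\alpha_{\mu^2}]$, and we need the sum $\sum_p\log p/(p^{2\alpha}-1)$ evaluated with relative error $o(1)$ uniformly across $1-2\alpha$ of either sign. That is where the function $f$, and its behaviour near $t=1$, must be handled carefully.
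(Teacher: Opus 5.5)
\textbf{The exponent $E$ is wrong, so the main term $A_2$ is never actually derived.}

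To write $E=[\phi(\alpha_{\mu^2})-\alpha_{\mu^2}\phi'(\alpha_{\mu^2})]-[\phi(\alpha)-\alpha\phi'(\alpha)]$ you replaced $\alpha_{\mu^2}\log x$ by $-\alpha_{\mu^2}\phi'(\alpha_{\mu^2})$. But $\alpha_{\mu^2}$ is the saddle point of $\phi_{\mu^2}$, not of $\phi$. In fact $-\phi'(\alpha_{\mu^2})=\log x+2\sum_{p\le y}\log p/(p^{2\alpha_{\mu^2}}-1)$.

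Dividing \eqref{eq:TenenbaumSfree} by \eqref{eq:psiacc} actually produces the exponent $g(\alpha_{\mu^2})-g(\alpha)$, where $g(s)=\phi(s,y)+s\log x$. Your $E$ exceeds this by $2\alpha_{\mu^2}\sum_{p\le y}\log p/(p^{2\alpha_{\mu^2}}-1)\asymp \alpha f\log y$, writing $f$ for $f((u\log(u+1))^2/y)$. This extra term is linear in $h$. It is larger than $f^2/u$ by a factor $\gg \alpha\log y$, which is unbounded once $y/\log x\to\infty$, and it can never be absorbed into $B_2\ll (1+f)/u$.

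That is why you find $E\approx-\alpha h\phi_2$. Your next sentence, that $\alpha h^2\phi_2\asymp f^2/u$ ``is exactly $A_2$'', does not follow from this. It is also not the right quantity: $\alpha h^2\phi_2\sim 4\alpha f^2/u$ has the wrong constant unless $\alpha\to 1/2$, and the wrong order when $y\asymp\log x$, where $\alpha\asymp 1/\log y$.

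The missing observation is simply that $g'(\alpha)=0$. Hence $A_2=g(\alpha_{\mu^2})-g(\alpha)=\tfrac12 h^2\phi''(t)$ for some $t$ between the two saddle points. Combine this with $|h|\sim 2f/\log x$ and with $\phi''\sim\log x\log y$ \emph{uniformly on the interval between $\alpha_{\mu^2}$ and $\alpha$}. Note that \eqref{eq:phi_2_est} only gives this at $\alpha$. The result is $A_2\sim \tfrac12\cdot 2^2 f^2/u=2f^2/u$.

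You also have the direction backwards: $\alpha_{\mu^2}<\alpha$. The map $s\mapsto\sum_p\log p/(p^s+1)$ is decreasing and lies below $\sum_p\log p/(p^s-1)$. Being a minimizer says nothing about direction, and this is the source of the wrong sign in your relation $h\phi_2\approx-2\phi'(2\alpha)$.

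\textbf{The lower bound for $A_1$ does not follow when $y\asymp\log x$.} Fix the signs and use the exact identity $A_1=\phi(2\alpha_{\mu^2},y)-\phi(2\alpha,y)-A_2$ with your linearisation. This gives $A_1=h^2\phi''(t_1)-\tfrac12 h^2\phi''(t_2)+O(h^2\phi''(2t_3))$. That is adequate when $y/\log x\to\infty$. When $y\asymp\log x$, however, $\phi''$ varies by a constant factor across the interval, and $h^2\phi''(2t_3)/(h^2\phi_2)\ll f/u\asymp 1$. You are then subtracting two quantities of the same order, with errors of that same order, so $A_1\gg f^2/u$ cannot be concluded.

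The paper avoids this by Taylor-expanding $g_{\mu^2}(s)=\phi_{\mu^2}(s,y)+s\log x$ about its own critical point. This gives $A_1=g_{\mu^2}(\alpha)-g_{\mu^2}(\alpha_{\mu^2})=\tfrac12 h^2\phi''_{\mu^2}(t')$. It then proves $\phi''_{\mu^2}\asymp\log x\log y$ on the whole interval (Lemma~\ref{lem:phi}, where the nontrivial input is $\phi^{(2)}_{\mu^2}(\alpha,y)$).

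\textbf{Two smaller points.}
\begin{itemize}
\item \eqref{eq:alphasize} determines $y^{\alpha}$ only up to a factor $(\log y)^{O(1)}$, so it does not give $y^{1-\alpha}\asymp u\log(u+1)$. The paper uses \cite[Lem.~2.8]{TenenbaumSfree} at $\alpha_{\mu^2}$ instead.
\item In the ``$\sim$'' regime, $\alpha$ need not tend to $1/2$; for $y=\log^{3/2}x$ it tends to $1/3$. What the hypothesis $y\le\log^{2+o(1)}x$ provides is $f\log y\to\infty$. This makes the $O(1)$ term in the prime-sum evaluation negligible.
\end{itemize}
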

In particular, Proposition \ref{prop:sfree} implies Corollary~\ref{cor:sfreerange}. The proposition improves \cite[Cor.~2.6]{DLBT20052} and extends \cite[Thm.~2]{Naimi} in the considered range. To prove it, we need some lemmas. 
\begin{lem}\label{lem:phi}
	Suppose $\log^3 x \ge y \ge 3 \log x$ and $x \ge C$. For all $t$ between $\alpha$ and $\alpha_{\mu^2}$, we have 
	\[\phi^{(2)}(t,y) \asymp \phi^{(2)}_{\mu^2}(t,y) \asymp \log x \log y,\]
	where the derivative is taken with respect to the first variable. If furthermore $y/\log x \to \infty$, then we may replace the  ``$\asymp$'' signs with ``$\sim$''.
\end{lem}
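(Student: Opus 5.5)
Here $\phi(s,y)=\log\zeta(s,y)$, so
\[\phi^{(2)}(t,y)=\sum_{p\le y}\frac{p^{t}\log^2 p}{(p^{t}-1)^2},\qquad \phi_{\mu^2}^{(2)}(t,y)=\phi^{(2)}(t,y)-4\phi^{(2)}(2t,y).\]
Equivalently, $\phi_{\mu^2}^{(2)}(t,y)=\sum_{p\le y}p^{t}\log^2 p/(p^t+1)^2$, which follows from $\log(1+p^{-s})$ directly. The plan has three steps: locate $\alpha$ and $\alpha_{\mu^2}$, show $t\log y\asymp 1$ on the interval between them, and estimate both sums by partial summation against the prime number theorem.

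\emph{Locating the saddle points.} By \eqref{eq:alphasize}, for $3\log x\le y\le\log^3 x$ we have $\alpha=\log(1+v)/\log y\,(1+O(\log\log y/\log y))$ with $v=y/\log x\in[3,\log^2x]$. Hence $\alpha\log y\asymp\log(1+v)\ge\log 4$. For $\alpha_{\mu^2}$, I compare the defining equations. Since $1/(p^s+1)\le 1/(p^s-1)$, we get $\alpha_{\mu^2}\le\alpha$. For a lower bound, split primes according to whether $p^{s}\ge2$. For $s=c_0/\log y$ with $c_0$ a small absolute constant, $\sum_{p\le y}\log p/(p^s+1)\gg \vartheta(y)\ge 3\log x(1+o(1))>\log x$. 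It remains to check that the sum exceeds $\log x$ there, and this is the delicate point. For $s=c_0/\log y$ we have $p^s\le e^{c_0}$, so the sum is at least $\vartheta(y)/(e^{c_0}+1)\approx\vartheta(y)/2\ge(3/2)\log x>\log x$ when $c_0$ is small. Monotonicity then gives $\alpha_{\mu^2}\ge c_0/\log y$. Thus every $t$ between $\alpha_{\mu^2}$ and $\alpha$ satisfies $c_0\le t\log y\le C\log(1+v)$. When $v\to\infty$, the same comparison gives $\alpha_{\mu^2}\log y\sim\alpha\log y\sim\log v$. Indeed, in that regime $p^{t}\to\infty$ for all but a negligible set of $p$, so $1/(p^t\pm1)$ differ by a factor $1+o(1)$ on the dominant range, and one solves the equation to leading order.

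\emph{Estimating the second derivatives.} Write $t=\tau/\log y$ with $\tau\in[c_0,C\log(1+v)]$ and use partial summation with the prime number theorem. Setting $p=y^{w}$, each sum becomes
\[\phi^{(2)}(t,y)\approx y\log y\int_0^1 \frac{e^{\tau w}}{(e^{\tau w}-1)^2}\,y^{w-1}\diff{w}\cdot(\text{const}),\]
which is dominated by $p$ near $y$. The result is $\phi^{(2)}(t,y)\asymp y^{1-t}\log y\,\min\{1,\ 1/(1-y^{-t})^{2}\}$, and since $\tau\ge c_0$ this is $\asymp y^{1-t}\log y$. The $+1$ version is $\asymp y^{1-t}\log y$ as well, since $p^t/(p^t+1)^2\asymp p^{-t}$ once $p^t\ge1$. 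It remains to show $y^{1-t}\asymp\log x$ uniformly on the interval. At $t=\alpha$ this follows from the defining equation together with the same partial summation: $\log x=\sum\log p/(p^\alpha-1)\asymp y^{1-\alpha}$. At $t=\alpha_{\mu^2}$ it follows likewise from its own equation. In between, $y^{1-t}$ is monotone in $t$, so it is sandwiched between the two endpoint values, both $\asymp\log x$. This gives the $\asymp$ statement.

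\emph{The case $v\to\infty$.} Here the asymptotic statement can be checked against \eqref{eq:phi_2_est} at $t=\alpha$, which gives $\phi_2\sim\log x\log y$. To move from $\alpha$ to nearby $t$ and to the $\mu^2$-variant, observe that the ratio of summands $p^t(p^t-1)^{-2}$ to $p^t(p^t+1)^{-2}$ is $1+O(p^{-t})$. This is $1+o(1)$ for the dominant primes ($p\ge y^{1/2}$, where $p^{-t}\le v^{-1/2+o(1)}$). Also, $\alpha-\alpha_{\mu^2}=o(1/\log y)$, so $y^{\alpha-t}=1+o(1)$. The main obstacle is making the saddle-point comparison uniform, meaning $\alpha_{\mu^2}\log y\gg1$ and $\alpha-\alpha_{\mu^2}=o(1/\log y)$ when $v\to\infty$. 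I would handle it via a mean value bound: $(\alpha-\alpha_{\mu^2})\cdot\phi^{(2)}$-type derivative $\asymp$ the difference of the two sums, which is $\ll\sum\log p\,p^{-2\alpha}\ll y^{1-2\alpha}$. This makes $(\alpha-\alpha_{\mu^2})\log y\ll y^{-\alpha}\to0$.
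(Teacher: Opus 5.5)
Your argument is correct in substance, but it takes a different route from the paper.

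\emph{The paper's route.} The paper does not analyse the interval between $\alpha_{\mu^2}$ and $\alpha$ directly. It observes that $\phi^{(2)}(t,y)$ and $\phi^{(2)}_{\mu^2}(t,y)$ are decreasing in $t$, which reduces everything to four endpoint values. Three of them ($\phi^{(2)}(\alpha,y)$, $\phi^{(2)}_{\mu^2}(\alpha_{\mu^2},y)$, $\phi^{(2)}(\alpha_{\mu^2},y)$) are imported from Hildebrand--Tenenbaum (Thm.~2, Lem.~13) and La Bret\`eche--Tenenbaum (Lems.~2.8--2.9). The fourth, $\phi^{(2)}_{\mu^2}(\alpha,y)$, is computed directly when $y\asymp\log x$. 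When $y/\log x\to\infty$ it uses $\phi^{(2)}_{\mu^2}(\alpha,y)=\phi^{(2)}(\alpha,y)-4\phi^{(2)}(2\alpha,y)$ with $\phi^{(2)}(2\alpha,y)=o(\log x\log y)$.

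\emph{Your route.} You prove the uniform estimate $\phi^{(2)}(t,y)\asymp\phi^{(2)}_{\mu^2}(t,y)\asymp y^{1-t}\log y$ by partial summation. You then read off $y^{1-t}\asymp\log x$ at both endpoints from the two saddle-point equations, and apply monotonicity to $y^{1-t}$ rather than to $\phi^{(2)}$. For the asymptotic version, you move \eqref{eq:phi_2_est} away from $\alpha$ using $\alpha-\alpha_{\mu^2}=o(1/\log y)$ and a summand-by-summand comparison on $p\ge y^{1/2}$.

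\emph{What each buys.} Your route is self-contained: it uses only the prime number theorem, \eqref{eq:alphasize} and \eqref{eq:phi_2_est}. It also yields $y^{\alpha_{\mu^2}}\asymp y^{\alpha}\asymp y/\log x$ as a by-product, which is the clean justification of your heuristic claim $\alpha_{\mu^2}\log y\sim\log v$. The paper's proof is shorter because it outsources the endpoint estimates. Your mean-value step is essentially Lemma~\ref{lem:alphadiff}. Using it here is not circular, since you feed it only the $\asymp$ part you have already proved.

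Two points need repair.

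\emph{The mean-value bound.} Take $s\in\{\alpha,\alpha_{\mu^2}\}$. The bound $\sum_{p\le y}\log p/(p^{2s}-1)\ll y^{1-2s}$, and hence $(\alpha-\alpha_{\mu^2})\log y\ll y^{-\alpha}$, is false once $2s$ is near or above $1$, i.e.\ for $y$ around $\log^2x$ and beyond. At $y=\log^3x$ the term $p=2$ alone is $\gg1$, while $y^{1-2\alpha}\asymp1/\log x$. Indeed, by Lemmas~\ref{lem:alphadiff}--\ref{lem:psum2}, $(\alpha-\alpha_{\mu^2})\log y\asymp1/\log x$ there, not $\ll y^{-\alpha}\asymp1/\log^2x$. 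Partial summation actually gives the bound $\ll 2^{1/(2s)}+\log y+y^{1-2s}\min\{\log y,(1-2s)^{-1}\}$, where the last term is present only when $2s<1$. Divide by $\log x$ and use $y^{1-2s}\asymp(\log x)/v$; this is still $o(1)$:
\begin{itemize}
\item for $2s\le1/2$ the minimum is $\le2$, so that term is $\ll 1/v$;
\item for $2s>1/2$ one has $v\asymp y^{s}\ge y^{1/4}$, so that term is $\ll \log y/v$;
\item $2^{1/(2s)}=y^{o(1)}$ because $s\log y\to\infty$, and $\log y/\log x\to0$.
\end{itemize}
So your conclusion survives.

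\emph{Keeping $t$ away from $1$.} Your two-sided partial-summation estimates need $1-t\gg1$ as well as $t\log y\ge c_0$. Otherwise $\sum_{p\le y}\log p\,p^{-t}$ has size $y^{1-t}/(1-t)$ rather than $y^{1-t}$. This does hold, because $\alpha\le 2/3+o(1)$ for $y\le\log^3x$ by \eqref{eq:alphasize}. It does not, however, follow from $t\log y\le C\log(1+v)$ with an unspecified $C$.

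\emph{Minor.} Your displayed heuristic integral is missing a factor $w\log y$. The factor $\min\{1,(1-y^{-t})^{-2}\}$ should simply be $(1-e^{-\tau})^{-2}\asymp1$. The stated conclusion $\phi^{(2)}(t,y)\asymp y^{1-t}\log y$ is nevertheless right.
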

\begin{proof}
	Recall $\phi(t,y)= \log \zeta(t,y)$ and $\phi_{\mu^2}(t,y) = \log (\zeta(t,y)/\zeta(2t,y))$.
	We have
	\[ \phi^{(2)}(t,y) = \sum_{p \le y} \frac{p^t\log^2p}{(p^t-1)^2}, 	\qquad \phi^{(2)}_{\mu^2}(t,y) = \sum_{p \le y} \frac{p^t \log^2 p}{(p^t+1)^2}.\]
	In particular, $\phi^{(2)}(t,y)$ and $\phi_{\mu^2}^{(2)}(t,y)$ are decreasing for $t>0$, so it suffices to study them at $t=\alpha,\alpha_{\mu^2}$. By \cite[Thm.~2]{HildebrandTenenbaum1986} and \cite[Lem.~2.9]{TenenbaumSfree}, $\phi^{(2)}(\alpha,y) \asymp \phi^{(2)}_{\mu^2}(\alpha_{\mu^2},y)\asymp \log x \log y$, and if $y/\log x \to \infty$, then ``$\asymp$'' can be replaced with ``$\sim$''. It remains to understand $\phi^{(2)}_{\mu^2}(\alpha,y)$ and $\phi^{(2)}(\alpha_{\mu^2},y)$.
	
	Lemma 13 of \cite{HildebrandTenenbaum1986} estimates $\phi^{(2)}(t,y)$, which together with the estimates for $\alpha_{\mu^2}$ given in \cite[Lem.~2.8]{TenenbaumSfree} implies that  $\phi^{(2)}(\alpha_{\mu^2},y) \asymp \log x \log y$, and that if $y/\log x \to \infty$, then ``$\asymp$'' can be replaced with ``$\sim$''. It remains to estimate $\phi^{(2)}_{\mu^2}(\alpha,y)$. If $y\asymp \log x$, then $\alpha \asymp 1/\log y$ \cite[Lem.~2.8]{TenenbaumSfree} and $\phi_{\mu^2}^{(2)}(\alpha,y) \asymp \sum_{p \le y} \log^2 p \asymp y \log y \asymp \log x \log y$. If $y/\log x \to \infty$, we use the identity $\phi_{\mu^2}^{(2)}(\alpha,y)=\phi^{(2)}(\alpha,y)-4\phi^{(2)}(2\alpha,y)$ together with the estimates for $\phi^{(2)}(t,y)$  and $\alpha$ in \cite{HildebrandTenenbaum1986} to deduce $\phi^{(2)}(2\alpha,y)=o(\log x \log y)$ and so $\phi_{\mu^2}^{(2)}(\alpha,y)\sim \log x \log y$.
\end{proof}	

\begin{lem}\label{lem:alphadiff}
	Suppose $\log^3 x \ge y \ge 3\log x$ and $x \ge C$. Then 
	\[\alpha-\alpha_{\mu^2} \asymp \frac{2}{\log x \log y} \sum_{p \le y} \frac{\log p}{p^{2\alpha_{\mu^2}}-1}.\]
	If furthermore $y/\log x \to \infty$, then we may replace the  ``$\asymp$'' sign with ``$\sim$''.
\end{lem}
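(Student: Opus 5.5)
Subtracting the two saddle-point equations and applying the mean value theorem gives the claim almost directly. Define $\phi^{(1)}(s,y)=-\sum_{p\le y}\log p/(p^{s}-1)$ and $\phi^{(1)}_{\mu^2}(s,y)=-\sum_{p\le y}\log p/(p^s+1)$. By definition, $-\phi^{(1)}(\alpha,y)=\log x=-\phi^{(1)}_{\mu^2}(\alpha_{\mu^2},y)$. For every $s>0$ we have the identity
\[ \frac{1}{p^s-1}-\frac{1}{p^s+1}=\frac{2}{p^{2s}-1}, \]
so
\[ -\phi^{(1)}(\alpha_{\mu^2},y) = \log x + 2\sum_{p\le y}\frac{\log p}{p^{2\alpha_{\mu^2}}-1}. \]
This shows $-\phi^{(1)}(\alpha_{\mu^2},y)>-\phi^{(1)}(\alpha,y)$. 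Since $-\phi^{(1)}(\cdot,y)$ is decreasing, it follows that $\alpha_{\mu^2}<\alpha$.

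Next, apply the mean value theorem to $\phi^{(1)}(\cdot,y)$ on $[\alpha_{\mu^2},\alpha]$. This gives some $t$ between $\alpha_{\mu^2}$ and $\alpha$ with
\[ \phi^{(1)}(\alpha,y)-\phi^{(1)}(\alpha_{\mu^2},y)=(\alpha-\alpha_{\mu^2})\,\phi^{(2)}(t,y). \]
The left side equals $2\sum_{p\le y}\log p/(p^{2\alpha_{\mu^2}}-1)$. Hence
\[ \alpha-\alpha_{\mu^2} = \frac{2}{\phi^{(2)}(t,y)}\sum_{p\le y}\frac{\log p}{p^{2\alpha_{\mu^2}}-1}. \]
Lemma~\ref{lem:phi} gives $\phi^{(2)}(t,y)\asymp \log x\log y$ uniformly for $t$ between $\alpha$ and $\alpha_{\mu^2}$, and $\phi^{(2)}(t,y)\sim\log x\log y$ when $y/\log x\to\infty$. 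Substituting this yields both the $\asymp$ and the $\sim$ versions of the statement.

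No step here is a real obstacle: the only analytic input is the uniform control of $\phi^{(2)}$ on the whole interval between the two saddle points, and Lemma~\ref{lem:phi} provides exactly that.
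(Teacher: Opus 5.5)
Your proposal is correct and is essentially the paper's proof. You subtract the two saddle-point equations using $1/(p^s-1)-1/(p^s+1)=2/(p^{2s}-1)$, apply the mean value theorem to $\phi^{(1)}(\cdot,y)$, and invoke Lemma~\ref{lem:phi} on the interval between $\alpha_{\mu^2}$ and $\alpha$, with your explicit check that $\alpha_{\mu^2}<\alpha$ being a small extra that the paper uses only implicitly (in Lemma~\ref{lem:phidiff}).
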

\begin{proof}
	Comparing the relations $\sum_{p \le y} \log p/(p^{\alpha}-1)=\log x$ and $\sum_{p \le y} \log p/(p^{\alpha_{\mu^2}}+1)=\log x$, we find
	\[\sum_{p \le y} \frac{\log p}{p^{\alpha_{\mu^2}}-1}- \sum_{p \le y} \frac{\log p}{p^{\alpha}-1}  = 2 \sum_{p \le y} \frac{\log p}{p^{2\alpha_{\mu^2}}-1}.\]
	By the mean value theorem, there is a real number $t_{x,y}$ between $\alpha$ and $\alpha_{\mu^2}$ such that
	\[ (\alpha-\alpha_{\mu^2})\phi^{(2)}(t_{x,y},y)  =2 \sum_{p \le y} \frac{\log p}{p^{2\alpha_{\mu^2}}-1}.\]
	We conclude by Lemma \ref{lem:phi}.
\end{proof}

\begin{lem}\label{lem:psum2}
	Suppose $\log^3 x \ge y \ge 3\log x$ and $x \ge C$. Then, in the notation of Proposition \ref{prop:sfree},
	\begin{equation}\label{eq:psum2}
		\sum_{p \le y} \frac{\log p}{p^{2\alpha_{\mu^2}}-1} \asymp f\left( \frac{ (u\log (u+1))^2}{y}\right)\log y. 
	\end{equation}
	If furthermore $y/\log x \to \infty$ and $y\le \log^{2+o(1)}x$, then we may replace  ``$\asymp$''  with ``$\sim$''.
\end{lem}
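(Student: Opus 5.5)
We need to estimate $S:=\sum_{p\le y}\log p/(p^{2\alpha_{\mu^2}}-1)$. The plan is to approximate it by an integral over $t\le y$ via the prime number theorem and then evaluate that integral in closed form.

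\textbf{Step 1: size of $\alpha_{\mu^2}$.} In the range $3\log x\le y\le \log^3 x$, \cite[Lem.~2.8]{TenenbaumSfree} (or directly \eqref{eq:alphasize} together with Lemma~\ref{lem:alphadiff} heuristics) gives
\[
\alpha_{\mu^2}=\frac{\log(1+y/\log x)}{\log y}\Bigl(1+O\Bigl(\frac{1}{\log y}\Bigr)\Bigr).
\]
Here $\log y\asymp \log\log x$ and $y/\log x\in[3,\log^2 x]$. Hence $\alpha_{\mu^2}\in(0,2/3+o(1)]$ is bounded away from $1/2$ only when $y\ll\log^{2-\delta}x$; the regime $y\approx\log^2 x$ is exactly where $2\alpha_{\mu^2}\approx1$. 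Write $\sigma=2\alpha_{\mu^2}$ and $\lambda:=(1-\sigma)\log y$. The key observation is that
\[
y^{1-\sigma}=\frac{y}{(1+y/\log x)^{2+O(1/\log y)}}\asymp \frac{(\log x)^2}{y}=\frac{(u\log y)^2}{y}.
\]
Moreover $\log(u+1)\asymp\log y$ in this range, and more precisely $\log(u+1)\sim\log y$ when $y\le\log^{2+o(1)}x$, because there $\log u=\log\log x-\log\log y$. So $T:=(u\log(u+1))^2/y$ is comparable to $e^{\lambda}=y^{1-\sigma}$.

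\textbf{Step 2: the prime sum.} Split $S$ at $p\le y^{1/2}$, say. For $p$ small, $p^{\sigma}-1$ may be small, since $\sigma\log p$ can be $\ll1$. Those terms contribute at most
\[
\sum_{p\le e^{1/\sigma}}\frac{\log p}{\sigma\log p}\ll\frac{\pi(e^{1/\sigma})}{\sigma},
\]
which is $O(1)$ because $\sigma\gg 1/\log\log y$. In general the small primes give $O(\log y)^{1/2+o(1)}$, which is negligible against the main term $\gg\log y$. For the remaining primes, $p^\sigma-1\asymp p^\sigma$, and partial summation with $\vartheta(t)=t+O(t/\log^2t)$ gives
\[
S=\int_2^y\frac{dt}{t^{\sigma}-1}+O(\cdots)=\int_1^{\log y}e^{(1-\sigma)w}\,dw\,(1+o(1))+O(1).
\]
Substituting $t=e^w$, the integral equals
\[
\frac{y^{1-\sigma}-1}{1-\sigma}=\log y\cdot\frac{e^{\lambda}-1}{\lambda}=f(e^{\lambda})\log y.
\]
This formula is valid for either sign of $\lambda$, and $f(e^\lambda)=1$ when $\lambda=0$.

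\textbf{Step 3: compare with $f(T)$.} From Step 1, $e^{\lambda}=T\cdot e^{O(1)}$, and $f$ is increasing with $f(Kt)\asymp_K f(t)$. This is because $f(t)\asymp \max(1,t)/\max(1,|\log t|)$ in the appropriate sense: for $t\to0$, $f(t)\sim1/|\log t|$, and bounded ratios change $|\log t|$ by $O(1)$. Hence $S\asymp f(T)\log y$.

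For the asymptotic version, when $y/\log x\to\infty$ and $y\le\log^{2+o(1)}x$, I need $\lambda=\log T+o(\max(1,|\log T|))$ together with relative error $o(1)$ in Step 2. The second condition ensures $\lambda\le o(\log y)$. Then
\[
\log(1+y/\log x)=\log(y/\log x)+o(1),
\]
but the $O(\log(y/\log x)/\log y)$ relative error in $\alpha_{\mu^2}$ multiplies to an error of $O(\log(y/\log x)/\log y)\cdot\log y$ in $\lambda$. That error is $O(1)$, not $o(1)$, so I will need the sharper second-order expansion of $\alpha_{\mu^2}$ from \cite[Lem.~2.8]{TenenbaumSfree}. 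It should show that the $\log\log$ correction is exactly the $\log\log(u+1)$ appearing in $T$; this is precisely why $T$ contains $\log(u+1)$ rather than $\log y$. This is the main obstacle. If $|\log T|\to\infty$, an $O(1)$ error in $\lambda$ only affects $f(e^\lambda)$ by a factor $1+O(1/|\log T|)$ when $T<1$, so the sharp expansion is only needed when $T\gg1$, where $f(e^\lambda)\approx e^\lambda/\lambda$ is sensitive.
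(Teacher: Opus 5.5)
Your overall route is the paper's: compare $S$ with $\int_1^y t^{-\sigma}\,dt=f(y^{1-\sigma})\log y$, and feed in the size of $y^{1-\alpha_{\mu^2}}$ from \cite[Lem.~2.8]{TenenbaumSfree}. However, Step 2 is wrong as written in two places.

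First, the small-prime bound rests on $\sigma\gg 1/\log\log y$, which is false. For $y\asymp\log x$ one has $\alpha_{\mu^2}\asymp 1/\log y$, so $\sigma\log p\ll 1$ for \emph{every} $p\le y$. The ``small primes'' are then the whole sum, of size $\asymp y$ rather than $O(1)$. Moreover $p^{\sigma}-1\not\sim p^{\sigma}$ there, so the claim $S=(1+o(1))\int_1^y t^{-\sigma}\,dt+O(1)$ fails in that range. The paper splits into cases. If $y\asymp\log x$, each term is $\asymp 1/\alpha_{\mu^2}$, so $S\asymp \pi(y)/\alpha_{\mu^2}\asymp y\asymp f(T)\log y$ directly. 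If $y/\log x\to\infty$, then $\alpha_{\mu^2}\log y\to\infty$, and \cite[Eq.~(7.1)]{HildebrandTenenbaum1986} gives $S=(1+o(1))\int_1^y t^{-\sigma}\,dt+O(1)$.

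Second, your assertion that the main term is $\gg\log y$ is false. For $y\ge\log^{2+\delta}x$ one has $T\ll(\log x)^{-\delta}$ and $f(T)\log y\asymp1$, so ``integral $+O(1)$'' yields only an upper bound. You need the trivial lower bound $S\ge\log 2/(2^{\sigma}-1)\gg1$, which the paper supplies.

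For the asymptotic statement, your Step 1 claim that $\log(u+1)\sim\log y$ whenever $y\le\log^{2+o(1)}x$ is false: for $y=\log^{3/2}x$ the ratio tends to $2/3$. So $(\log x)^2/y$ is not an asymptotic substitute for $T$. What you call the main obstacle is exactly the input the paper imports from \cite[Lem.~2.8]{TenenbaumSfree}: $y^{1-\alpha_{\mu^2}}\asymp u\log(u+1)$ throughout the range, with $\sim$ when $y/\log x\to\infty$. Squaring and dividing by $y$ gives $y^{1-\sigma}=T(1+o(1))$. Since $0\le \frac{d\log f(t)}{d\log t}\le 1$ for all $t>0$, this gives $f(y^{1-\sigma})\sim f(T)$ uniformly, with no case analysis on the size of $T$. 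The hypothesis $y\le\log^{2+o(1)}x$ is used only to ensure $\lambda\ge-o(\log y)$ (you wrote $\lambda\le o(\log y)$). Equivalently, it ensures $f(T)\log y\to\infty$, so that the $O(1)$ term is negligible.
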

\begin{proof}
	If $y \asymp \log x$, then $\alpha_{\mu^2} \asymp 1/\log y$ \cite[Lem.~2.8]{TenenbaumSfree}, implying that the left-hand side of \eqref{eq:psum2} is 
	\[  \asymp \sum_{p \le y} \frac{1}{\alpha_{\mu^2}} \asymp y\]
	as needed. If $y/\log x \to \infty$, then $\alpha_{\mu^2}\log y \to \infty$ \cite[Lem.~2.8]{TenenbaumSfree}. Equation (7.1) of \cite{HildebrandTenenbaum1986} with $\sigma=2\alpha_{\mu^2}$ says that the left-hand side of \eqref{eq:psum2} equals
	\begin{equation}\label{eq:logpint}
		(1+o(1))\int_{1}^{y} \frac{\diff{t}}{t^{2\alpha_{\mu^2}}}+ O(1).
	\end{equation}
	We have the trivial lower bound 
	\[		\sum_{p \le y} \frac{\log p}{p^{2\alpha_{\mu^2}}-1} \ge \frac{\log 2}{2^{2\alpha_{\mu^2}}-1} \gg \frac{1}{\alpha_{\mu^2}} \]
	which complements \eqref{eq:logpint} when the integral is bounded.
	By \cite[Lem.~2.8]{TenenbaumSfree}, we have $y^{1-\alpha_{\mu^2}}\asymp u\log(u+1)$ in the considered range, which means that $y^{1-2\alpha_{\mu^2}} \asymp (u\log (u+1))^2/y$. If $y/\log x \to \infty$, then the same lemma shows $y^{1-2\alpha_{\mu^2}} \sim (u\log (u+1))^2/y$. This allows us to estimate the integral in \eqref{eq:logpint} as the right-hand side of \eqref{eq:psum2}. 
\end{proof}
\begin{lem}\label{lem:phidiff}
	Suppose $\log^3 x \ge y \ge 3\log x$ and $x \ge C$. Then, in the notation of Proposition \ref{prop:sfree}, \[\phi^{(2)}(\alpha,y)-\phi^{(2)}_{\mu^2}(\alpha_{\mu^2},y) \ll  f((u \log(u+1))^2/y) \log ^2 y,\]
	where the derivative is taken with respect to the first variable.
\end{lem}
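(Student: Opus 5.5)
We split the difference into two pieces, one measuring the change of the evaluation point and one measuring the change of the function:
\[
\phi^{(2)}(\alpha,y)-\phi^{(2)}_{\mu^2}(\alpha_{\mu^2},y)=\big(\phi^{(2)}(\alpha,y)-\phi^{(2)}(\alpha_{\mu^2},y)\big)+\big(\phi^{(2)}(\alpha_{\mu^2},y)-\phi^{(2)}_{\mu^2}(\alpha_{\mu^2},y)\big).
\]

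For the second piece we use the identity $\phi_{\mu^2}(s,y)=\phi(s,y)-\phi(2s,y)$, already exploited in the proof of Lemma~\ref{lem:phi}. It gives $\phi^{(2)}(t,y)-\phi^{(2)}_{\mu^2}(t,y)=4\phi^{(2)}(2t,y)$. Equivalently, directly,
\[
\frac{p^t}{(p^t-1)^2}-\frac{p^t}{(p^t+1)^2}=\frac{4p^{2t}}{(p^{2t}-1)^2}.
\]
So the second piece equals
\[
4\sum_{p\le y}\frac{p^{2\alpha_{\mu^2}}\log^2 p}{(p^{2\alpha_{\mu^2}}-1)^2}.
\]
Since $p^{2\alpha_{\mu^2}}/(p^{2\alpha_{\mu^2}}-1)=1+1/(p^{2\alpha_{\mu^2}}-1)$, this is at most
\[
4\log y\sum_{p\le y}\frac{\log p}{p^{2\alpha_{\mu^2}}-1}+4\sum_{p\le y}\frac{\log^2 p}{(p^{2\alpha_{\mu^2}}-1)^2}.
\]
The first sum is $\ll f((u\log(u+1))^2/y)\log^2 y$ by Lemma~\ref{lem:psum2}.

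The second sum is the main obstacle. We need to show it is $\ll$ the same quantity. Take $y/\log x$ large, so that $\alpha_{\mu^2}\log y\to\infty$ by \cite[Lem.~2.8]{TenenbaumSfree}. Split the primes at $p^{2\alpha_{\mu^2}}=2$:
\begin{itemize}
\item For the large primes, $(p^{2\alpha_{\mu^2}}-1)^{-2}\ll (p^{2\alpha_{\mu^2}}-1)^{-1}$, so they are dominated by the first sum times $\log y$.
\item For the small primes, $p\le 2^{1/(2\alpha_{\mu^2})}$, we have $p^{2\alpha_{\mu^2}}-1\asymp \alpha_{\mu^2}\log p$. Their contribution is therefore $\ll \alpha_{\mu^2}^{-2}\pi(2^{1/(2\alpha_{\mu^2})})$.
\end{itemize}
This last bound is $\ll \alpha_{\mu^2}^{-1}\log y$ because $\alpha_{\mu^2}\gg 1/\log y$. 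It is thus absorbed by the trivial lower bound $\gg 1/\alpha_{\mu^2}$ for the sum in Lemma~\ref{lem:psum2}, times $\log y$. When $y\asymp\log x$, we have $\alpha_{\mu^2}\asymp 1/\log y$, and both sums are trivially $\ll y\log^2 y$. This is of the right size, since $f\asymp y/\log y$ there by Lemma~\ref{lem:psum2}.

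For the first piece we apply the mean value theorem. It equals $(\alpha-\alpha_{\mu^2})\,|\phi^{(3)}(t,y)|$ for some $t$ between $\alpha_{\mu^2}$ and $\alpha$. Here
\[
|\phi^{(3)}(t,y)|=\sum_{p\le y}\frac{p^t(p^t+1)\log^3 p}{(p^t-1)^3}\ll \log y\,\phi^{(2)}(t,y)+(\text{a small-}p\text{ term}).
\]
The small-$p$ term is handled just like the second sum above. Combined with Lemma~\ref{lem:phi}, this gives $|\phi^{(3)}(t,y)|\ll (\log x)\log^2 y$ on this interval.

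Lemmas~\ref{lem:alphadiff} and \ref{lem:psum2} give
\[
\alpha-\alpha_{\mu^2}\ll \frac{f((u\log(u+1))^2/y)}{\log x}.
\]
Multiplying the two bounds, the first piece is $\ll f((u\log(u+1))^2/y)\log^2 y$, which completes the bound.
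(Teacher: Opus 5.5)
Your decomposition is the paper's: the mean value theorem for $\phi^{(2)}(\cdot,y)$ on $[\alpha_{\mu^2},\alpha]$, the identity $\phi^{(2)}(t,y)-\phi^{(2)}_{\mu^2}(t,y)=4\phi^{(2)}(2t,y)$, and Lemmas~\ref{lem:alphadiff}--\ref{lem:psum2} for $\alpha-\alpha_{\mu^2}$. Bounding $|\phi^{(3)}(t,y)|$ by $\log y\,\phi^{(2)}(t,y)$ and then invoking Lemma~\ref{lem:phi} is a fine substitute for the paper's route through $\sum_{p\le y}\log p/(p^{\alpha_{\mu^2}}-1)\ll\log x$.

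However, the step you call the main obstacle is wrong as written. Put $a=\alpha_{\mu^2}$ and $z=2^{1/(2a)}$. When $a$ is small, $\pi(z)\asymp az$, so $a^{-2}\pi(z)\asymp z/a$. Your claim $a^{-2}\pi(z)\ll a^{-1}\log y$ therefore amounts to $2^{1/(2a)}\ll\log y$, i.e.\ $a\gg 1/\log\log y$. The lower bound $a\gg 1/\log y$ only gives $z\le y^{C}$, and the claim fails as soon as $\alpha_{\mu^2}\log\log y\to0$. For example, for $y=\log x\log\log x$ one has $\alpha_{\mu^2}\asymp\log\log\log x/\log\log x$, so $z=\exp(c\log\log x/\log\log\log x)$, far larger than $\log y\sim\log\log x$. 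The small-$p$ part of $\phi^{(3)}$, which you handle ``just like'' this, inherits the same error.

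Separately, in the case $y\asymp\log x$ your bound $y\log^2 y$ is too weak. There $f\log^2 y\asymp y\log y$, and the correct trivial bound is $\pi(y)\log^2 y\asymp y\log y$.

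The missing ingredient is the pointwise inequality $p^t-1\ge t\log p$, i.e.\ $\log p/(p^t-1)\le 1/t$, valid for every prime and every $t>0$. For $p\le y$ it gives
\[
\frac{p^{t}\log^2 p}{(p^{t}-1)^2}=\frac{\log^2 p}{p^t-1}+\frac{\log^2 p}{(p^t-1)^2}\le\Big(\log y+\frac1t\Big)\frac{\log p}{p^t-1},\qquad \frac{(p^t+1)\log p}{p^t-1}=\log p+\frac{2\log p}{p^t-1}\le\log y+\frac2t .
\]
Take $t\gg 1/\log y$, which holds for $t=2\alpha_{\mu^2}$ and for every $t\ge\alpha_{\mu^2}$. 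Then $\phi^{(2)}(t,y)\ll\log y\sum_{p\le y}\log p/(p^t-1)$ and $|\phi^{(3)}(t,y)|\ll\log y\,\phi^{(2)}(t,y)$. No splitting of the primes and no case distinction are needed. This is exactly the estimate \eqref{eq:lem4} that the paper quotes from the proof of \cite[Lem.~4]{HildebrandTenenbaum1986}. With it in place of your small/large-prime split, the rest of your argument is correct.
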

\begin{proof}
	By the mean value theorem, there is a $t_{x,y} \in [\alpha_{\mu^2},\alpha]$ such that
\[	\phi^{(2)}(\alpha,y)-\phi^{(2)}_{\mu^2}(\alpha_{\mu^2},y) = \phi^{(2)}(\alpha,y)-\phi^{(2)}(\alpha_{\mu^2},y) + 4\phi^{(2)}(2\alpha_{\mu^2},y)= (\alpha-\alpha_{\mu^2})\phi^{(3)}(t_{x,y},y) +4\phi^{(2)}(2\alpha_{\mu^2},y). \]
	We appeal to Lemmas \ref{lem:alphadiff} and \ref{lem:psum2} to estimate $\alpha-\alpha_{\mu^2}$. At this point, it suffices to demonstrate $\phi^{(3)}(t_{x,y},y) \ll \log x (\log^2 y)$ and $\phi^{(2)}(2\alpha_{\mu^2},y) \ll f((u\log(u+1))^2/y)\log^2 y$. To show this, we use the elementary estimate 
	\begin{equation}\label{eq:lem4} \phi^{(k)}(t,y) \ll \log^{k-1} y \sum_{p \le y}\frac{\log p}{p^t-1}, \qquad (k=2,3)
	\end{equation}
	which 	holds as long as $t \gg 1/\log y$; it is demonstrated in the proof of \cite[Lem.~4]{HildebrandTenenbaum1986}. We may take any $t \ge \alpha_{\mu^2}$ since $\alpha_{\mu^2} \gg  1/\log y$ \cite[Lem.~2.8]{TenenbaumSfree}. From the definition of $\alpha_{\mu^2}$ and Lemma \ref{lem:psum2},
	\begin{multline*}
	\sum_{p \le y}\frac{\log p}{p^{t_{x,y}}-1} \le \sum_{p \le y}\frac{\log p}{p^{\alpha_{\mu^2}}-1} = \sum_{p \le y}\frac{\log p}{p^{\alpha_{\mu^2}}+1}+2\sum_{p \le y}\frac{\log p}{p^{2\alpha_{\mu^2}}-1}\\
	= \log x + O\bigg(f \bigg(\frac{(u\log(u+1))^2}{y}\bigg)\log y\bigg)\ll \log x,
\end{multline*}
	implying $\phi^{(3)}(t_{x,y},y) \ll \log x (\log^2 y)$ via \eqref{eq:lem4} with $k=3$. Similarly, by \eqref{eq:lem4} with $k=2$,
	\[\phi^{(2)}(2\alpha_{\mu^2},y) \ll \log y \sum_{p \le y} \frac{\log p}{p^{2\alpha_{\mu^2}}-1} \ll f\bigg( \frac{(u\log(u+1))^2}{y}\bigg) \log^2 y\]
	by Lemma \ref{lem:psum2}.
\end{proof}

\begin{proof}[Proof of Proposition \ref{prop:sfree}]
	Recall $\phi(t,y)= \log \zeta(t,y)$, $\phi_{\mu^2}(t,y) = \log (\zeta(t,y)/\zeta(2t,y))$. For $t>0$, let \[g(t) := \phi(t,y)+t \log x \quad \text{and}\quad g_{\mu^2}(t) := \phi_{\mu^2}(t,y)+t\log x.\]
	Recall Hildebrand and Tenenbaum proved in \cite[Thm.~1]{HildebrandTenenbaum1986} that
	\begin{equation}\label{eq:psiacc}
		\Psi(x,y) = \frac{\zeta(\alpha,y)x^{\alpha}}{\alpha\sqrt{2\pi \phi_2(\alpha,y)}}(1+O(1/\ubar))
	\end{equation}
	holds uniformly for $x \ge y \ge 2$,  where $\phi_2(\alpha,y)$ is defined in \eqref{eq:phi_2} and estimated in \eqref{eq:phi_2_est}. We denote $\phi_2(\alpha,y)$ by $\sigma_2$. Dividing \eqref{eq:TenenbaumSfree} by \eqref{eq:psiacc}, we find that
	\begin{equation}\label{eq:div1}
		\begin{split}
			\frac{\Psi_{\mu^2}(x,y)}{\Psi(x,y)}
			&=\frac{G(\alpha_{\mu^2}\sqrt{\sigma_{2,\mu^2}}) \sqrt{2\pi}\alpha\sqrt{\sigma_2}}{\zeta(2\alpha,y)}\exp( g_{\mu^2}(\alpha_{\mu^2})-g_{\mu^2}(\alpha))  (1+O(1/u))\\
			&=\frac{G(\alpha_{\mu^2}\sqrt{\sigma_{2,\mu^2}}) \sqrt{2\pi}\alpha\sqrt{\sigma_2}}{\zeta(2\alpha_{\mu^2},y)}\exp( g(\alpha_{\mu^2})-g(\alpha))  (1+O(1/u)).
		\end{split}
	\end{equation}
	Since $\alpha^2 \sigma_2 \gg u$ and $\alpha_{\mu^2}^2 \sigma_{2,\mu^2} \gg u$ by  \cite[Thm.~2]{HildebrandTenenbaum1986} and \cite[Lems.~2.8--2.9]{TenenbaumSfree}, we may apply \eqref{eq:Gasymp} to simplify \eqref{eq:div1} as
	\begin{equation}\label{eq:div2}
		\begin{split}
			\frac{\Psi_{\mu^2}(x,y)}{\Psi(x,y)} &=\frac{\alpha \sqrt{\sigma_2}}{\alpha_{\mu^2}\sqrt{\sigma_{2,\mu^2}}}\frac{1}{\zeta(2\alpha,y)}\exp( g_{\mu^2}(\alpha_{\mu^2})-g_{\mu^2}(\alpha))  (1+O(1/u))\\
			&=\frac{\alpha \sqrt{\sigma_2}}{\alpha_{\mu^2}\sqrt{\sigma_{2,\mu^2}}}\frac{1}{\zeta(2\alpha_{\mu^2},y)}\exp( g(\alpha_{\mu^2})-g(\alpha))  (1+O(1/u)).
		\end{split}
	\end{equation}
	We define $A_1=g_{\mu^2}(\alpha)-g_{\mu^2}(\alpha_{\mu^2})$ and  $A_2=g(\alpha_{\mu^2})-g(\alpha)$. This determines $B_1$ and $B_2$ uniquely. By construction, $g'(\alpha)=0$ and $g'_{\mu^2}(\alpha_{\mu^2})=0$. 
	Taylor-expanding $g_{\mu^2}$ about $\alpha_{\mu^2}$, we find $A_1 = (\alpha-\alpha_{\mu^2})^2 g_{\mu^2}''(t_{x,y,\mu^2})/2$ for some $t_{x,y,\mu^2}$ between $\alpha$ and $\alpha_{\mu^2}$. Similarly,  $A_2 = (\alpha_{\mu^2}-\alpha)^2 g''(t_{x,y})/2$ for some $t_{x,y}$ between $\alpha$ and $\alpha_{\mu^2}$. The estimates for $A_i$ now follow from Lemmas \ref{lem:phi}--\ref{lem:phidiff}.
	
	By \cite[Thm.~2]{HildebrandTenenbaum1986} and \cite[Lems.~2.8--2.9]{TenenbaumSfree}, we have $\alpha \asymp \alpha_{\mu^2} $ and $\sigma_{2} \asymp  \sigma_{2,\mu^2} $ in our range, implying $B_i \ll 1$, which suffices if $y \asymp \log x$. Moreover, Lemmas \ref{lem:phi}--\ref{lem:phidiff} show \[\frac{\sigma_2 - \sigma_{2,\mu^2}}{\sigma_2},\frac{\alpha-\alpha_{\mu^2}}{\alpha} \ll \frac{f\left( \frac{(u\log(u+1))^2}{y}\right)}{u}\]	
	holds in our range,	implying the needed bounds on $B_i$ once $y \ge C \log x$.
\end{proof}
\begin{rem}\label{rem:sfree}
	The term $1/u$ can be removed from the bound on $B_i$ in \eqref{eq:ACsize} as we sketch below. If $y \le \log^2 x$, then $f((u\log(u+1))^2/y) \gg 1$, so it suffices to explain how $1/u$ can be omitted when $y \in [\log^2 x,\log^3x]$. The Main Theorem of \cite{Saha} says that the $O(1/\ubar)$ term in \eqref{eq:psiacc} can be replaced with
	\begin{equation}\label{eq:uimp} \frac{1}{8} \frac{\phi^{(4)}(\alpha,y)}{\phi^{(2)}(\alpha,y)^2} - \frac{5}{24} \frac{\phi^{(3)}(\alpha,y)^2}{\phi^{(2)}(\alpha,y)^3} + O\left(\frac{1}{\log x}\right)
	\end{equation}
	when $y \in [\log^2 x,\log^3x]$. An adaptation of \cite{Saha} should show that (in the same range) the term $O_{\varepsilon}(1/u)$ in \eqref{eq:naimi}
	can be replaced with 
	\begin{equation}\label{eq:uimp2} \frac{1}{8} \frac{\phi_{\mu^2}^{(4)}(\alpha_{\mu^2},y)}{\phi_{\mu^2}^{(2)}(\alpha_{\mu^2},y)^2} - \frac{5}{24} \frac{\phi_{\mu^2}^{(3)}(\alpha_{\mu^2},y)^2}{\phi_{\mu^2}^{(2)}(\alpha_{\mu^2},y)^3} + O\left(\frac{1}{\log x}\right).
	\end{equation}
	Using an argument similar to Lemma \ref{lem:phidiff}, the difference between \eqref{eq:uimp} and \eqref{eq:uimp2} can be shown to be $\ll 1/\log x + f((u\log(u+1))^2/y)/u^2$. This upgrades the terms $1/u$ in \eqref{eq:div1}--\eqref{eq:div2} to $1/\log x+ f((u\log(u+1))^2/y)/u^2$, which in turn upgrades the bound on $B_i$ in Proposition \ref{prop:sfree} to 
	\[ B_i \ll \frac{1}{\log x} + \frac{f\left(\frac{(u\log(u+1))^2}{y}\right)}{u^2} +  \frac{f\left(\frac{(u\log(u+1))^2}{y}\right)}{u} \ll   \frac{f\left(\frac{(u\log(u+1))^2}{y}\right)}{u}.\]
\end{rem}

\section{Proofs of technical lemmas}\label{sec:technical}
\subsection{Proof of Lemma \ref{lem:ularge}}
The following lemma is a minor generalization of Lemma~10 of \cite{HildebrandTenenbaum1986}. \begin{lem}\cite[Lem.~10]{HildebrandTenenbaum1986}\label{lem:trunc}
	Fix $\varepsilon >0$. Suppose that $0<d\le x$. Uniformly for $x\ge y \ge 2$, we have
	\[\Psi(x/d,y)=\frac{1}{2\pi i}\int_{\alpha-i/\log y}^{\alpha+i/\log y} \zeta(s,y)\frac{(x/d)^s}{s}\diff{s}+O_{\varepsilon} ( (x/d)^{\alpha} \zeta(\alpha,y) \exp(-\min\{ cu\log(2u)^{-2},(\log y)^{3/2-\varepsilon}\})).\]
\end{lem}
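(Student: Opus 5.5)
The plan is to follow the proof of Lemma~10 of \cite{HildebrandTenenbaum1986}, which is the case $d=1$, and to check at each step that dividing $x$ by $d$ is harmless. The key observation is that all bounds coming from the integrand depend on $\zeta(s,y)$ and on $\alpha=\alpha(x,y)$. Only the factor $(x/d)^s$ changes, and on the line $\Re s=\alpha$ its modulus is exactly $(x/d)^{\alpha}$.

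\emph{Step 1: Perron's formula.} For $0<d\le x$, apply the truncated Perron formula on the line $\Re s=\alpha$ at some height $T$:
\[\Psi(x/d,y)=\frac{1}{2\pi i}\int_{\alpha-iT}^{\alpha+iT}\zeta(s,y)\frac{(x/d)^s}{s}\diff{s}+O\Big((x/d)^{\alpha}\sum_{n\ y\text{-smooth}} n^{-\alpha}\min\{1,(T|\log(x/(dn))|)^{-1}\}\Big).\]
This is legitimate because $\zeta(s,y)$ converges absolutely for $\Re s>0$. The height $T$ is chosen exactly as in \cite{HildebrandTenenbaum1986}; there it is roughly $\exp$ of a power of $\min\{u,(\log y)^{3/2-\varepsilon}\}$.

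\emph{Step 2: the Perron error term.} Split the sum according to whether $n$ is close to $x/d$. Terms with $|\log(x/(dn))|\ge 1/\sqrt T$, say, contribute $\ll (x/d)^{\alpha}\zeta(\alpha,y)T^{-1/2}$. The terms with $n$ near $x/d$ contribute at most $\Psi$-type counts over a short interval, which are bounded using only $\alpha$ and $\zeta(\alpha,y)$ (by Rankin). In \cite{HildebrandTenenbaum1986} these estimates depend on the point $x$ only through the factor $x^{\alpha}$. Here they will depend on $x/d$ only through $(x/d)^{\alpha}$, since we keep the exponent $\alpha=\alpha(x,y)$ rather than $\alpha(x/d,y)$ and the argument never uses the saddle-point equation at this stage.

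\emph{Step 3: the range $1/\log y\le|\tau|\le T$.} On this range bound the integral trivially by
\[(x/d)^{\alpha}\int_{1/\log y\le|\tau|\le T}\frac{|\zeta(\alpha+i\tau,y)|}{|\alpha+i\tau|}\diff{\tau}.\]
Then use the key estimate of \cite{HildebrandTenenbaum1986} (their Lemma~8 and its consequences): for $1/\log y\le|\tau|\le T$,
\[|\zeta(\alpha+i\tau,y)|/\zeta(\alpha,y)\le\exp(-c\min\{u\log(2u)^{-2},(\log y)^{3/2-\varepsilon}\}).\]
This estimate involves only $\alpha$ and $y$, so it transfers verbatim. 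Integrating over $|\tau|\le T$ costs a factor $\log T$, which is absorbed into the exponential by adjusting $c$.

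\emph{Main obstacle.} The delicate point is Step 2 near $n\approx x/d$ when $x/d$ is small, say $x/d<y$ or even $x/d<1$. For $x/d<1$ the left side is $0$. In that case a contour shift to the right (or the identity $\Psi=0$) shows that the main integral is itself $O((x/d)^{\alpha}\zeta(\alpha,y)e^{-\ldots})$. Otherwise, I would bound the short-interval count $\#\{n\ y\text{-smooth}:|n-x/d|\le (x/d)/\sqrt{T}\}$ by Rankin with exponent $\alpha$. This gives $\ll (x/d)^{\alpha}\zeta(\alpha,y)$ times the small factor coming from $\min\{1,(T|\log|)^{-1}\}$ after summing dyadically. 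The dyadic summation yields the required $\exp(-\cdots)$ saving once the cost $\log T$ is absorbed.
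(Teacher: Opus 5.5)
Your overall strategy is the same as the paper's. The paper only asserts that the proof of \cite[Lem.~10]{HildebrandTenenbaum1986} goes through with $x$ replaced by $x/d$, and the reason is the one you give: everything happens on the line $\Re s=\alpha$ with $\alpha=\alpha(x,y)$ kept fixed, $|(x/d)^s|=(x/d)^{\alpha}$ there, and the bounds of \cite[Lem.~8]{HildebrandTenenbaum1986} for $|\zeta(\alpha+i\tau,y)|/\zeta(\alpha,y)$ involve only $\alpha$ and $y$, so the saving is in terms of $u$ rather than $\log(x/d)/\log y$. Your Steps 1 and 3 are fine. However, your treatment of the near-diagonal terms in Step 2, which you correctly flag as the delicate point, does not work.

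\textbf{The gap in Step 2.} Rankin's trick cannot detect the shortness of an interval. With $X=x/d$ it only gives
\[\#\{n\ y\text{-smooth}: |\log(X/n)|\le\eta\}\le\sum_{n}(Xe^{\eta}/n)^{\alpha}\le e^{\alpha\eta}X^{\alpha}\zeta(\alpha,y),\]
with no factor $\eta$. The Perron weight $\min\{1,(T|\log(X/n)|)^{-1}\}$ equals $1$ on the innermost shell $|\log(X/n)|\le 1/T$. So your dyadic sum is a geometric series dominated by its first term, and it yields only $O(X^{\alpha}\zeta(\alpha,y))$, with no exponential saving. This is at least of the order of the main term $X^{\alpha}\zeta(\alpha,y)/(\alpha\sqrt{2\pi\phi_2(\alpha,y)})$ itself.

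The trivial count $\ll X/\sqrt{T}+1$ fails as well. For $d$ bounded and $u$ large compared with $(\log y)^{3/2}$, $x^{1-\alpha}$ is roughly $(u\log u)^{u}$. This is far beyond $\sqrt{T}\,\zeta(\alpha,y)\le\exp(O(u)+(\log y)^{3/2})$ for any $T$ in the range where Lemma~8 is available.

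\textbf{What is missing.} You need a device that turns the near-diagonal count into an integral on $\Re s=\alpha$, where the decay of $\zeta(s,y)$ can be exploited. One way is to use the monotonicity of $\Psi(\cdot,y)$:
\[\frac{1}{\lambda}\int_{-\lambda}^{0}\Psi(Xe^{v},y)\diff{v}\le\Psi(X,y)\le\frac{1}{\lambda}\int_{0}^{\lambda}\Psi(Xe^{v},y)\diff{v}.\]
The two outer quantities equal $\frac{1}{2\pi i}\int_{(\alpha)}\zeta(s,y)X^{s}K_{\pm}(s)\diff{s}$ with $K_{\pm}(s)=\pm(e^{\pm\lambda s}-1)/(\lambda s^{2})$. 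The kernel satisfies $K_{\pm}(s)=1/s+O(\lambda)$ for $|\Im s|\le 1/\log y$, and $K_{\pm}(s)\ll\min\{|s|^{-1},(\lambda|s|^{2})^{-1}\}$ on the whole line. Using the trivial bound $|\zeta(s,y)|\le\zeta(\alpha,y)$ for $|\Im s|>T$ and taking $\lambda=T^{-1/2}$, the error is
\[\ll X^{\alpha}\zeta(\alpha,y)\Big(T^{-1/2}+\int_{1/\log y}^{T}\frac{|\zeta(\alpha+i\tau,y)|}{\tau\,\zeta(\alpha,y)}\diff{\tau}\Big).\]
This is exactly the quantity you already control in Step 3, and every step uses only $|X^{s}|=X^{\alpha}$ and $\alpha=\alpha(x,y)$.

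Note also that importing a short-interval estimate for $\Psi$ near $x/d$ phrased through $\alpha(x/d,y)$ would only give a saving in terms of $\log(x/d)/\log y$, not $u$. That is weaker than what the statement claims.
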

Lemma~10 of \cite{HildebrandTenenbaum1986} corresponds to the $d=1$ case of Lemma~\ref{lem:trunc}. The proof of Lemma~\ref{lem:trunc} is in fact identical to the proof of \cite{HildebrandTenenbaum1986} if one replaces instances of ``$x$'' with ``$x/d$''. From Lemma~\ref{lem:trunc} applied twice (once as is, and a second time with $d=1$), we find that
\begin{equation}\label{eq:psiminuspsi}
	\Psi(x/d,y) - d^{-\alpha}\Psi(x,y) = \frac{1}{2\pi i} \int_{\alpha-i/\log y}^{\alpha+i/\log y} \zeta(s,y) ( d^{-s} -d^{-\alpha}) \frac{x^s}{s}\diff{s} + E
\end{equation}
for
\[ E \ll_{\varepsilon}  (x/d)^{\alpha} \zeta(\alpha,y) \exp(-\min\{ cu\log(2u)^{-2},(\log y)^{3/2-\varepsilon}\}). \]
From \eqref{eq:psiacc} and \eqref{eq:phi_2_est},
\begin{equation}\label{eq:sad}
	\Psi(x,y) \asymp \frac{\zeta(\alpha,y)x^{\alpha}}{\alpha\sqrt{(\log x)( \log y)\max\{1,(\log x)/y\}}},
\end{equation}
so we may bound $E$ by
\[ E\ll_{\varepsilon}  d^{-\alpha}\Psi(x,y)\alpha  \sqrt{(\log x) (\log y )\max\{1,(\log x)/y\}} \exp(-\min\{ cu\log(2u)^{-2},(\log y)^{3/2-\varepsilon}\}).\]
We let $T_0:=\ubar^{2/3}/\log x$, as in the proof of \cite[Lem.~11]{HildebrandTenenbaum1986}.
We shall now bound the contribution of $T_0 \le |t| \le 1/\log y$ to the integral in the right-hand side of \eqref{eq:psiminuspsi}, where $t=\Im s$. The trivial bound
\[ d^{-\alpha-it}-d^{-\alpha} \ll d^{-\alpha} \]
shows that this range contributes to the integral in the right-hand side of \eqref{eq:psiminuspsi} at most
\begin{equation}\label{eq:abovet0}
	\ll \frac{\zeta(\alpha,y) x^{\alpha}}{d^{\alpha}\alpha}   \int_{T_0}^{1/\log y} \frac{\alpha}{t+\alpha} \left|\frac{\zeta(\alpha+it,y)}{\zeta(\alpha,y)}\right| \diff{t}
\end{equation}
Using \eqref{eq:sad}, we can bound \eqref{eq:abovet0} as
\[ \ll \frac{\Psi(x,y)}{d^{\alpha}} \sqrt{(\log x )(\log y)\max\{1,(\log x)/y\}}\int_{T_0}^{1/\log y}  \left|\frac{\zeta(\alpha+it,y)}{\zeta(\alpha,y)}\right| \diff{t}.\]
If $y \ge \log x$, then
\[ \left|\frac{\zeta(\alpha+it,y)}{\zeta(\alpha,y)}\right| \le \exp(-ct^2 (\log x)( \log y))\]
by the first part of \cite[Lem.~8]{HildebrandTenenbaum1986}, and it follows that 
\[ \int_{T_0}^{1/\log y}  \left|\frac{\zeta(\alpha+it,y)}{\zeta(\alpha,y)}\right| \diff{t}\ll \exp(-cu^{1/3}) .\]
If $y \le \log x$ (and $y$ is sufficiently large\footnote{If $y =O(1)$, Lemma~\ref{lem:ularge} is easily verified since $\alpha \asymp 1/\log x$, $\additive \asymp 1$, and $\Psi(x,y)\asymp (1+\log x)^{\pi(y)}$, where $\pi(y)$ is the prime counting function.}), then the first part of \cite[Lem.~8]{HildebrandTenenbaum1986} shows
\begin{align*}
	\int_{T_0}^{1/\log y}  \left|\frac{\zeta(\alpha+it,y)}{\zeta(\alpha,y)}\right| \diff{t} &\le \int_{T_0}^{1/\log y} \left(1+ \frac{ct^2\log^2 x \log^2 y}{y^2}\right)^{-\frac{cy}{\log y}}\diff{t} = \frac{1}{\log x}\int_{\ubar^{2/3}}^{u}\left(1+ \frac{cv^2 \log^2 y}{y^2}\right)^{-\frac{cy}{\log y}}\diff{v} \\
	& \le \frac{1}{\log x} \left( \int_{\ubar^{2/3}}^{Cy/\log y} \exp\left(-\frac{cv^2  \log y}{y}\right)\diff{v} + \int_{Cy/\log y}^{u}\left(1+ \frac{cv^2 \log^2 y}{y^2}\right)^{-\frac{cy}{\log y}}\diff{v}\right)\\
	& \ll  \frac{\exp(-c\ubar^{1/3})}{\log x} \ll \frac{\exp(-c\ubar^{1/3})}{u}.
\end{align*}
At this stage, we have shown that
\[\Psi(x/d,y) - d^{-\alpha}\Psi(x,y) (1+E')= S\]
for 
\[ S:= \frac{1}{2\pi i}\int_{\alpha-iT_0}^{\alpha+iT_0} \zeta(s,y) x^s (d^{-s} - d^{-\alpha}) \frac{\diff{s}}{s}\]
and $E' \ll \additive$.
We study $S$. Using the first part of \cite[Lem.~8]{HildebrandTenenbaum1986} again gives the bound
\[ S \ll \frac{\zeta(\alpha,y)x^{\alpha}}{d^{\alpha}\alpha} \int_{\alpha-iT_0}^{\alpha+iT_0} \left| \frac{\zeta(\alpha+it,y)}{\zeta(\alpha,y)}\right|\diff{t} \ll  \frac{\zeta(\alpha,y)x^{\alpha}}{d^{\alpha}\alpha} \int_{-T_0}^{T_0} \exp(-c\phi_2(\alpha,y)t^2)\diff{t} \ll  \frac{\zeta(\alpha,y)x^{\alpha}}{d^{\alpha}\alpha\sqrt{\phi_2(\alpha,y)}}\ll \frac{\Psi(x,y)}{d^{\alpha}}\]
where the last inequality follows from \eqref{eq:psiacc}. This shows $\Psi(x/d,y) \ll_{\varepsilon} \Psi(x,y)d^{-\alpha}(1+\additive)$, proving the last part of Lemma \ref{lem:ularge}.

We now give another estimate for $S$.
Put $\phi(s,y)=\log \zeta(s,y)$ and $\sigma_k=\phi^{(k)}(\alpha,y)$, the derivative being taken with respect to the first variable. Recall that $\sigma_2$ coincides with $\phi_2(\alpha,y)$ from \eqref{eq:phi_2} and observe $T_0 \ll \alpha$. By \cite[Lem.~4]{HildebrandTenenbaum1986},
\[\sigma_k \asymp (-u \log y)^k (\ubar)^{1-k}\]
for $k=2,3,4$. As in \cite[p.~280]{HildebrandTenenbaum1986}, we Taylor-expand to find
\begin{equation}\label{eq:tay} \frac{\zeta(s,y)x^{s}}{s} = \frac{\zeta(\alpha,y)x^{\alpha}}{\alpha} e^{-\frac{t^2}{2}\sigma_2 } \left(1-i\frac{t}{\alpha}-i\frac{t^3}{6}\sigma_3 + O(t^6 \sigma_3^2 +t^2 \alpha^{-2} +t^4 \sigma_4 )\right).
\end{equation}
We Taylor-expand $d^{-s}-d^{-\alpha}$ too, as
\begin{equation}\label{eq:moretay} d^{-s}-d^{-\alpha}=d^{-\alpha}(-it\log d  +O(t^2\log^2 d))
\end{equation}
for $s=\alpha+it$ where $t= \Im s$.
Plugging \eqref{eq:moretay} and \eqref{eq:tay} in $S$ and performing the change of variables $t^2 \sigma_2 = v^2$ gives
\[ S = d^{-\alpha}\frac{\zeta(\alpha,y)x^{\alpha}}{\alpha\sqrt{2\pi \sigma_2}} \frac{1}{\sqrt{2\pi}}\int_{-T_0\sqrt{\sigma_2}}^{T_0\sqrt{\sigma_2}} e^{-\frac{v^2}{2}} S' \diff{v} \]
where
\[S' =-v^2\frac{\log d}{\alpha \sigma_2} - v^4\frac{ \log d }{6\sigma_2^2}\sigma_3+v z+ O\left( \frac{v^2 \log ^2 d}{\sigma_2} + \frac{|v| |\log d|}{\sqrt{\sigma_2}}\left(v^6 \frac{\sigma_3^2}{\sigma_2^3} + v^2 \frac{\alpha^{-2}}{\sigma_2} + v^4\frac{ \sigma_4}{\sigma_2^2}\right)\right)\]
for some quantity $z$ whose precise value does not matter because $\int_{-A}^{A} e^{-\frac{v^2}{2}}v \diff{v} = 0$. To estimate the integral of $e^{-v^2/2}S'$, we use 
\begin{equation}\label{eq:intest}
	\frac{1}{\sqrt{2\pi}} \int_{-A}^{A} e^{-\frac{v^2}{2}}v^k \diff{v} = \frac{k!}{(k/2)!2^{k/2}}+ O_k(e^{-c_k A^2})
\end{equation}
for even $k$ and $A \gg 1$ and 
\[	\int_{-A}^{A} e^{-\frac{v^2}{2}}|v|^k \diff{v} \ll_k 1  \]
for all $k \ge 0$. Hence,
\[	S = d^{-\alpha} \frac{\zeta(\alpha,y)x^{\alpha}}{\alpha\sqrt{2\pi \sigma_2}} \bigg( -\left(\frac{\log d}{\alpha \sigma_2}+ \frac{\log d }{2\sigma_2^2}\sigma_3\right) (1+O(\exp(-c\ubar^{1/3}))) + O\left( \frac{\log^2 d}{\sigma_2}+ \frac{|\log d|}{\sqrt{\sigma_2}} \big( \ubar^{-1} +\frac{ \alpha^{-2}}{\sigma_2}\big)\right)\bigg). \]
By \eqref{eq:psiacc}, 
\[	S= d^{-\alpha} \Psi(x,y) \left( -\left(\frac{\log d}{\alpha \sigma_2}+ \frac{\log d }{2\sigma_2^2}\sigma_3\right) (1+O(\ubar^{-1} )) + O\left( \frac{\log^2 d}{\sigma_2}+\frac{|\log d|}{\sqrt{\sigma_2}} \big( \ubar^{-1} + \frac{\alpha^{-2}}{\sigma_2}\big)\right)\right). \]
In summary,
\begin{equation}\label{eq:psixdye''}
	\Psi(x/d,y) = d^{-\alpha}\Psi(x,y) \left(1-\log d\left(\frac{1}{\alpha \sigma_2}+ \frac{\sigma_3}{2\sigma_2^2}\right)(1+O(\ubar^{-1})) +E''\right)
\end{equation}
for
\[	E'' \ll \additive+  \frac{\log^2 d}{\sigma_2} + \frac{|\log d|}{\sqrt{\sigma_2}}\big( \ubar^{-1}+\frac{\alpha^{-2}}{\sigma_2}\big)\ll\additive+  \frac{\log^2 d}{\sigma_2} + \frac{|\log d|}{\ubar\sqrt{\sigma_2}}.\]
All the terms that are multiples of $\log d$ (but not of $\log^2 d$), either in the bound for $E''$ or in \eqref{eq:psixdye''}, contribute $\ll |\log d| / \log x$, which finishes the proof if $y \ge \log x$.
We now assume $y\le \log x$. We have $\alpha \ll y/(\log x \log y)$ and in particular $\alpha \log p= O(1)$ for $p \le y$. It follows that, using the formula for $\sigma_2=\phi_2(\alpha,y)$ given in \eqref{eq:phi_2},
\[ \sigma_2 =  \sum_{p \le y} \frac{(\log p)^2(1+O(\alpha \log p))}{(\alpha \log p +O(\alpha^2 \log^2p))^2} = \frac{\sum_{p \le y} (1+O(\alpha \log p))}{\alpha^2} = \frac{\pi(y) + O(\alpha y)}{\alpha^2} = \pi(y)\frac{1+ O(y/\log x)}{\alpha^2}\]
where we used $\pi(y)\asymp y/\log y$. Similarly, using the formula for $\sigma_3$ given in \cite[Rem.~1]{Saha},
\[ -\sigma_3 = \sum_{p \le y} \frac{(\log p)^3 (p^{\alpha}+p^{2\alpha})}{(p^{\alpha}-1)^3} =2\pi(y)\frac{1+ O(y/\log x)}{\alpha^3}.\]
Hence,
\[ \frac{1}{\alpha\sigma_2} + \frac{\sigma_3}{2\sigma_2^2} = \frac{1}{\sigma_2} ( \alpha^{-1} -\alpha^{-1}(1+O(y/\log x)) ) \ll \frac{y}{(\log x)^2}\]
which allows us to bound the terms in \eqref{eq:psixdye''} except for $E''$. To conclude the proof, we notice that if we use more terms in \eqref{eq:tay}, namely, expand the integrand as
\begin{multline*}
	\frac{\zeta(s,y)x^{s}}{s} = \frac{\zeta(\alpha,y)x^{\alpha}}{\alpha} e^{-\frac{t^2}{2}\sigma_2 }\\
	\cdot \big( \big(1-i\frac{t}{\alpha}-\frac{t^2}{\alpha^2}\big)\big(1-i\frac{t^3}{6}\sigma_3+\frac{t^4}{24}\sigma_4 - \frac{t^6}{2 \cdot 6^2}\sigma_3^2 + \frac{t^8}{2 \cdot 24^2} \sigma_4^2 - i\frac{t^7}{6\cdot 24}\sigma_3 \sigma_4 \big)\\
	+O(|t|^5 |\sigma_5| +|t|^9 |\sigma_3|^3 + t^{12}\sigma_4^3 + |t|^3 \alpha^{-3}  )\big),
\end{multline*}
then our bound on $E''$ improves to
\[ 	E'' \ll \additive+  \frac{\log^2 d}{\sigma_2} + \frac{|\log d| \log y}{y \log x }.\]
\subsection{Proof of Lemma \ref{lem:smallu}}
It is convenient to consider separately $y \le x^{1/4}$ and $y \ge x^{1/4}$. We first study $y \le x^{1/4}$. When $d \le x^{1/4}$, we argue as follows. By \eqref{eq:betaalpha}, it suffices to prove Lemma \ref{lem:smallu} with $\alpha$ replaced by $\beta$, where $\beta$ is defined in \eqref{eq:betadef}. By Saias \cite{Saias} we have, for $u \ge 3$ and $y \ge \exp(\sqrt{\log x} \log \log x)$,
\begin{equation}\label{eq:saias}
	\Psi(x,y) = x\rho(u) \left(1 + \frac{a_1\rho'(u)}{\rho(u)\log y} + O\left(\frac{|\rho''(u)|}{\rho(u)\log^2 y}\right)\right)
\end{equation}
for some constant $a_1$.
Since $\rho'(u)= -\rho(u-1)/u$ and $\rho(u) \asymp \rho(u-1)/(u \log u)$ for $u \ge 3$ \cite[Lem.~1]{Hildebrand1984}, this simplifies as
\begin{equation}\label{eq:psixy}
	\Psi(x,y) = x\rho(u) \left(1 + \frac{a_1\rho'(u)}{\rho(u)\log y} + O\left(\frac{\log^2 (u+1)}{\log^2 y}\right)\right).
\end{equation}
Since $d \le x^{1/4}\le x/y^3$, we may apply \eqref{eq:psixy} with $x/d$ in place of $x$. Let
\[ \delta := \frac{\log d}{\log y} \le u-3.\]
Then
\begin{equation}\label{eq:psixdy} \Psi(x/d,y) = \frac{x}{d}\rho(u-\delta) \left(1 + \frac{a_1\rho'(u-\delta)}{\rho(u-\delta)\log y} + O\left(\frac{\log^2 (u+1)}{\log^2 y}\right)\right).
\end{equation}
Let $r(t):=-\rho'(t)/\rho(t)$. Dividing \eqref{eq:psixdy} by \eqref{eq:psixy}, we find
\[ \Psi(x/d,y) = \frac{\Psi(x,y)}{d} \frac{\rho(u-\delta)}{\rho(u)} \left( 1 + \frac{a_1}{\log y} (r(u)-r(u-\delta)) + O\left( \frac{\log^2 (u+1)}{\log^2 y}\right)\right).\]
It remains to simplify the last equation.
Recall $\xi(u) \asymp \log (u+1)$ and $\xi'(u) \asymp 1/u$ for $u \ge 1$ \cite[Lem.~1]{Hildebrand1984}. In \cite[Lem.~3.7]{DLBT2005}, it was shown that $r(t)=\xi(t)+O(1/t)$ holds for $t>1$ and $r'(t)=\xi'(t)+O(1/t^2)$ holds for $t>2$. It follows that 
\[ r(u)-r(u-\delta) \ll \frac{\delta}{u}\]
by the mean value theorem. The logarithm of the ratio $\rho(u-\delta)/\rho(u)$ can be written as \[\log \rho(u-\delta)-\log \rho(u)= \int_{u-\delta}^{u} r(t)\diff{t} = \int_{u-\delta}^{u} \xi(t)\diff{t} + O(\delta/u)\]
and the integral of $\xi(t)$ is
\[ \int_{u-\delta}^{u} \xi(t)\diff{t} = \delta\xi(u) + \int_{u-\delta}^{u} (\xi(t)-\xi(u))\diff{t} = \delta\xi(u) + \int_{u-\delta}^{u} \Theta((t-u)/u )\diff{t}= \delta\xi(u) -\Theta(\delta^2)/u.\]
Here $\Theta(B)$ indicates a quantity that is in $[cB,CB]$. To conclude, observe $e^{\delta\xi(u)}/d=d^{-\beta}$ and $\exp(-\Theta(\delta^2)/u)=1+O(\min\{1,\delta^2/u\})$. We now treat $x \ge d \ge x^{1/4}$. Due to the error term $\log d /\log x$ in \eqref{eq:state2more2}, it suffices to show $\Psi(x/d,y) \ll \Psi(x,y)d^{-\alpha}$ in our range of parameters. Using $\Psi(x,y)/x \asymp \rho(u)$ for $y \ge \exp(\sqrt{\log x} \log \log x)$ \cite{db}, it suffices to show \[ \rho(u-\delta)/\rho(u)\ll d^{1-\alpha}.\]
Recall $\alpha-1 = -\xi(u)/\log y + O(1/(\log x \log y))$ by \eqref{eq:betaalpha}. The required inequality is a consequence of $\rho(u-\delta)/\rho(u)\ll e^{\delta \xi(u)}$ for $u\ge \delta \ge 0$ \cite[Cor.~2.4]{HT1993} and we are done.

It remains to consider $x \ge y \ge x^{1/4}$. We repeat the last argument but  instead of utilizing \eqref{eq:saias}, we make use of  de Bruijn's estimate \cite{db}
\begin{equation}\label{eq:psidb}
	\Psi(x,y) = x\rho(u) \left(1 + O\left(\frac{1}{\log y} + \frac{\mathbf{1}_{y > x}}{x}\right)\right),
\end{equation}
which holds uniformly for $y \ge x^{1/4}$. Applying \eqref{eq:psidb} once as is, and another time with $x/d$ in place of $x$, we obtain the desired estimate by comparing.
\begin{rem}\label{rem:smallu}
	If $y<x^{1/4}$, the argument above in fact proves Lemma \ref{lem:smallu} with the error term $1/\log x$ replaced by $\log^2(u+1)/\log^2 y$. If $y \ge x^{1/4}$, we claim we can improve the term $1/\log x$ as long as we are not in one of the following two scenarios: 1) $y \in [\sqrt{x/d},\sqrt{x}]$ and $d \sim 1$ and 2) $y \asymp x$. Precisely, let $y \in [x^{1/4},x]$. If $1 \le d \le x/y^2$ or $1 \le d \le x/y\le y$, we claim that
	\begin{equation}\label{eq:psixd}
		\Psi(x/d,y) = \frac{\Psi(x,y)}{d^{\alpha}}\left(1 + O\left( \frac{\log d}{\log x} + \frac{1}{\log^2 x} +\frac{y}{x\log x} \right)\right)
	\end{equation}
	holds. In showing \eqref{eq:psixd}, we may assume $y\le x/3$ and $1 \le d \le 2$, since otherwise it is in Lemma \ref{lem:smallu}. From de Bruijn's estimate for $\Psi(x,y)$ in terms of de Bruijn's approximation $\Lambda(x,y)$ \cite{db}, and the estimate for $\Lambda(x,y)$ given in  \cite[Prop.~A.5]{Gorodetsky}, we find that
	\begin{equation}\label{eq:PsiK} \Psi(x,y) = x \rho(u) K(-r(u)/\log y)\left(1+O\left( \frac{1}{\log^2 x} + \frac{y}{x \log x}\right)\right)
	\end{equation}
	holds uniformly for $y \in [x^{1/4},x]$ where $r(u)=-\rho'(u)/\rho(u)$ and $K(t)= (t+1)\zeta(t+1)/t$. Applying \eqref{eq:PsiK} once as is, and another time with $x/d$ in place of $x$, we claim \eqref{eq:psixd} follows, as we now explain. Let $I=[\log(x/d)/\log y, \log x /\log y]$. The condition $d\le x/y^2$ ensures $I \subset [2,\infty)$, while the condition $1 \le d\le x/y\le y$ ensures $I \subset [1,2]$. Since $r$ is differentiable in $(1,2)$ and in $(2,\infty)$, this allows us to estimate $\log K(-r(u)/\log y)-\log K(-r(\log(x/d)/\log y))$ as $O( \log d/\log y)=O(\log d/\log x)$ by the mean value theorem.
\end{rem}
\subsection{Proof of Lemma \ref{lem:psipsi}}
	Fix $\varepsilon>0$. Let $T_0:=\ubar^{2/3}/\log x$. As in the proof of Lemma \ref{lem:ularge},
	\begin{equation}\label{eq:psiperron5} \Psi(x/d,y)=\frac{1}{2\pi i}\int_{\alpha-iT_0}^{\alpha+iT_0}\zeta(s,y)\frac{(x/d)^s}{s}\diff{s}	+O_{\varepsilon} ( d^{-\alpha}\Psi(x,y) \additive)
	\end{equation}
	for $d \in \{1,a,b,ab\}$ if $ab \le x$. By \cite[Lem.~11]{HildebrandTenenbaum1986} and \eqref{eq:psiacc},
	\[ \int_{\alpha-iT_0}^{\alpha+iT_0} \zeta(s,y)\frac{(x/d)^s}{s}\diff{s}  \ll d^{-\alpha}\Psi(x,y)\]
	for $d \in \{1,a,b,ab\}$ if $ab \le x$. Applying \eqref{eq:psiperron5} with $d=1$ and $d=ab$ and multiplying, we find
	\[ \Psi(x,y)\Psi(x/(ab),y) =  \frac{(ab)^{-\alpha}}{(2\pi i)^2} \int_I\int_I  \zeta(s_1,y)\zeta(s_2,y) \frac{x^{s_1+s_2}}{s_1 s_2} (ab)^{\alpha-s_2}\diff{s_1}\diff{s_2} + O_{\varepsilon}((ab)^{-\alpha} \Psi(x,y)^2(\additive+\additive^2)) \]
	where $I:=\{\alpha+it : |t| \le T_0\}$. Applying \eqref{eq:psiperron5} with $d=a$ and $d=b$ and multiplying, we find
	\[ \Psi(x/a,y)\Psi(x/b,y) = \frac{(ab)^{-\alpha}}{(2\pi i)^2} \int_I\int_I \zeta(s_1,y)\zeta(s_2,y) \frac{x^{s_1+s_2}}{s_1 s_2} a^{\alpha-s_1}b^{\alpha-s_2}\diff{s_1}\diff{s_2} + O_{\varepsilon}( (ab)^{-\alpha}\Psi(x,y)^2(\additive+\additive^2) ).\]
	We subtract the last two relations to find
	\begin{multline}\label{eq:nonsym}
		\Psi(x,y)\Psi(x/(ab),y) - \Psi(x/a,y)\Psi(x/b,y) \\
		= (ab)^{-\alpha} \left( \frac{1}{(2\pi i)^2} \int_I \int_I \zeta(s_1,y)\zeta(s_2,y) \frac{x^{s_1+s_2}}{s_1 s_2} ((ab)^{\alpha-s_2}-a^{\alpha-s_1}b^{\alpha-s_2})\diff{s_1}\diff{s_2} + O_{\varepsilon}( \Psi(x,y)^2(\additive+\additive^2)) \right).
	\end{multline}
	We symmetrize \eqref{eq:nonsym} by exchanging the roles of $s_1$ and $s_2$ there, and adding back to the original form of \eqref{eq:nonsym}, obtaining
	\begin{multline}\label{eq:sym}
		\Psi(x,y)\Psi(x/(ab),y) - \Psi(x/a,y)\Psi(x/b,y) \\
		= \frac{(ab)^{-\alpha}}{2} \bigg( \frac{1}{(2\pi i)^2} \int_I \int_I \zeta(s_1,y)\zeta(s_2,y) \frac{x^{s_1+s_2}}{s_1 s_2} (a^{\alpha-s_1}-a^{\alpha-s_2})(b^{\alpha-s_1}-b^{\alpha-s_2})\diff{s_1}\diff{s_2} \\
		+ O_{\varepsilon}( \Psi(x,y)^2(\additive +\additive^2)) \bigg).
	\end{multline}
	We use the Taylor approximations \eqref{eq:tay} and \eqref{eq:moretay} to write the last integral as
	\begin{align*}
		\frac{1}{(2\pi i)^2} &\int_I \int_I \zeta(s_1,y)\zeta(s_2,y) \frac{x^{s_1+s_2}}{s_1 s_2} (a^{\alpha-s_1}-a^{\alpha-s_2})(b^{\alpha-s_1}-b^{\alpha-s_2})\diff{s_1}\diff{s_2} \\
		&=-\log a \log b\frac{\zeta(\alpha,y)^2 x^{2\alpha}}{(2\pi)^2\alpha^2} \left( \int_I \int_I e^{-\frac{t_1^2+t_2^2}{2} \sigma_2} (t_1-t_2)^2  \diff{t_1}\diff{t_2} + O(\sigma_2^{-2} E)\right)
	\end{align*}
	for
	\[ E =  \frac{\log a\log b}{\sigma_2} + \frac{\log (ab)}{\sigma_2^{1/2}} + \ubar^{-1}\]
	where we used the fact that $\int_{-A}^{A} e^{-v^2/2}v^k \diff{v}=0$ for odd $k$. For the remaining integral,
	\begin{align*}
		\int_I\int_I e^{-\frac{t_1^2+t_2^2}{2} \sigma_2} (t_1-t_2)^2 \diff{t_1}\diff{t_2}&=\sigma_2^{-2}  \int_{-T_0\sqrt{\sigma_2}}^{T_0\sqrt{\sigma_2}}
		\int_{-T_0\sqrt{\sigma_2}}^{T_0\sqrt{\sigma_2}}e^{-\frac{v_1^2+v_2^2}{2}} (v_1-v_2)^2  \diff{v_1}\diff{v_2}\\
		&=2\sigma_2^{-2} \int_{-T_0\sqrt{\sigma_2}}^{T_0\sqrt{\sigma_2}}e^{-\frac{v_1^2}{2}}v_1^2  \diff{v_1} \int_{-T_0\sqrt{\sigma_2}}^{T_0\sqrt{\sigma_2}}e^{-\frac{v_2^2}{2}}  \diff{v_2}\\
		&=2\sigma_2^{-2}  (   \sqrt{2\pi} + O(e^{-c\ubar^{2/3} }))^2
	\end{align*}
	where in the last equality, we used \eqref{eq:intest}. To conclude, we divide \eqref{eq:sym} by $\Psi(x,y)^2$ and appeal to \eqref{eq:psiacc}.

\bibliographystyle{abbrv}
\bibliography{references}
\end{document}